\newtheorem{theorem}{Theorem}[section]
\newtheorem{proposition}[theorem]{Proposition}
\newtheorem{lemma}[theorem]{Lemma}
\newtheorem*{remark-Non}{Remark}
\newtheorem{example}[theorem]{Example}
\definecolor{tuklblue}{RGB}{0,95,140}
\newcommand{\R}{\mathbb{R}}
\newcommand{\Z}{\mathbb{Z}}
\newcommand{\tT}{\mathrm{T}}
\newcommandx{\abs}[2][1=\@empty]{#1\lvert #2 #1\rvert}
\newcommandx{\norm}[3][1=\@empty,3=\@empty]{#1\lVert #2 #1\rVert_{#3}}
\DeclareMathOperator*{\argmin}{arg\,min}
\DeclareMathOperator{\prox}{prox}
\DeclareMathOperator{\sgn}{sgn}
\title{A Second Order TV-type Approach for 
Inpainting and Denoising Higher Dimensional Combined Cyclic and Vector Space Data}
\author{Ronny Bergmann\thanks{Departement of Mathematics,
Technische Universit\"at Kaiserslautern,
Paul-Ehrlich-Str. 31, 67663 Kaiserslautern, Germany
\newline bergmann@mathematik.uni-kl.de.}
\and
Andreas Weinmann\thanks{Department of Mathematics,
Technische Universit\"at M\"unchen and
Fast Algorithms for Biomedical Imaging Group, Helmholtz-Zentrum M\"unchen,
Ingolst\"adter Landstr. 1, 85764 Neuherberg, Germany
\newline andreas.weinmann@tum.de}%
}
\date{\today}
\begin{document}
     \maketitle 
     \begin{abstract}\textbf{Abstract.}
	In this paper we consider denoising and inpainting problems for higher
	dimensional combined cyclic and linear space valued data.
	These kind of data appear when dealing with nonlinear color spaces such as HSV, and
	they can be obtained by changing the space domain of, e.g., an optical flow
	field to polar coordinates.
	For such nonlinear data spaces, we develop algorithms for the solution of the
	corresponding second order total variation (TV) type problems for denoising,
	inpainting as well as the combination of both. We provide a convergence
	analysis and we apply the algorithms to concrete problems.
     \end{abstract}
\paragraph{Keywords.}
Higher order total variation minimization, vector-valued TV, cyclic data, combined denoising and inpainting, cyclic proximal point algorithm.
%
%
\section{Introduction}
\label{sec:Introduction}
%
%
One of the most well known methods for edge-preserving image denoising is the
variational approach minimizing the Rudin-Osher-Fatemi (ROF)
functional~\cite{ROF92}.
In its basic form, it deals with scalar data.
Related variational approaches for vector space valued data have gained a lot
of interest in the literature and are still topic of ongoing research;
we exemplarily refer
to~\cite{BC98,raket2011tv,goldluecke2012natural,MSMC14:LowRank} and the
references therein. In this paper, we consider TV-type functionals
incorporating first and second order differences
for the nonlinear data spaces which combine vector space valued data ---in the
following called linear space data to avoid confusion--- and vectors of cyclic
data. In these spaces, we deal with denoising and inpainting problems as well
as simultaneous inpainting and denoising problems.

Image inpainting is a problem arising in many applications in image processing,
image analysis and related fields.
Examples are restoring scratches in photographs, removal of superimposed
objects, dealing with an area removed by a user, digital zooming as well as
edge decoding.
Principally, any missing data situation ---whatever the reason might be---
results in an inpainting problem.
This is not restricted to 2D images.
Further examples are defects in audio and video recordings, or in seismic data
processing.
In this respect, also interpolation, approximation, and extrapolation problems
may be viewed as inpainting problems.
We recommend the survey~\cite{guillemot2014image} and Chapter 6
of~\cite{chan2005image} as well as~\cite{BBCS10,CDOS12} for an overview on
inpainting and for further applications. 
There are various conceptionally different approaches to inpainting,
cf.~\cite{chan2005image,guillemot2014image} and~\cite{CDOS12} which also
includes some comparison.
Among these are methods based on linear transforms from harmonic analysis such
as curvelets and shearlets which are combined with a sparsity approach
based on $\ell^1$ minimization on the corresponding coefficients%
~\cite{cai2010simultaneous,elad2005simultaneous,dong2012wavelet,king2014analysis}.
Other approaches are based on (often nonlinear) PDE and variational models,
cf., e.g.,%
~\cite{bornemann2007fast,caselles1998axiomatic,chan2006error,chan2002euler,esedoglu2002digital,marz2013well,marz2011image,masnou2002disocclusion,masnou1998level,shen2002mathematical,tschumperle2006fast}.
\\In general, exemplar-based and sparsity-based methods perform better for
filling large texture areas,
whereas diffusion-based and variational techniques yield better results for
natural images.
Among the variational techniques applied, total variation (TV) minimization is
one of the prominent models. The minimizer of the corresponding TV functional yields the inpainted
image.
TV inpainting works well for elongated inpainting areas but has problems with
larger gap connections. In such situations higher order, in particular curvature based, schemes perform better.

The first TV regularized model was proposed in~\cite{ROF92} for denoising. 
It was first applied to missing data situations/inpainting in~\cite{BBCSV01,CS01}.
Further references for TV based image inpainting  are~\cite{bresson2008fast,chan2005simultaneous,shen2002mathematical}.
In contrast to classical methods, the results are typically not over-smoothed;  
however, it is well known that these minimizers very often show `staircasing'
effects, i.e.\ the result is often piecewise constant, although the underlying
signal varies smoothly in the corresponding regions.
In order to avoid staircasing, higher order and, in particular, 
second order differences and derivatives (in a continuous domain setting),
are often employed. References are the pioneering work~\cite{CL97} as well as%
~\cite{BKP09,CEP07,CMM00,DSS09,DWB09,HS06,LBU2012,LLT03,LT06,Sche98,SS08,SST11}.
TV functionals for linear space valued data were considered in \cite{BC98} 
in the context of linear color spaces. 
The papers \cite{PS14,PSS13} deal with denoising and inpainting in the RGB color space
using linear combinations of first and second order terms.
A total generalized variational model can be found, e.g., in~\cite{MSMC14:LowRank}. 
In contrast to applying pure second order terms or linear combinations of first and second order
terms, total generalized variational approaches try to find some optimal balancing between the first and second derivatives.   
The advantage is that the related schemes better preserve the edge structures.
A detailed description may be found in \cite{BKP09}.
The authors of \cite{MSMC14:LowRank} obtain a model for
denoising linear space valued color data.
Second order total generalized variation was generalized for tensor fields in \cite{VBK13}.

However, in many applications,
data having values in nonlinear spaces appear.
Examples are diffusion tensor images~\cite{basser1994mr,pennec2006riemannian},
color images based on non-flat color models%
~\cite{chan2001total,kimmel2002orientation,lai2014splitting,vese2002numerical}
or motion group-valued data \cite{rosman2012group,rahman2005multiscale}.
Due to its importance, processing such manifold valued data has gained a lot of
interest in recent years.
To mention only some examples, wavelet-type multi scale transforms
for manifold data have been considered in%
~\cite{GW09,rahman2005multiscale,weinmann2012interpolatory}.
Statistical issues on Riemannian manifolds are the topic of%
~\cite{bhattacharya2003large,bhattacharya2005large,Fle13,oller1995intrinsic,Pen06}
and circular data are, in particular, considered in~\cite{fisher95,JS2001}.
Furthermore, manifold-valued partial differential equations are studied in~\cite{chefd2004regularizing,GHS13,tschumperle2001diffusion}.

For TV functionals for manifold-valued data, an analysis  
from a theoretical viewpoint has been carried out in~\cite{GM06,GM07}.
These papers extend previous work~\cite{GMS93} on $\mathbb S^1$-valued 
functions where, in particular, the existence of minimizers of certain TV-type energies is shown. 
An algorithm for TV minimization on Riemannian manifolds was proposed in%
~\cite{LSKC13}.
This approach uses a reformulation as a multi label optimization problem with an
infinite number of labels and a subsequent convex relaxation.
An approach for linear spaces using a relaxation of the label optimization
problem as well was presented in~\cite{GSC13}.
First order TV minimization for $\mathbb S^1$-valued data has been considered
in~\cite{SC11,CS13}.
In particular, these authors consider inpainting for $\mathbb S^1$-valued data in a
first order TV setup.
We proposed a different approach to first order TV minimization for manifold-valued data via
cyclic and parallel proximal point algorithms in~\cite{WDS2013}.
We established a second order setup for
denoising $\mathbb S^1$-valued data based on cyclic proximal point algorithms
in~\cite{bergmann2014second}. Inpainting for $\mathbb S^1$-valued data was considered in 
the authors' conference proceeding~\cite{BW15}.

Data consisting of combined cyclic and linear space components appear in various contexts.
For example, such data appear when dealing with nonlinear color spaces such as HSV, HSL, HSI or HCL.
Such data also appear in the context of optical flows. 
When considering the flow vectors between consecutive images in polar coordinates which means separating magnitude and direction, the resulting data takes its values in \(\mathbb R\times \mathbb S^1\). In this context, this approach is natural and interesting and seems promising to improve the results obtained with the usual $\mathbb R^2$-valued approach. In principle, whenever data is given as vectors of polar
coordinates, we are in the combined cyclic and real-valued setup of data in $(\mathbb S^1)^m \times \mathbb R^m$ considered in this paper.

\paragraph{Contributions.}\par
In this paper, we consider inpainting, denoising as well as combined inpainting and denoising problems for combined cyclic and linear space valued images in $(\mathbb S^1)^m \times \mathbb R^n$ based on a second order TV-type formulation.
We consider two variational models:
the first model deals with the noise free situation whereas the second one also considers the noisy case combining denoising and inpainting (including pure denoising by specifying the inpainting area as the empty set). 
In our nonlinear setting, these higher order approaches avoid unwanted staircasing effects as well.
For combined cyclic and linear space data, 
we derive solvers for these variational problems based on cyclic proximal point algorithms.
We provide a convergence analysis for both the noisy and the noise free model
based algorithms developed in this paper. 
For both algorithms, we show the convergence to a minimizer under certain restrictions which are 
typical when dealing with nonlinear data.
	In particular, we assume the data to be dense enough meaning that they are locally (and not necessarily globally) nearby.
We apply our algorithms to denoising, inpainting and combined denoising and inpainting 
in the nonlinear HSV color space.
Furthermore, we apply our algorithms for denoising frames in volumetric phase-valued data -- in our case, frames of a 2D film. Our approach is based on utilizing the neighboring $k$ frames to incorporate the temporal neighborhood. 
The idea generalizes to arbitrary data spaces and volumes consisting of layers of 2D data.

The novelties of the present work in relation to the authors previous work \cite{bergmann2014second,BW15}
are as follows. 
(i) In contrast to \cite{bergmann2014second,BW15} we consider the more general, practically relevant, data spaces $(\mathbb S^1)^m \times \mathbb R^n$ here. In contrast to general manifolds, these product spaces still bear enough structure relevant for our purposes. We point out that both, the algorithmic and analysis part, 
are more involved than only component-wise considering the $\mathbb S^1$-valued or real-valued situation.
(ii) Concerning the algorithmic part, we compute proximal mappings of constrained problems arising
in inpainting situations in this work. This is even new for $\mathbb S^1$ data. In~\cite{BW15}, we used a less natural  projection approach 
generalizing~\cite{bergmann2014second}.
(iii) Concerning the analysis, we here include an inpainting setup. The conference proceeding~\cite{BW15} does not contain an analysis
and~\cite{bergmann2014second} considers functionals for denoising $\mathbb S^1$ valued data.
A more detailed discussion may be found at the end of the paper.

\paragraph{Outline of the paper.}\par 
In Section~\ref{sec:Models} we introduce the variational models 
we consider for inpainting and denoising of combined cyclic and linear space
data in this paper.
We start with vector space data in
Subsection~\ref{subsec:ModelVectorSpace}; then we define 
absolute differences for combined cyclic and vector space data
in Subsection~\ref{subsec:DefAbsDifferences} which allow us to derive
the corresponding variational models for combined cyclic and vector space data.
This is done in Subsection~\ref{subsec:InpaintingCombined}
for both inpainting noise free combined cyclic and vector space data
as well as inpainting and denoising combined cyclic and vector space data.
In Section~\ref{sec:Algorithms} we develop algorithms for minimizing the variational models introduced previously.
These algorithms base on the cyclic proximal point algorithm we present in Subsection~\ref{subsec:cppaGeneralExplanation}.
We present explicit formulas for the proximal mappings needed for inpainting
in Subsection~\ref{subsec:proxiesNeeded4Inpainting}. Using these explicit representations,
we derive a cyclic proximal point algorithm for inpainting both the noisy and noise free combined cyclic and vector space data in
Subsection~\ref{subsec:InpaintDenoiseCPPA}.
The convergence analysis of both algorithms is the topic of Section~\ref{subsec:Convergence}.
Finally, in Section~\ref{sec:Applications} we apply the derived algorithms to various concrete situations. We consider denoising data living in the nonlinear HSV color space in Subsection~\ref{subsec:NLcolorspaceAppl}.
Then we consider inpainting for noise-free as well as noisy data in such color spaces in Subsection~\ref{subsec:Inpainting}.
Finally, we apply our algorithms for denoising frames of a $\mathbb{S}^1$-valued 2D film in Subsection~\ref{sec:denoisingVolumes}.
\section{Second order variational models for inpainting and denoising combined cyclic and linear space data}
\label{sec:Models}
%
%
%
In this section we derive models for denoising, inpainting as well as simultaneous inpainting and denoising data having cyclic and linear space components.
In Subsection~\ref{subsec:ModelVectorSpace}, we first concentrate on introducing the considered models based on first and second order absolute finite differences restricting
to the linear space setting. In
Subsection~\ref{subsec:DefAbsDifferences} we obtain suitable definitions
for absolute differences for combined cyclic and vector space data.
In Subsection~\ref{subsec:InpaintingCombined}, we use these definitions
to obtain inpainting and simultaneous inpainting and denoising models for
combined cyclic and linear data. In particular, denoising is covered by
considering the empty set as inpainting region.

Let us fix some notations. We denote by
\((f_{i,j})_{(i,j)\in\Omega}
\),
\(\Omega_0 \coloneqq \{1,\ldots,N\}\times\{1,\ldots,M\}\) images of size
\(N\) by \(M\), which can also be seen as functions \(f(i,j)\) on the image
domain \(\Omega_0\). For any subset \(\Omega\subset\Omega_0\) of pixel indices
\(\Omega^C\coloneqq \Omega_0\backslash\Omega\) denotes the complement of
\(\Omega\) in \(\Omega_0\).
Each entry \(f_{i,j}\in\mathcal X\) is called a pixel
value, where \(\mathcal X\) is some data space.
Our main focus is the \((m+n)\)-dimensional space
\(\mathcal X \coloneqq (\mathbb S^1)^m\times\mathbb R^n\). On a data space
\(d_{\mathcal X}(x,y),D_1(x,y),D_2(x,y,z)\), and \(D_{1,1}(w,x,y,z)\) denote the
distance on~\(\mathcal X\), the absolute first, absolute second, and absolute
second order mixed differences. If the space is clear from the context or the
setting holds for all spaces, we omit the \(\mathcal X\).
The elements \(x = (x_1,\ldots,x_n)^\tT\in\mathbb R^n\) are column vectors, where \(\cdot^\tT\) denotes
the transposition. For two vectors \(x,y\in\mathbb R^n\)
we denote by \(\langle x,y\rangle \coloneqq x^\tT y\) the inner product. By
\((x)_{2\pi} \in [-\pi,\pi)\) we denote the elementwise modulo operation,
i.e.\ the unique value such that \(x = 2\pi k + (x)_{2\pi}\), \(k\in\mathbb Z^n\).
Finally, for matrices \(x = (x_{i,j})_{i=1,j=1}^{n,d}\in\mathbb R^{n\times d}\) we
denote the columns by \(x^{(j)}\).

\subsection{Inpainting and denoising vector space data}
\label{subsec:ModelVectorSpace}

The Rudin-Osher-Fatemi (ROF) functional~\cite{ROF92}
\[
\sum_{i,j} ( f_{i,j} - x_{i,j} )^2
	+ \alpha \sum_{i,j}  \norm{\nabla x_{i,j}}%
	, \quad \alpha > 0,
\]
is one of the most well known and most popular functionals in variational
image processing. In its penalized form, it consists of two terms: the first
term  measures the distance to the data $f$
the second term is a TV regularizer
where $\nabla$ denotes the discrete gradient operator, usually implemented as
first order forward differences in vertical and horizontal directions.
Both an isotropic version using the euclidean length or \(2\)-norm and an anisotropic version employing the sum of absolute values or \(1\)-norm for the second term are widely used. In this work we will restrict ourselves to the anisotropic version.
In this form, the ROF Model is typically used for denoising purposes.
To avoid the appearing staircasing effect, often higher order and, in
particular, second order differences (respectively, derivatives, in a continuous domain setting) are employed \cite{CL97,BKP09,CEP07,CMM00,DSS09,DWB09,HS06,LBU2012,LLT03,LT06,Sche98,SS08,SST11}.

\paragraph{Denoising.}\par
For pure denoising we consider the discrete second order TV-type functional
\begin{equation}\label{eq:2DTVfunctional}
\begin{split}
	J(x) &=
F(x; f)
+ \alpha \operatorname{TV}_1(x)
\\&\qquad
+ \beta \operatorname{TV}_2(x)
+ \gamma \operatorname{TV}_{1,1}(x),
\end{split}
\end{equation}
where \(x,f\) are images defined on the image domain\(\Omega_0\) denotes the image domain. The data values \(f_{i,j}\) itself live in a certain data space. Then the data term $F(x;f)$ for given data $f$ reads
\begin{align}
F(x;f)
&=
\frac{1}{2}
\sum_{i,j=1}^{N,M}
d(f_{i,j},x_{i,j})^2,\label{eq:2DFdist}
\end{align}
where $d$ is a distance on the data space.
For data living in a vector space, $d(f_{i,j},x_{i,j}) = \norm{f_{i,j}-x_{i,j}}$ is an appropriate choice.
In the pure linear space data situation, the above quadratic data term \eqref{eq:2DFdist} 
	corresponds to a Gaussian noise model. For other types of noise, different data terms are more appropriate; e.g., for Laplacian noise the term $F(x;f)=\sum_{i,j=1}^{N,M}d(f_{i,j},x_{i,j})$ is appropriate; cf. also \cite{PS14}. 

The first order difference component $\alpha\operatorname{TV}_1(x)$ is given by
\begin{equation}\label{eq:DefFirstDiff4VectorData}
	\begin{split}
		\alpha\operatorname{TV}_1(x)
		&=
		\alpha_1\sum_{i,j=1}^{N-1,M} D_1(x_{i,j},x_{i+1,j})
		\\
		&\qquad
		+
		\alpha_2\sum_{i,j=1}^{N,M-1} D_1(x_{i,j},x_{i,j+1})
		\\
		&\qquad
		+
		\frac{\alpha_3}{\sqrt{2}}\sum_{i,j=1}^{N-1,M-1} D_1(x_{i,j},x_{i+1,j+1})
		\\
		&\qquad
		+
		\frac{\alpha_4}{\sqrt{2}}\sum_{i,j=1}^{N-1,M-1} D_1(x_{i,j+1},x_{i+1,j}).
	\end{split}
\end{equation}
Again, for a vector space any norm of the ordinary absolute first order difference
$D_1(x_{i,j},x_{i+1,j}) = \norm{x_{i,j}-x_{i+1,j}}$ is an appropriate choice. The first order TV term incorporates horizontal, vertical and both diagonal differences.
The diagonals are incorporated to reduce unwanted anisotropy effects and are scaled by \(\frac{1}{\sqrt{2}}\) to take the length of the diagonal on the pixel grid, i.e. the distance of two pixels, into account.
We note that
$J'(x) = F(x; f) + (\alpha_1,\alpha_2,0,0)\operatorname{TV}_1(x)$
is just the vector version of the anisotropic discrete
ROF functional above.
Using the notation
\begin{align}\label{eq:DefSecondDiff4VectorData}
D_2(x,y,z) &= \lVert x-2y+z\rVert\\
\intertext{and}
D_{1,1}(x,y,u,v) &= \lVert x-y-u+v\rVert,\nonumber
\end{align}
for a norm of the standard second order differences for vector space data,
the second order difference component,
consisting of a horizontal and vertical component $\beta\operatorname{TV}_2(x)$
as well as a diagonal component $\gamma \operatorname{TV}_{1,1}(x),$ is given by
\begin{align}
	\phantom{\gamma \operatorname{TV}_{1,1}(x)}
	&\begin{aligned}
	\mathllap{\beta\operatorname{TV}_2(x)}
		&=
		\beta_1\!\sum_{i=2,j=1}^{N-1,M}\!\!
				D_2(x_{i-1,j},x_{i,j},x_{i+1,j})
		\\&\qquad+
			\beta_2\!\sum_{i=1,j=2}^{N,M-1}\!\!
				D_2(x_{i,j-1},x_{i,j},x_{i,j+1}),
		\label{eq:TV2iso}
	\end{aligned}\\
	&\begin{aligned}
		\mathllap{\gamma\operatorname{TV}_{1,1}(x)}
		&=
		\gamma\!\!\!\!\sum_{i,j=1}^{N-1,M-1}\!\!\!\!
			D_{1,1}(x_{i,j},x_{i+1,j},x_{i,j+1},x_{i+1,j+1}).
			\label{eq:TV2mix}
	\end{aligned}
\end{align}
We note that similar to the diagonal differences in the $\operatorname{TV}_1$
part the diagonal component $\gamma \operatorname{TV}_{1,1}(x)$, which is based
on the  mixed second order difference $D_{1,1}$, reduces unwanted anisotropy
effects for the second order part.
Actually, $D_{1,1}(x_{i,j},x_{i+1,j},x_{i,j+1},x_{i+1,j+1})$ may be interpreted
as a diagonal difference:
averaging $x_{i,j+1}$ and $x_{i+1,j}$ yields an estimate for the non-grid value
$m=``x_{i+1/2,j+1/2}$''. 
Then $D_2(x_{i,j}, m, x_{i+1,j+1})
=
D_{1,1}(x_{i,j},\\
 x_{i+1,j}, x_{i,j+1}, x_{i+1,j+1})$
plays the role of a second order diagonal difference. Interchanging the roles
of $x_{i,j+1},\\x_{i+1,j}$ and  $x_{i,j},x_{i+1,j+1}$ yields
the other diagonal. Hence, diagonal differences are already incorporated in the
second order term and we do not have to consider terms of the
form $D_2(x_{i-1,j-1}, x_{i+1,j+1}, x_{i,j})$
for the reduction of an\-iso\-tropy effects.
The model parameters $\alpha_1,\alpha_2,\alpha_3,\alpha_4,\\\beta_1,\beta_2,\gamma$ regulate the influence of the different TV terms.

One main reason for considering this anisotropic model is that it is computationally feasible 
 for the nonlinear space of combined cyclic and vector space data considered in this paper
 as we will see later on. To our knowledge there are no previous algorithms dealing with any kind of second order TV-like problems in this nonlinear situation ---neither in the isotropic nor in the anisotropic setup. As explained, we employ diagonal terms 
 to milden unwanted effects caused by the anisotropic formulation.

Next, we consider suitable modifications of the above functional to obtain models for the inpainting problem with noisy and noiseless data.
We start by first formulating the inpainting problem.

\paragraph{Inpainting problem in the presence of noise.}\par
Given an image domain \(\Omega_0 = \{1,\ldots,N\} \times \{1,\ldots,M\},\) an inpainting
region $\Omega\subset\Omega_0$ is a subset of the image domain \(\Omega_0\),
where the pixel values \(f_{i,j}\), \((i,j) \in \Omega\), are lost.
The noiseless or noisy inpainting problems now consist of finding
a function $x$ defined on \(\Omega_0\) from data $f$ given on the complement $\Omega^C$ of the inpainting region, such that $x$ is a suitable extension to~$f$ onto~$\Omega$ and for the second case additionally denoised.

\paragraph{Inpainting without presence of noise.}\par
To deal with the noiseless situation, we consider the following modification of the functional $J$ given by~\eqref{eq:2DTVfunctional}.
Since the data is assumed to be noiseless, we add the constraint that the target variable agrees with the data on the complement of the inpainting region. Furthermore, the data term considers only those indices for which actually data are available. This eliminates the data term from the functional.
More precisely, the second order variational inpainting problem considered in this paper reads for a vector space as
\begin{equation}\label{eq:2DTVfunctionalInpainting}
\begin{split}
	\argmin_{x
	}
	&\
	\alpha \operatorname{TV}_{1}(x)
	+ \beta \operatorname{TV}_{2}(x)
	+ \gamma \operatorname{TV}_{1,1}(x)%
	,
	\\&
	\text{ subject to }
	x_{i,j}=f_{i,j}\text{ for all }(i,j)\in \Omega^C
.
\end{split}
\end{equation}
The TV terms $\operatorname{TV}_{1},$ $\operatorname{TV}_{2},$
$\operatorname{TV}_{1,1}$ are defined by~\eqref{eq:DefFirstDiff4VectorData}, \eqref{eq:TV2iso} and~\eqref{eq:TV2mix}
using the difference terms \(D_1,D_2,D_{1,1}\) based on a norm in the vector space.
Due to the constraint they actually only act on those difference terms
that affect an entry in the inpainting region.

\paragraph{Second order TV formulation of the inpainting problem for noisy data.}\par
For the inpainting problem in presence of noise the requirement of equality on $\Omega^C$ is replaced by $x$ being a suitable, i.e., smooth approximation. In this case, we search for a minimizer of
the following second order TV functional for cyclic data for inpainting:
\begin{equation}\label{eq:2DTVfunctionalInpaintingWithNoise}
\begin{split}
J_{\Omega}(x) &=
F_{\Omega^C}(x; f)
+ \alpha \operatorname{TV}_{1}(x)
\\&\qquad+ \beta \operatorname{TV}_{2}(x)
+ \gamma \operatorname{TV}_{1,1}(x),
\end{split}
\end{equation}
where for any subset \(B\subset\Omega_0\) of the image domain, we define
\begin{equation}\label{eq:DataTerm4Inpainting}
F_{B}(x; f) \coloneqq \frac{1}{2}\sum\nolimits_{(i,j)\in B} d(x_{i,j},f_{i,j})^2.
\end{equation}
This means we use a data term that enforces similarity to the
given data \(f\) on \(\Omega^C\) while applying a regularization based
on~\eqref{eq:DefFirstDiff4VectorData},\eqref{eq:TV2iso}, and~\eqref{eq:TV2mix} for the whole image domain.
Specifying the inpainting area as the empty set, we obtain the pure denoising problem~\eqref{eq:2DTVfunctional}.

\subsection{Absolute differences for combined cyclic and vector space data}
\label{subsec:DefAbsDifferences}

In order to implement the above variational inpainting
problem~\eqref{eq:2DTVfunctionalInpainting} and the simultaneous inpainting and
denoising problem~\eqref{eq:2DTVfunctionalInpaintingWithNoise} for combined
cyclic and vector space data, we have to find suitable difference
operators $D_1, D_2, D_{1,1}$ for data consisting of combined cyclic and linear space components.
In order to do so, we first find suitable definitions for vectors of cyclic data.
Then, we combine these definitions with those for the linear space case in a way suitable to
the space of interest in this paper.
We use the symbols
\(D_\bullet\)
already introduced in~\eqref{eq:DefFirstDiff4VectorData}
for the linear space case, also for the case of vectors of cyclic and combined data. We further unify the notation to \(D(\cdot\,;\,w)\) for different weights \(w\).
This overload is employed to avoid additional notation and should not cause confusion since the space under consideration will be clear from the context.

Let \(w = (w_j)_{j=1}^d\in\mathbb R^d\), \(w \neq 0\), be a vector with
\(\sum_{j=1}^d w_j = 0,\) and call such a vector \(w\) a \emph{weight}.
Special cases are the \emph{binomial coefficients with alternating signs}
\[
\textstyle b_d = \left( (-1)^{j+d-1} { d\choose {j-1}} \right)_{j=1}^{d+1}.
\]
For the vectors \(x^{(j)}\in\mathbb R^n\), \(j=1,\ldots,d\), we employ the
matrix~\(x \coloneqq \bigl(x^{(1)},\ldots,x^{(d)}\bigl)\in\mathbb R^{n\times d}\) to define the \emph{absolute finite difference} \(D\) for these 
vectors
with respect to the weight~\(w\in\mathbb R^d\) by
\[
D(x;w)
= \norm{xw}
= \norm[\Big]{\sum_{j=1}^{d} w_jx^{(j)}}.
\]
For \(w=b_d\), we obtain the forward differences of order $d$ for the
vectors \(x^{(1)},\ldots,x^{(d+1)}\), i.e.\ \[
D_d(x)
\coloneqq D(x;b_d)
= \norm[\Bigl]{\sum_{j=1}^{d+1} (-1)^{j+d-1} { \textstyle{ d\choose {j-1}}} x^{(j)}}.
\]
Another useful weight used in this paper is \(w=b_{1,1} \coloneqq (-1,1,1,-1)\).
We denote the corresponding finite absolute difference by \(D_{1,1}(x) \coloneqq D(x;b_{1,1})\).
\begin{example}\label{ex:diff2-2d}
	For three points \(x_1,x_2,x_3\in\mathbb R^n\) and \(w=b_2 = (1,-2,1)^\tT\)
	the second order absolute difference is given
	by~\(D_2(x_1,x_2,x_3) = \norm{x_1-2x_2+x_3}\), cf.~\eqref{eq:DefSecondDiff4VectorData}.
	This can be interpreted as measuring the distance from~\(y\) to the
	midpoint~\(m_x \coloneqq \frac{1}{2}(x_1+x_3)\)
	of the line segment connecting~\(x_1\) and~\(x_3\); more precisely, we have
	\(D_2(x_1,x_2,x_3) = 2\norm{\tfrac{1}{2}(x_1+x_3)-x_2}\). The situation is shown in Figure~\ref{fig:2Ddiff2-base} for~\(n=2\), but also
	illustrates the situation for general \(n>2\) in the plane defined
	by~\(x_1,x_2,x_3\). For \(n=1\) the situation simplifies to \(x_2\) always lying on
	the line ---though not necessarily the segment--- connecting \(x_1\) and \(x_3\).
\end{example}
\begin{figure}[tbp]%
	\centering
	\includegraphics{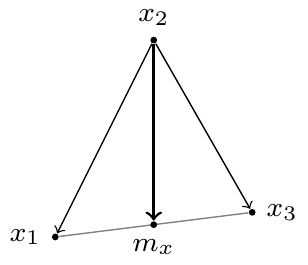}
	\caption[]{The three points \(x_1,x_2,x_3\in\mathbb R^2\) illustrate the
		multivariate finite difference \(D_2(x_1,x_2,x_3) = 2\norm{m_x-x_2}\), i.e. the distance of \(x_2\) to the
		midpoint~\(m_x \coloneqq \tfrac{1}{2}(x_1+x_3)
		\) of \(x_1\) and \(x_3\). This measures how “near” they are to lying equally distributed on a line segment in the right order.}
	\label{fig:2Ddiff2-base}
\end{figure}

We consider the cyclic case next. Let~\(\mathbb S^1\) denote the unit circle
\(\mathbb S^1 \coloneqq \{p_1^2 + p_2^2 = 1: p = (p_1,p_2)^\tT \in \mathbb R^2\}\)
endowed with the \emph{geodesic} or \emph{arc length distance}
\[
d_{\mathbb S^1} (p,q) = \arccos\,\langle p,q \rangle, \quad p,q\in\mathbb S^1.
\]
Given a base point $q \in \mathbb S^1$, the
\emph{exponential map}~$\exp_q\colon\\\mathbb R \rightarrow \mathbb S^1$ from the tangent
space~$T_q\mathbb S^1 \simeq \R$ of $\mathbb S^1$ at $q$ onto $\mathbb S^1$ is
defined by
\[
\exp_q(x) = R_x q, \qquad R_x := \begin{pmatrix}
	\cos x & -\sin x\\
	\sin x & \cos x
\end{pmatrix}.
\]
This map is $2\pi$-periodic, i.e.,
$\exp_q(x) = \exp_q((x)_{2\pi})$ for any $x \in \R$,
where $(x)_{2\pi}$ is the unique point such that
\begin{equation}\label{eq:DefRoundBracket}
x = 2\pi k + (x)_{2\pi} \quad \text{with} \quad (x)_{2\pi}  \in [-\pi,\pi), k\in \mathbb Z.
\end{equation}
If we fix \(q\), we obtain a
\emph{representation system} of \(\mathbb S^1\), i.e., \(\exp_q\) is a
bijective map where \(\exp_q(0) = q\) and there is a unique~\(x\in[-\pi,\pi)\)
for each \(p\in\mathbb S^1\) such that \(\exp_q(x) = p\).
A vector \(x\in [-\pi,\pi)^m\) represents a point in \((\mathbb S^1)^m\)
by component-wise application, and for a point \(q\in(\mathbb S^1)^m\), the map
\(
\exp_q\colon[-\pi,\pi)^m\rightarrow (\mathbb S^1)^m
\),
\(
\exp_{q}(x) \coloneqq (\exp_{q_1}(x_1),\ldots,\exp_{q_m}(x_m))^\tT,
\)
is bijective where the
properties from above hold component-wise.
A distance measure on \((\mathbb S^1)^m\) is given by
\[
d_{(\mathbb S^1)^m} (p,q)
= \norm[\big]{\bigl(\arccos\,\langle p_i, q_i\rangle \bigr)_{i=1}^m}.
\]
In the following, we introduce higher order differences
on~\((\mathbb S^1)^m\). We employ the representation system \(\mathbb S^1 \cong [-\pi,\pi)\) induced by using an arbitrary but fixed exponential map.
Let \(x^{(j)}\in [-\pi,\pi)^m\), \(j=1,\ldots,d\).
Using the notation \(x \coloneqq (x^{(1)}, \ldots, x^{(d)}) \in [-\pi,\pi)^{m\times d}\),
the \emph{absolute cyclic difference} of~\(x^{(1)},\ldots,x^{(d)}\) with respect
to a weight \(w\in\mathbb R^d\backslash\{0\}\) is defined as
\begin{equation*}
D(x;w)
\coloneqq
\min_{\alpha\in\mathbb R^m}
D([(x^{(1)}+\alpha,\ldots,x^{(d)}+\alpha)]_{2\pi};w),
\end{equation*}
where \([y]_{2\pi}\) for some \(y\in(\mathbb S^1)^m\) is multivalued and its $i$th component $([y]_{2\pi})_i$ is given by
\begin{equation} \label{eq:DefMultivaluedBracket}
([y]_{2\pi})_i =
   \begin{cases}
   (y_i)_{2\pi},    &  \text{ if } y_i\neq (2z+1)\pi \text{ for all } z\in\mathbb Z,     \\
    \pm\pi          & \text{ else.}
   \end{cases}
\end{equation}
This definition may seem a bit technical at first glance. However, it allows for two
points~\(x^{(i)},$ $x^{(j)},$ $i,j\in\mathbb I_d \coloneqq \{1,\ldots,d\}\), having the
same value \(x_l^{(i)}=x_l^{(j)}\) in one component~\(l\in\{1,\ldots,m\}\) to be treated differently, cf. the definition for \(\mathbb S^1\) in \cite[Section 2]{bergmann2014second}.
In fact, we may choose any \(q\in(\mathbb S^1)^m\) as a base point for our
representation system, which shifts any set of points given with respect
to~\(\exp_{q'}\) by a fixed value of \(\alpha \coloneqq \exp_q^{-1}(q')\).
When the shift by~\(\alpha\in\mathbb R^m\) is small enough, such that no
component of~\(x\) is affected by the component-wise application
of~\([\cdot]_{2\pi}\), both representation systems yield the same value. Using this notation we can simplify both the definition of the second order difference and the proximal mappings derived later on.
The minimum in the definition of the difference simplifies to
\begin{equation}\label{eq:xkshifts}
D(x;w)
= \min_{k\in\mathbb I_d^m}
D([(x^{(1)}-x_k+\pi,\ldots,x^{(d)}-x_k+\pi)]_{2\pi};w)
,
\end{equation}
where \(x_k \coloneqq \bigl(x^{(j)}_{k_j}\bigr)_{j=1}^m\). This is illustrated in
Figure~\ref{fig:shiftplane} for three points \(x,y,z\in(\mathbb S^1)^2\).
	We note that this definition contains the notion introduced in \cite{bergmann2014second} for $\mathbb S^1$ as a special case.
\begin{figure*}[tbp]\centering
	\begin{subfigure}[t]{.49\textwidth}\centering
		\includegraphics{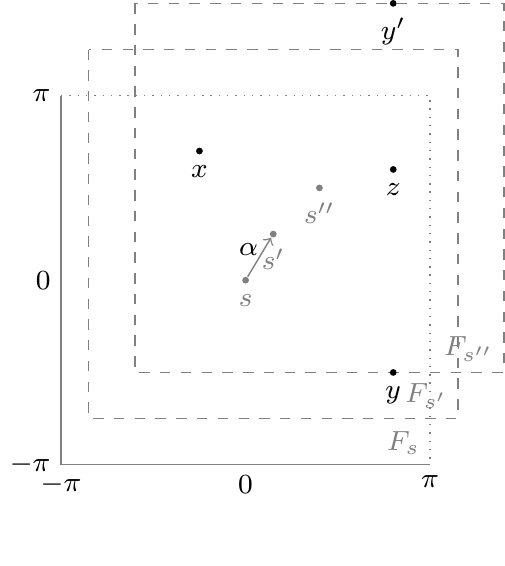}
		\caption{Different shifts, where the first two, \(F_s, F_{s'}\) yield the
			same constellation of \(x,y,z\).}\label{subfig:alphacases}
	\end{subfigure}
	\begin{subfigure}[t]{.49\textwidth}\centering
		\includegraphics{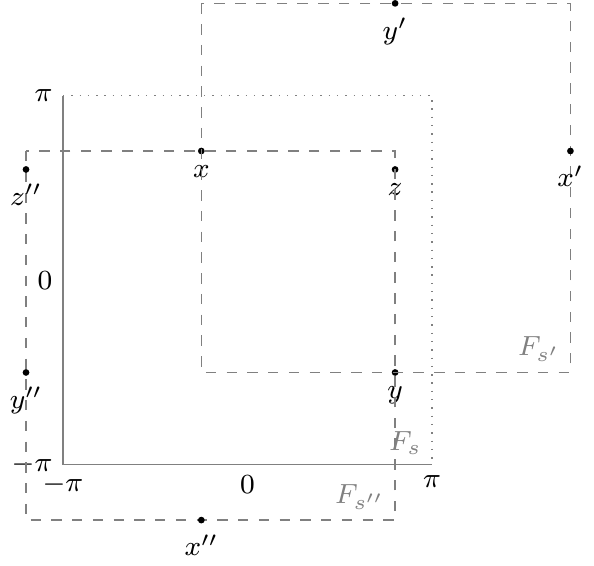}
		\caption{Three representation systems with points \(x,y,z\) at \(\pm\pi\)
			in at least one dimension.}\label{subfig:bordercases}
	\end{subfigure}
	\caption{For given points \(x,y,z\in[-\pi,\pi)^2\) in a representation
		system \(F_s\), i.e., with base point \(s\) the two subfigures illustrate different shifts:~(\subref{subfig:alphacases}) shifts by
		arbitrary \(\alpha\in\mathbb R^2\), e.g.,
		to \(s' = s+\alpha\) yielding the same value of \(D_2\) , and~(\subref{subfig:bordercases})
		cases yielding different values for the second order difference and a minimum occurring for \(x,y,z\) at the borders of the representation system.
	}
	\label{fig:shiftplane}
\end{figure*}

Finally, we come to the space of interest in this paper
which is \(\mathcal X \coloneqq (\mathbb S^1)^m\times \mathbb R^n\).
In this space a vector~\(x=(x_i)_{i=1}^{n+m}\) consists of two
parts: the phase-valued~\(x_\mathbb S \coloneqq (p_i)_{i=1}^m\in(\mathbb S^1)^m\)
and the real valued
components~\(x_{\mathbb R} = (x_{i})_{i=m+1}^{m+n}\in\mathbb R^n\)
of \(x\in\mathcal X\). In the following, we will use any representation system in order to write the cyclic components~\(x_{\mathbb S}\) as a vector \(x_{\mathbb S}\in[-\pi,\pi)^m\). This also allows for \(x\in\mathcal X\) to be seen as a vector, where the first \(m\) components are restricted to \([-\pi,\pi)\) and the remaining \(n\) ones are real-valued.
The distance of two points~\(x,y\in\mathcal X\) on this product space is given
by
\begin{equation}
d_{\mathcal X}(x,y)
= \sqrt{%
	\norm{x_{\mathbb R}-y_{\mathbb R}}^2
	+ d_{(\mathbb S^1)^m}(x_{\mathbb S},y_{\mathbb S})^2}.
\end{equation}
For a set of points \(x^{(1)},\ldots,x^{(d)}\in\mathcal X\),
using the notation \(x=(x^{(1)},\ldots,x^{(d)})\) as before, the \emph{finite difference for cyclic and noncyclic data} with respect to a weight \(w\in\mathbb R^{d}\backslash\{0\}\) is defined by
\begin{equation}
D(x;w) \coloneqq \sqrt{%
	D(x_{\mathbb R};w)^2 + D(x_{\mathbb S};w)^2
}.
\end{equation}
We further introduce the short hand notations
\begin{equation}\label{eq:diffCycDiff}
D_d(x) = D(x,b_d),\quad x\in\mathcal X^{d+1},\ d\in\mathbb N,
\end{equation}
to denote the corresponding absolute finite differences of order \(d\).
Furthermore we introduce ---with a slight abuse of the difference notation--- the \emph{second order mixed difference} of four points, e.g. given on a \(2\times 2\) subset of the pixel grid \(\mathcal \Omega_0\)) by~\(D_{1,1}(x) \coloneqq D(x;b_{1,1})\) for \(x\in\mathcal X^{4}\),
with \(b_{1,1} = (-1,1,1,-1)^\tT\).
For the weights corresponding to first and second order differences,
we have a particularly nice representation, which is given by the following Lemma.
\begin{lemma}\label{lem:RepresentationCombinedDifferences}
For~\(w\in\{b_1,b_2,b_{1,1}\}\) and~$x \in \mathcal X^d $  where~$d$ denotes the length of~\(w\), we have
\begin{equation}\label{eq:Dxwmodulo}
D(x,w)^2 = \norm{(x_{\mathbb S}w)_{2\pi}}^2 + \norm{x_{\mathbb R}w}^2 ,
\end{equation}
\end{lemma}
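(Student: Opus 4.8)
The plan is to prove the two summands in \eqref{eq:Dxwmodulo} separately and to exploit that the minimization defining the cyclic difference decouples over the $m$ circle factors. By the definition of $D(x;w)$ on $\mathcal X$ we have $D(x,w)^2 = D(x_{\mathbb R};w)^2 + D(x_{\mathbb S};w)^2$, and the linear part is immediate: $D(x_{\mathbb R};w) = \norm{x_{\mathbb R}w}$ by definition, so $D(x_{\mathbb R};w)^2 = \norm{x_{\mathbb R}w}^2$ reproduces the second term. Everything therefore reduces to $D(x_{\mathbb S};w)^2 = \norm{(x_{\mathbb S}w)_{2\pi}}^2$. Since the Euclidean norm in the definition of the cyclic difference squares to a sum over the $m$ components and the shift $\alpha\in\R^m$ acts componentwise, the minimization splits, $\min_{\alpha\in\R^m}\sum_{i=1}^m s_i(\alpha_i) = \sum_{i=1}^m\min_{\alpha_i\in\R}s_i(\alpha_i)$, where $s_i(\alpha_i)$ denotes the $i$th squared summand. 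Hence it suffices to treat a single circle factor $m=1$ and to show that for scalars $x^{(1)},\dots,x^{(d)}\in[-\pi,\pi)$ and $S \coloneqq \sum_j w_j x^{(j)}$ one has $\min_\alpha \lvert g(\alpha)\rvert = \lvert (S)_{2\pi}\rvert$, where $g(\alpha) \coloneqq \sum_j w_j([x^{(j)}+\alpha]_{2\pi})$; summing the squared scalar identities over the components then yields the first term $\norm{(x_{\mathbb S}w)_{2\pi}}^2$.

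For the lower bound $\min_\alpha\lvert g(\alpha)\rvert \ge \lvert(S)_{2\pi}\rvert$ I use that each wrapped value satisfies $[x^{(j)}+\alpha]_{2\pi} = x^{(j)}+\alpha+2\pi k_j$ for some $k_j\in\Z$, which holds for either choice in the multivalued boundary case \eqref{eq:DefMultivaluedBracket}. Because every $w\in\{b_1,b_2,b_{1,1}\}$ has integer entries and $\sum_j w_j = 0$, the shift cancels and $g(\alpha) = S + 2\pi\sum_j w_j k_j \in S + 2\pi\Z$. Thus $g(\alpha)$ is congruent to $S$ modulo $2\pi$, and since $(S)_{2\pi}$ is by construction the representative of smallest modulus (lying in $[-\pi,\pi)$), we obtain $\lvert g(\alpha)\rvert \ge \lvert(S)_{2\pi}\rvert$ for every admissible $\alpha$. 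Integrality of the weights is exactly what the restriction $w\in\{b_1,b_2,b_{1,1}\}$ buys here.

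The upper bound is the heart of the argument: I must produce a shift $\alpha$ (together with a boundary choice) for which $\lvert g(\alpha)\rvert \le \pi$; combined with the lower bound and with $g(\alpha)\equiv S \pmod{2\pi}$ this forces $\lvert g(\alpha)\rvert = \lvert(S)_{2\pi}\rvert$. Viewing $g$ as a function of $\alpha$, it is $2\pi$-periodic and piecewise constant, takes values in $S+2\pi\Z$, and jumps by $-2\pi w_j$ whenever $x^{(j)}+\alpha$ crosses the wrap boundary $\pi$; over one period the attainable values are $S$ shifted by $-2\pi$ times the partial sums of the entries of $w$ in the cyclic order of the $x^{(j)}$. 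Here the multivaluedness of $[\cdot]_{2\pi}$ at $\pm\pi$ is essential and not a technicality: when points coincide (e.g.\ $x^{(1)}=x^{(3)}$ for $b_2$) their crossings happen simultaneously and would skip intermediate representatives, whereas allowing coincident points to split into $+\pi$ and $-\pi$ fills in precisely these gaps, so that the attainable set becomes a contiguous run of representatives in $S+2\pi\Z$. It therefore suffices to minimize over the finite family of shifts in \eqref{eq:xkshifts}, which place one of the $d$ points at the boundary, and to check that for each admissible $w$ some such shift realizes the integer $K^\ast$ with $S+2\pi K^\ast=(S)_{2\pi}$. For $b_1=(-1,1)$ this is immediate (take $\alpha=-x^{(1)}$, giving $g=(S)_{2\pi}$); for $b_2$ and $b_{1,1}$ it is a short case analysis over the cyclic orderings of the three, respectively four, involved points, which for $\mathbb S^1$ is already contained in \cite{bergmann2014second}.

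I expect this case analysis underpinning the upper bound to be the main obstacle, and within it the configurations with coincident points—where the boundary splitting in \eqref{eq:DefMultivaluedBracket} is exactly what makes the minimal representative attainable—are the delicate ones; the lower bound and the reduction to the scalar, one-component problem are routine. Combining the scalar equality in every component with the linear part computed above yields \eqref{eq:Dxwmodulo}.
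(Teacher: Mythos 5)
Your proposal is correct and follows essentially the same route as the paper: split off the real-valued components (where the identity is immediate), decouple the cyclic minimization over the $m$ circle factors, and reduce to the scalar $\mathbb S^1$ identity, which the paper simply imports as Proposition 2.5 of \cite{bergmann2014second}. The only difference is that you additionally sketch a proof of that scalar identity (lower bound from integrality and zero-sum of $w$, upper bound from the finitely many boundary shifts of \eqref{eq:xkshifts}) before deferring the remaining case analysis to the same reference, whereas the paper cites it outright.
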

\begin{proof}
For the real-valued components there is nothing to show. Hence we may restrict to \(n=0\) (which corresponds to purely cyclic data)
and thus have to show that $D(x,w) = \norm{(xw)_{2\pi}}$ for \(x\in(\mathbb S^1)^{n\times d}\).
To this end we apply Proposition 2.5 of~\cite{bergmann2014second} to each row \(x\) and conclude
the validity of~\eqref{eq:Dxwmodulo}.
\end{proof}
\subsection{Inpainting combined cyclic and vector space data}
\label{subsec:InpaintingCombined}
We can now apply the definition of absolute differences for combined cyclic and
vector space data we derived in Section~\ref{subsec:DefAbsDifferences} to obtain variational models for inpainting and simultaneous inpainting and denoising. This extends the models for the Euclidean differences in Subsection~\ref{subsec:ModelVectorSpace} using a first order TV term~\eqref{eq:DefFirstDiff4VectorData} as well as to the second order TV terms~\eqref{eq:TV2iso}
and~\eqref{eq:TV2mix} to a more general setting. The noiseless inpainting model now reads
\begin{equation}\label{eq:Functional4InpaintingNoiselessCombCyclicVec}
\begin{split}
	\argmin_{{x}\in\mathcal X^{N\times M}}&\
\alpha \operatorname{TV}_{1}(x)
+ \beta \operatorname{TV}_{2}(x)
+ \gamma \operatorname{TV}_{1,1}(x),\\
&\text{ subject to }
x_{i,j}=f_{i,j}\text{ for all }(i,j)\in \Omega^C
.\end{split}
\end{equation}
Here, $\operatorname{TV}_{1}$ is defined by~\eqref{eq:DefFirstDiff4VectorData} incorporating the first order absolute differences $D_1$ given by~\eqref{eq:diffCycDiff}
and
the second order TV terms
$\operatorname{TV}_{2}$,
$\operatorname{TV}_{1,1}$ are defined by
\eqref{eq:TV2iso} and~\eqref{eq:TV2mix} employing
the second order absolute differences $D_2,$ $D_{1,1}$ given
by~\eqref{eq:diffCycDiff}, respectively.

Proceeding similarly, we get a variational formulation of the inpainting problem for noisy combined cyclic and vector space data by computing a minimizer of
\begin{equation}
	\label{eq:Functional4InpaintingNoisyCombCyclicVec}
	\begin{split}
	J_{\Omega}(x) &=
	F_{\Omega^C}(x; f)
	+ \alpha \operatorname{TV}_{1}(x)
	\\&\qquad+ \beta \operatorname{TV}_{2}(x)
	+ \gamma \operatorname{TV}_{1,1}(x).
	\end{split}
\end{equation}
Here the data term is given by~\eqref{eq:DataTerm4Inpainting}
using the distance function on $\mathcal X = (\mathbb S^1)^m\times \mathbb R^n.$
As in the noiseless situation,
$\operatorname{TV}_{1}$ is defined by~\eqref{eq:DefFirstDiff4VectorData} again incorporating the first order absolute cyclic differences $D_1$
and the second order TV terms
$\operatorname{TV}_{2}$,
$\operatorname{TV}_{1,1}$ are defined by
\eqref{eq:TV2iso} and~\eqref{eq:TV2mix} employing
the second order absolute cyclic differences $D_2,$ $D_{1,1}$ from \eqref{eq:diffCycDiff}, respectively.
\section{Algorithms for inpainting and denoising combined cyclic and linear space data}
\label{sec:Algorithms}
%
%
In this section, we derive algorithms to solve the inpainting
problem~\eqref{eq:Functional4InpaintingNoiselessCombCyclicVec}, and the combined inpainting and
denoising problem~\eqref{eq:Functional4InpaintingNoisyCombCyclicVec}. Note that the latter includes the denoising of
combined cyclic and vector space data for the case of \(\Omega=\emptyset\), an
empty inpainting set. These algorithms are based on a cyclic proximal point
algorithm  whose concept we recall in
Section~\ref{subsec:cppaGeneralExplanation}.
We derive explicit formulas for the proximal mappings that are needed for inpainting
and denoising of such combined data in
Section~\ref{subsec:proxiesNeeded4Inpainting}.
Using these explicit representations, we derive a cyclic proximal point
algorithm for inpainting noiseless combined cyclic and vector space data
and similarly for simultaneously inpainting and denoising data
in Section~\ref{subsec:InpaintDenoiseCPPA}. This also includes an efficient
choice for the cycles involved. Finally in Section~\ref{subsec:Convergence}, we
prove convergence of our algorithm to a minimizer under certain conditions
that reflect the space-inherent non-convexity of the involved functionals.
%
%
%
\subsection{The cyclic proximal point algorithm}
\label{subsec:cppaGeneralExplanation}
For a closed, convex and proper functional \(\varphi\colon\mathbb R^n\to \mathbb R\cup\{\infty\}\) the \emph{proximal mapping} is given by
\begin{equation}\label{eq:DefProx}
			\prox_{\lambda \varphi}(f)
			\coloneqq
			\argmin_{x\in\mathbb R^n}
			\frac{1}{2} \norm[\big]{f-x}^2 + \lambda \varphi (x),
			\quad
			f\in\mathbb R^n,
\end{equation}
where \(\lambda > 0\) is a tradeoff or regularization parameter. The fixed points of \(\prox_{\lambda \varphi}(f)\) are minimizers of \(\varphi\). Hence, if the proximal mapping \(\prox_{\lambda \varphi}(f)\) can be computed in closed form, an algorithm for finding a minimizer is given by iterating
\[
	x^{(k)} = \prox_{\lambda \varphi}(x^{(k-1)}),\quad k=1,2,\ldots
\]
for some starting value \(x^{(0)}
\). This algorithm is called \emph{proximal point algorithm} (PPA) and was introduced by Rockafellar~\cite{Roc76}. It was recently extended to Riemannian manifolds~\cite{FO02} and also to Hadamard spaces~\cite{Bac13}.
Denoting the distance on a Riemannian manifold \(\mathcal M\) by \(d_{\mathcal M}\) the \emph{proximal mapping on a manifold} reads for a function \(\varphi\colon\mathcal M^n\to\mathbb R\) as
\begin{equation}\label{eq:DefProxM}
			\prox_{\lambda \varphi}(f)
			\coloneqq
			\argmin_{x\in\mathcal M^n}
			\frac{1}{2} d_{\mathcal M}(f,x)^2 + \lambda \varphi (x),
			\quad f\in\mathcal M^n.
\end{equation}
We note that on some manifolds, e.g.\ the spheres \(\mathbb S^d\) there is no definition of a (globally) convex function. Hence the minimizer might not be unique. This is for example the case when looking at our space \(\mathcal X^d\), as this is not even the case for cyclic data, cf.~\cite{bergmann2014second}.

If the function \(\varphi\) can be split into simpler parts, i.e.\ \( \varphi = \sum_{i=1}^c \varphi_i\), for which then individually the proximal mappings are known in closed form, a similar algorithm is given for a sequence \(\{\lambda_k\}_k\) of regularization parameters by
\begin{equation*}
	\begin{split}
	x^{(k + \frac{l}{c})} = \prox_{\lambda_k \varphi_{l}}
	\bigl(
	x^{(k + \frac{l-1}{c})}
	\bigr),&%
	\\l=1,\ldots,c,&\ k=1,2,\ldots,
	\end{split}
\end{equation*}
and it is called \emph{cyclic proximal point algorithm} (CPPA). Its formulation on Euclidean space is derived in~\cite{Ber11}, see also the survey~\cite{Ber10}.
It converges to a minimizer of \(\varphi\) if
\begin{equation}\label{eq:CPPAlambda}
		\sum\nolimits_{k=0}^\infty \lambda_k = \infty,
		\quad \text{and}
		\quad \sum\nolimits_{k=0}^\infty \lambda_k^2 < \infty.
\end{equation}
The concept of CPPAs for Hadamard spaces has been treated in in~\cite{Bac13a}.
A CPPA for TV minimization for manifolds and in Hadamard spaces has been derived in \cite{WDS2013}. For second order TV type problems, a CPPA to denoise $\mathbb{S}^1$ data
was derived in~\cite{bergmann2014second}.
A preliminary model, different to the one appearing in this paper, was applied to inpainting of $\mathbb{S}^1$ data in~\cite{BW15}. For manifold data in general, the main challenge  is to derive proximal mappings which are as explicit as possible.

\subsection{Proximal mappings for inpainting}
\label{subsec:proxiesNeeded4Inpainting}

Here we derive closed form expressions for the proximal mappings needed to make the cyclic proximal point algorithm from 
Section~\ref{subsec:cppaGeneralExplanation} work for the inpainting problems~\eqref{eq:Functional4InpaintingNoiselessCombCyclicVec}
and~\eqref{eq:Functional4InpaintingNoisyCombCyclicVec} in the nonlinear spaces considered in this paper.
In particular, we derive proximal mappings incorporating constraints directly. 

We first need some basic results on the linear case which involves vectors of real-valued data only.
To this end, we start with a generalization of~\cite[Lemma 3.1]{bergmann2014second}. We derive explicit expressions for the proximal mappings of functions living on linear spaces which are of the form
\begin{equation}
\varphi(x) = \norm{xw-a}, \quad a\in\mathbb R^n,
\end{equation}
where the target variable $x$ is a matrix in $\mathbb R^{n\times d}$, and
where \(d\) corresponds to the length of \(w\). 
The vector \(a\) introduces an offset. We employ the notation 
\(\norm{y}[\text{F}] = \sqrt{\sum_{i,j=1}^{n,d} \bigl(y_i^{(j)}\bigr)^2}\) to denote the
Frobenius norm of a matrix \(y\).

\begin{lemma}\label{lem:proxRn}
	Let~\(f = \bigl(f^{(1)},\ldots,f^{(d)}\bigr)\in\mathbb R^{n\times d}\) be a matrix whose columns \(f^{(i)}\) represent the data vectors,
    let \(0\neq w\in\mathbb R^d\) ($w$ not necessarily a weight),
	and \(\lambda > 0\) be given. For the functional
	\begin{align}\label{eq:def-E}
	E(x;f,a,w) = \frac{1}{2}\norm{f-x}[\text{F}]^2 + \lambda\norm{xw-a},
	\end{align}
	with target variable  $x \in \mathbb R^{n\times d}$, the minimizer \(\hat x\) is given by 
	\begin{align}\label{eq:minim-E}
	\hat x = f - msw^\tT,
	\end{align}
	where
	~\( s \coloneqq 
		\begin{cases}
		\frac{fw-a}{\norm{fw-a}}&\mbox{ if \( \norm{fw-a}\neq 0 \),}\\
		0 &\mbox{ else,}
	\end{cases}\)
	\\
	and
	~\(m \coloneqq \min\bigl\{\lambda\), \(
		\frac{\norm{fw-a}}{\norm{w}^2}\bigr\}\).
	The minimum $E(\hat x;f,a,w)$ is given by  
    \begin{align}\label{eq:E-Minimum}
		E(\hat x;& f,a,w)\\
		&= 
		\begin{cases}
		    \frac{1}{2}{\norm{fw-a}^2}
		         &\mbox{if \( m\leq \lambda \),}\\
		    \norm{w}^2\bigl(\frac{1}{2}\lambda^2 + \lambda    
		           (\norm[\big]{\frac{fw-a}{\norm{w}^2}} - \lambda))
		         &\mbox{otherwise.}
		\end{cases}
    \end{align}
	Furthermore, given data \(f,\tilde f\in\mathbb R^{n\times d}\),
	and different offsets \(a,\tilde a\in\mathbb R^n\), the
	following implication holds:
	\begin{align}\label{eq:CompareDifferentfanda}   
		\norm{fw&-a} < \norm{\tilde fw-\tilde a}\\
		&\implies \	
		\min_{x\in\mathbb R^{n\times d}} E(x;f,a,w) <  \min_{x\in\mathbb R^{n\times d}} E(x;\tilde f,\tilde a,w).
	\end{align}
\end{lemma}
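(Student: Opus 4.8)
The plan is to use that $E(\,\cdot\,;f,a,w)$ is strictly convex and coercive in $x$ (a positive definite quadratic $\tfrac12\lVert f-x\rVert_{\mathrm F}^2$ plus the convex term $\lambda\lVert xw-a\rVert$), so that it has a unique minimizer, together with the observation that the regularizing term depends on $x$ only through the single vector $v\coloneqq xw\in\mathbb R^n$. I would therefore split the minimization into two nested stages: an inner minimization over all $x$ with the value $v=xw$ held fixed, followed by an outer minimization over $v$.

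For the inner stage I would write $x=f+H$, so that the constraint becomes $Hw=v-fw$, and minimize $\tfrac12\lVert H\rVert_{\mathrm F}^2$ subject to this single linear constraint. Since the constraint couples the rows of $H$ only through $\langle H_i,w\rangle=(v-fw)_i$, it decouples into $n$ independent projections onto hyperplanes, whose minimal-norm solution is the rank-one matrix $H=\tfrac{1}{\lVert w\rVert^2}(v-fw)w^{\tT}$ with value $\tfrac{1}{2\lVert w\rVert^2}\lVert v-fw\rVert^2$. This reduces the whole problem to minimizing $g(v)=\tfrac{1}{2\lVert w\rVert^2}\lVert v-fw\rVert^2+\lambda\lVert v-a\rVert$ over $v\in\mathbb R^n$.

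For the outer stage I would substitute $u=v-a$ and $p\coloneqq fw-a$ to rewrite $g$ as $\tfrac12\lVert u-p\rVert^2+\lambda\lVert w\rVert^2\lVert u\rVert$, i.e.\ the proximity operator of $\lambda\lVert w\rVert^2\lVert\cdot\rVert$ evaluated at $p$. Its closed form is the (block) soft-thresholding $\hat u=\max\{0,\,1-\lambda\lVert w\rVert^2/\lVert p\rVert\}\,p$ for $p\neq0$ and $\hat u=0$ for $p=0$. Back-substituting $\hat v=\hat u+a$ into the inner formula and simplifying $\hat v-fw=-\tfrac{\min\{\lambda\lVert w\rVert^2,\lVert p\rVert\}}{\lVert p\rVert}\,p$ gives $\hat x=f-m\,s\,w^{\tT}$ with $s$ and $m=\min\{\lambda,\lVert fw-a\rVert/\lVert w\rVert^2\}$ exactly as stated, the case $p=0$ reproducing $s=0$, $m=0$, $\hat x=f$. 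Evaluating $g(\hat v)$ separately in the two regimes $\lVert fw-a\rVert\le\lambda\lVert w\rVert^2$ and $\lVert fw-a\rVert>\lambda\lVert w\rVert^2$ then yields the two stated expressions for the minimal value.

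Finally, for the comparison I would note that the minimal value depends on the data only through $t\coloneqq\lVert fw-a\rVert$, via $\phi(t)=\tfrac{t^2}{2\lVert w\rVert^2}$ for $t\le\lambda\lVert w\rVert^2$ and $\phi(t)=\lambda t-\tfrac12\lambda^2\lVert w\rVert^2$ otherwise. Each branch is strictly increasing in $t$, and they agree at $t=\lambda\lVert w\rVert^2$, so $\phi$ is strictly increasing on $[0,\infty)$; hence $\lVert fw-a\rVert<\lVert\tilde fw-\tilde a\rVert$ forces $\min E(\,\cdot\,;f,a,w)<\min E(\,\cdot\,;\tilde f,\tilde a,w)$, which is the asserted implication. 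I expect the only real obstacle to be the careful bookkeeping of the $\lVert w\rVert^2$ factors through the rescaled proximity operator, together with the consistent treatment of the degenerate case $fw=a$; everything else is a routine computation.
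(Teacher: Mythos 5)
Your proof is correct, and it takes a genuinely different route from the paper's. The paper first eliminates the offset $a$ via the substitution $y=x-\frac{a}{w_j}e_j^{\tT}$ for some $w_j\neq0$ and then locates the minimizer of the reduced functional $\frac{1}{2}\lVert g-y\rVert_{\mathrm F}^{2}+\nu\lVert yv\rVert$ by a direct first-order analysis (gradient where $\lVert yv\rVert\neq0$, subdifferential $\{zv^{\tT}:\lVert z\rVert\leq\nu\}$ at the kink), and it proves the implication \eqref{eq:CompareDifferentfanda} by a three-case discussion of where $\lVert fw-a\rVert$ and $\lVert\tilde fw-\tilde a\rVert$ lie relative to the threshold. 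Your nested minimization---first over the fiber $\{x:xw=v\}$, which decouples into $n$ minimum-norm hyperplane projections, then over $v$, which after rescaling by $\lVert w\rVert^{2}$ is exactly the block soft-thresholding $\prox_{\lambda\lVert w\rVert^{2}\lVert\cdot\rVert}(fw-a)$---buys two things: it replaces the subgradient case analysis by a standard closed-form proximal map, and it exhibits the optimal value as an explicit function $\phi(t)$ of $t=\lVert fw-a\rVert$ alone, so that \eqref{eq:CompareDifferentfanda} follows from the strict monotonicity of $\phi$ in one line instead of three cases. As a by-product, your computation shows that the first branch of \eqref{eq:E-Minimum} should read $\lVert fw-a\rVert^{2}/(2\lVert w\rVert^{2})$ under the condition $\lVert fw-a\rVert/\lVert w\rVert^{2}\leq\lambda$ (as printed, the condition ``$m\leq\lambda$'' is vacuous and a factor $\lVert w\rVert^{-2}$ is missing); this does not affect the implication, which only uses monotonicity, but it is worth recording. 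The strict convexity and coercivity you invoke at the outset also settle uniqueness of $\hat x$, which the paper handles only implicitly.
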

\begin{proof}
	We first reduce the functional to be minimized to an equivalent problem
	without offset.
	By assumption there is an index $j$ such that $w_j \neq 0$,
    which allows us to write~\eqref{eq:def-E} as 
    \begin{align*}
    E(x;f,a,w) 
	= \frac{1}{2}
	\norm{f-x}[\text{F}]^2
	+ \lambda|w_j|\norm[\big]{\bigl(x-\frac{a}{w_j}e_j^\tT\bigr) \bigl(\frac{w}{w_j}\bigr)}.    
    \end{align*}
    Defining the new target matrix  $y \coloneqq x - \frac{a}{w_j} e_j^\tT$, 
    the new data matrix $g = f - \frac{a}{w_j} e_j^\tT,$
	the new regularizing parameter $\nu \coloneqq \lambda \abs{w_j}\rvert$,
	and the new (not necessarily weight) vector $v \coloneqq \frac{w}{w_j}$, we obtain the new problem
	\begin{align}\label{eq:ReducedProblemWithoutShift}
	F (y;g,v) = \frac{1}{2} \norm{g-y}[\text{F}]^2 + \nu \norm{yv},
	\end{align}	
	where the second term is free of an offset. The relation between minimizers \(\hat{x}\) of \(E\) and \(\hat{y}\) of \(F\) is given via
	$\hat{y} = \hat{x} - \frac{a}{w_j} e_j^\tT.$
	
	We now consider the problem~\eqref{eq:ReducedProblemWithoutShift} and first
	show~\eqref{eq:minim-E} for $F.$
	The corresponding statement for $E$ follows by carrying out the
	resubstitution.	
	If \(\norm{gv} = 0\) then we have \(F(g; g,v) = 0\) and hence
	\(\hat{y}=g\) is the
	minimizer of~\eqref{eq:ReducedProblemWithoutShift}.
	So we may assume \(\norm{gv} \neq 0\) in the following.
	We now distinguish whether \(\norm{yv}\neq 0\) or \(\norm{yv} = 0\). 
	In the first case, we may differentiate $F$,
	and setting the gradient of $F$ to zero results in
	\[
	0 = y-g + \frac{\nu}{\norm{yv}}(yv)v^\tT.
	\]
	We multiply by \(v\) to
	obtain
	\(
	yv-gv
	= -\nu\frac{yv}{\norm{yv}}\norm{v}^2.
	\)
	Rearranging yields
	\(
	\bigl(1+\nu\frac{\norm{v}^2}{\norm{yv}}\bigr)yv = gv,
	\)
	which implies \(\frac{yv}{\norm{yv}} = \frac{gv}{\norm{gv}}\), i.e., both
	vectors have the same direction.\\This leads to
	\[
	y = g - \nu\frac{gv}{\norm{gv}}v^\tT.
	\]
	For \(\norm{yv}=0\) we look at the subgradient of $F.$
	As condition for a minimizer $\hat{y}$, we have that $\hat{y}-g$ is in the subgradient of $\nu\norm{\hat{y}v}.$ For $y$ with \(\norm{yv}=0\), this subgradient is
	given as $\{zv^\tT\colon\norm{z} \leq \nu \}.$
	When considering the functional $F,$ the amplitude $m$ from the assertion of the lemma reads
	$m= \min\{\nu,\norm{gv}/\norm{v}\}$. If $\norm{gv}/\norm{v} < \nu,$ then $F$ is differentiable at $y=g-msv^\tT,$  $\|yv\| \neq 0,$ and we are in the previously considered case.
	Hence, we may assume that $m = \nu.$ Then, for $\hat{y}=g-msv^\tT,$ we have $\hat{y}-g= msv^\tT $ with $m = \nu$. This shows that $\hat{y}$ fulfills the condition of a minimizer.
	In consequence,~\eqref{eq:minim-E} is true for the functional $F$. 
	Then resubstituting shows~\eqref{eq:minim-E} for $E$,
	and plugging $\hat x$ into $E$ we get~\eqref{eq:E-Minimum}.

	It remains to show the implication~\eqref{eq:CompareDifferentfanda}.
	To this end, let $\mu = \frac{fw - a}{\norm{w}}$ and
	$\tilde \mu \coloneqq \frac{\tilde{f}w - \tilde a}{\norm{w}}$.
	By the assumption of~\eqref{eq:CompareDifferentfanda}, $\norm{\mu} < \norm{\tilde \mu}.$
   We consider three cases. If  $\norm{\tilde \mu} \leq \lambda$, then  $\norm{\mu} < \lambda.$
   Hence, the minimizer of $E(x;f,a,w)$ equals $\frac{1}{2} \|w\|_2^2 \norm{\mu}^2,$ and the one of $E(x;\tilde f,\tilde a,w)$ equals  $\frac{1}{2} \|w\|_2^2 \norm{\tilde \mu}^2 > \frac{1}{2} \|w\|_2^2 \norm{\mu}^2$
   which shows~\eqref{eq:CompareDifferentfanda}. If $\norm{\tilde \mu} > \lambda$ and 
   $\norm{\mu} < \lambda,$ we have to consider the second line of~\eqref{eq:E-Minimum} for 
   $\norm{\tilde \mu}$. From this we obtain a minimal value of~$E(x;\tilde f,\tilde a,w).$
   We have to show that   
   \[
   \norm{w}^2 \lambda \bigl(\frac{1}{2}\lambda + (\norm[\big]{\tilde \mu}  - \lambda)  ) > 
   \frac{1}{2} \norm{w}^2 \norm{\mu}^2;
   \] 
   but this is a consequence of the second summand on the left hand side being positive  
   and $\lambda \geq \norm{\mu}.$
   Finally, if both 
   $\norm{\tilde \mu} > \lambda$ and  $\norm{\mu} > \lambda,$
   we apply the second line of~\eqref{eq:E-Minimum} and see that we need  
   $\norm{\tilde \mu} - \lambda > \norm{\mu}  - \lambda $ for the statement to hold. 
   This is true by assumption which completes the proof.
\end{proof}

\begin{example}\label{ex:diff2-2d-prox}
	We continue with the situation from Example~\ref{ex:diff2-2d} and take three
	points~\(f^{(j)}\in\mathbb R^2\), \(j=1,2,3\) and
	denote~\(f = (f^{(1)}, f^{(2)} , f^{(3)})\).
	Depending on the chosen value for \(\lambda\)
	in the proximal mapping, there are two possibilities: If
	\(m=\frac{\norm{fw}}{\norm{w}^2} = \frac{\norm{fb_2}}{6} \leq \lambda\), we
	obtain three points \(x = \prox_{\lambda D_2}(f) = f-msb_2^\tT\) that
	lie on a line, cf. Figure~\ref{subfig:prox:line}. If \(m>\lambda\), then the
	result \(x\) of the proximal mapping does not yield \(D_2(x) = 0\),
	but the `movement' of the points in direction \(s\) is
	restricted by~\(\lambda b_2^\tT\), cf. Figure~\ref{subfig:prox:lambda}.
\end{example}
\begin{figure*}[tbp]\centering
	\begin{subfigure}[t]{.48\textwidth}\centering
		\includegraphics{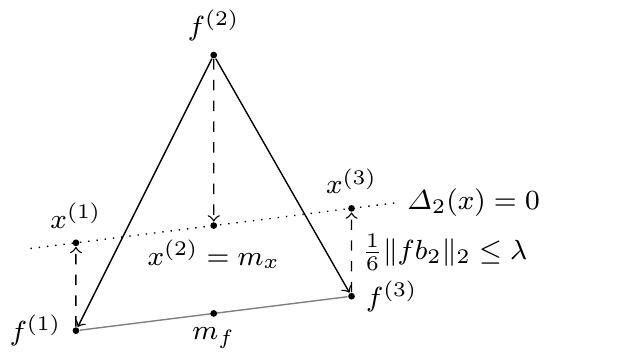}
		\caption{Minimizing \(D_2(f) = 2\norm{m_f-f^{(2)}}\) to\\
			zero by moving \(f\) to \(x\), where \(x^{(2)} = m_x\).}
		\label{subfig:prox:line}
	\end{subfigure}
	\begin{subfigure}[t]{.48\textwidth}\centering
		\includegraphics{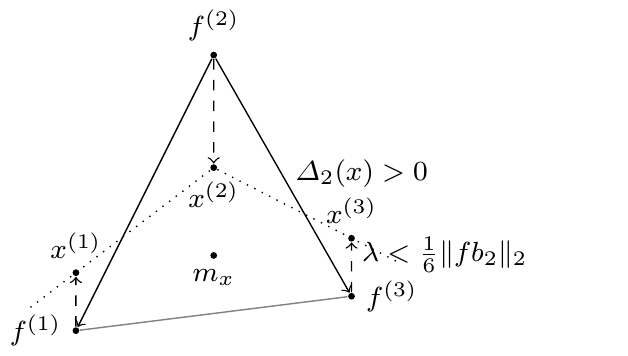}
		\caption{Tradeoff between minimizing \(D_2\) and a maximal movement
			of~
			\(f^{(j)}\), \(j=1,2,3\).
			}
		\label{subfig:prox:lambda}
	\end{subfigure}
	\caption{Two cases of the second order
		difference~\(D_2(f)\) in \(\mathbb R^n\) by looking at the plane
		generated by \(f^{(1)},f^{(2)},f^{(3)}\in\mathbb R^n\): The original points~\(f\) are
		moved onto~\(x\) towards forming a line:
		(\subref{subfig:prox:line})~yielding~\(D_2(x)=0\),  i.e. \(x_2\) is the mid point \(m_x\) of \(x_1\) and \(x_3\), and~(\subref{subfig:prox:lambda}) reducing \(D_2(x)<D_2(f)\)
		but restricting the movement to be less then~\(\lambda\in\mathbb R\)
		for~\(f^{(1)},f^{(3)}\) and~\(2\lambda\) for \(f^{(2)}\) respectively.
		}
	\label{fig:2Ddiff2}
\end{figure*}
%

After these preparations, we now deal with the proximal mappings needed for the inpainting problems~\eqref{eq:Functional4InpaintingNoiselessCombCyclicVec}
and~\eqref{eq:Functional4InpaintingNoisyCombCyclicVec}
for combined cyclic and vector space data.
In particular, each data item now is an element of \(\mathcal X = (\mathbb S^1)^m \times \mathbb R^n\).
As motivation, let us first have a look at the first order difference $D_1$ and 
the inpainting problem~\eqref{eq:Functional4InpaintingNoiselessCombCyclicVec} for noiseless data.
By the constraint $x_{ij}= f_{ij}$ outside the inpainting region, it might happen that at the boundary 
of the inpainting region the member $x_{ij}= f_{ij}$ is fixed but its neigbor, say $x_{i,j+1},$ may vary.
Then, we have to study the corresponding functional $D_1(x_{ij},x_{i,j+1})$ for fixed $x_{i,j}$ and find its
proximal mapping. The following theorem deals with this issue in a more general setup.

Before we state the theorem we introduce some notation: By again using a representation system \(\mathbb S^1 \cong[-\pi,\pi)\), we can interpret any \(x\in\mathcal X\) as a vector~\(x = (x_{\mathbb S}, x_{\mathbb R})^\tT \in \mathcal X^d\), where the same representation system is used for all cyclic components.\\
We define \((\cdot)_{\mathcal X}\colon\mathbb R^{(m+n)\times d} \to \mathcal X^d\) by
\begin{equation}\label{eq:DefBrakRoundX}
(x)_{\mathcal X} \coloneqq ((x_{\mathbb S})_{2\pi}, x_{\mathbb R})^\tT, 
\end{equation}
where \((x_{\mathbb S})_{2\pi}\) is defined as the component-wise application of \((\cdot)_{2\pi}\) as given in~\eqref{eq:DefRoundBracket}, i.e. is applied to
the \(m\) cyclic components of each column vector \(x^{(i)}\in\mathcal X\).
Similarly we define
\begin{equation}\label{eq:DefBrakX}
[x]_{\mathcal X} \coloneqq ([x_{\mathbb S}]_{2\pi}, x_{\mathbb R})^\tT, 
\end{equation}
where \([\cdot]_{2\pi}\) defined in~\eqref{eq:DefMultivaluedBracket} is applied to the phase-valued components of \(x\).

In the following we consider a weight vector $w \in \mathbb R^d$, a data matrix
$x = (x^{(1)},\ldots,x^{(d)})$ with each member \(x^{(i)}\) having values in
\(\mathcal{X} = (\mathbb S^1)^m \times \mathbb R^n\)
and a subset $A \subset \{1,\ldots,d\}.$
We partition $w$ into a variable part $w^a$ and into a fixed part $\tilde{w}$ according to whether the index $i$ of $w_i$
belongs to $A$ or not.
Accordingly, we partition $x$ into a variable part $x^a$ and into a fixed part $\tilde{x}$ 
and consider the mappings
\begin{equation}\label{eq:DifferencesWithFixedComponents}
\varphi_A\colon x^a  \mapsto D(x,w)
\end{equation}
for the corresponding differences $D(x,w)$, where only the $x^a$ are considered as variable and the \(\tilde x\) are fixed values.
We derive an explicit representation for the corresponding proximal mappings in the following theorem.
 
\begin{theorem} \label{thm:proxy_w}
	Let $w$ be one of the weights 
	$w=(-1,1),$ $w=(1,-2,1),$ or $w=(-1,1,1,-1)$ 
	which corresponds to considering the first order difference $D_1$
	and the second order differences $D_2$ and $D_{1,1}$, respectively.
	Let $d$ be the respective length of $w,$ $A \subset \{1,\ldots,d\},$
	and $w$ be partitioned into the corresponding variable part $w^a$ and into a fixed part $\tilde{w}$.
	We partition $x,f \in \mathcal{X}^d$ accordingly and let $\tilde{f} = \tilde{x}.$
	Then, the proximal mapping of $\varphi_A$ defined in~\eqref{eq:DifferencesWithFixedComponents}
	is given by 
	\begin{equation}\label{prox4combinedWithFixed}
		\prox_{\lambda \varphi_A}(f^a) =  (f^a - m \, s \,(w^a)^\tT)_{\mathcal X},
	\end{equation}	
	with the parameter $\lambda > 0;$ here, the direction(s) $s$ and amplitude $m$ are given by
	\begin{align}
		\label{eq:form_s_and_m}
	s &=
	\begin{cases}
	    \frac{[fw]_{\mathcal X}}{\norm{(fw)_{\mathcal X}}}
	         &\mbox{ if \( \norm{(fw)_{\mathcal X}}\neq 0\),}\\
	      0  &\mbox{ else,}
	\end{cases}\nonumber
	\intertext{and}
	m &= \min\biggl\{\lambda, \frac{\norm{(fw)_{\mathcal X}}}{\norm{w^a}^2}\biggr\}.
	\end{align}
\end{theorem}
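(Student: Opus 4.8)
The plan is to reduce the combined cyclic and linear problem to the purely real-valued situation already settled in Lemma~\ref{lem:proxRn}, exploiting that the three admissible weights are precisely those for which Lemma~\ref{lem:RepresentationCombinedDifferences} linearizes the difference, and then to project the resulting real minimizer back to $\mathcal X$ via the map $(\cdot)_{\mathcal X}$ from~\eqref{eq:DefBrakRoundX}.

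First I would absorb the fixed columns into an offset. Writing $x = (x^a,\tilde x)$ and splitting $w = (w^a,\tilde w)$ accordingly, the linear combination decomposes as $xw = x^a w^a + \tilde x\tilde w$. Since $\tilde x = \tilde f$ is constant, setting $a \coloneqq -\tilde f\tilde w$ turns $\varphi_A$ into the map $x^a\mapsto D\bigl((x^a,\tilde f),w\bigr)$, whose linear part reads $x^a w^a - a$. By Lemma~\ref{lem:RepresentationCombinedDifferences}, for each $w\in\{b_1,b_2,b_{1,1}\}$ the difference decouples as $D(x,w)^2 = \norm{(x_{\mathbb S}w)_{2\pi}}^2 + \norm{x_{\mathbb R}w}^2$, so up to the modulo on the cyclic block the proximal objective is exactly the energy $E(\,\cdot\,;f^a,a,w^a)$ of~\eqref{eq:def-E} in dimension $m+n$ with the variable columns indexed by $A$.

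The heart of the argument is to remove that modulo by passing to a suitable lift of the cyclic components. I would choose integer lifts of the cyclic coordinates of $f$ so that the combination $f_{\mathbb S}w$ already lies in $[-\pi,\pi)^m$; this is possible precisely because for each admissible weight the integer combinations $\sum_j w_j k_j$, $k_j\in\mathbb Z$, exhaust $\mathbb Z$ (the entries of $b_1,b_2,b_{1,1}$ have $\gcd$ equal to $1$), so individually shifting representatives realizes $(f_{\mathbb S}w)_{2\pi}=f_{\mathbb S}w$. In this lift the data term, a squared geodesic distance, agrees with the squared Euclidean distance of representatives as long as the minimizer stays within the injectivity radius, and the regularizer loses its modulo as long as $x_{\mathbb S}w$ stays in $[-\pi,\pi)^m$. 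Both provisos are verified a posteriori: by Lemma~\ref{lem:proxRn} the candidate minimizer is $\hat x^a = f^a - m\,s\,(w^a)^\tT$, whence $\hat x w = fw\bigl(1 - m\norm{w^a}^2/\norm{fw}\bigr)$ is a scalar multiple of $fw$ with factor in $[0,1]$ (because $m\le\norm{fw}/\norm{w^a}^2$), so the cyclic part of $\hat x w$ cannot leave $[-\pi,\pi)^m$ and the displacement from $f^a$ to $\hat x^a$ is small enough to keep the geodesic and Euclidean data terms equal.

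It then remains to read off the constants and project back. With the chosen lift $fw = f^a w^a - a$, and $\norm{(fw)_{\mathcal X}} = \norm{fw} = D(f,w)$, while $[fw]_{\mathcal X}=fw$ away from the boundary; substituting into the formulas of Lemma~\ref{lem:proxRn} yields exactly the $s$ and $m$ of~\eqref{eq:form_s_and_m}, and applying $(\cdot)_{\mathcal X}$ from~\eqref{eq:DefBrakRoundX} to the lifted minimizer returns the representative in $\mathcal X^{\abs{A}}$, giving~\eqref{prox4combinedWithFixed}. I expect the main obstacle to be the boundary situation where some cyclic component of $f_{\mathbb S}w$ equals $\pm\pi$: there the lift is not unique and the direction $s$ genuinely becomes multivalued, which is exactly what the bracket $[\cdot]_{2\pi}$ in~\eqref{eq:DefMultivaluedBracket}, and hence $[\cdot]_{\mathcal X}$ in~\eqref{eq:DefBrakX}, is designed to encode; confirming that every such choice still yields a genuine minimizer, via the subgradient analysis used in Lemma~\ref{lem:proxRn}, is where the real care is needed, paralleling the $\mathbb S^1$ discussion in~\cite{bergmann2014second}.
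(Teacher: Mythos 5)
Your overall strategy---absorb the fixed columns into an offset $a=-\tilde f\tilde w$, reduce to the Euclidean Lemma~\ref{lem:proxRn} via Lemma~\ref{lem:RepresentationCombinedDifferences} and a lift of the cyclic components, then read off $s$ and $m$ and project back with $(\cdot)_{\mathcal X}$, treating the antipodal case as the source of multivaluedness---is exactly the route the paper takes. But there is one substantive gap. The functional defining the proximal mapping is the pointwise minimum over \emph{all} integer shifts of a family of Euclidean energies: the data term contributes a shift matrix $k$ (one column per variable data item) and the cyclic difference contributes its own shift $\sigma$, and the paper writes this as $\mathcal E_{\mathcal X}(x^a)=\min_{k,\sigma}E_{k,\sigma}(x^a)$ with $\nu_{k,\sigma}=fw-2\pi(kw^a+\sigma)$. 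Consequently $\min_{x^a}\mathcal E_{\mathcal X}=\min_{k,\sigma}\min_{x^a}E_{k,\sigma}$, and exhibiting the minimizer of \emph{one} branch only bounds the global minimum from above. Your ``a posteriori verification'' (displacement within the injectivity radius, $\hat xw$ a $[0,1]$-multiple of $fw$) establishes precisely that the value of your candidate in $\mathcal X$ equals the minimum of the chosen branch; it does not show that no other branch attains a strictly smaller minimum. The missing ingredient is the monotonicity statement~\eqref{eq:CompareDifferentfanda} of Lemma~\ref{lem:proxRn}, which the paper included for exactly this purpose: the minimal value of $E(\cdot;f,a,w)$ is strictly increasing in $\norm{fw-a}$, so the optimal branches are precisely those with $\nu_{k,\sigma}=(fw)_{\mathcal X}$ (resp.\ the $2^{\abs{G}}$ boundary variants), and among these exactly one $k$ lands the minimizer in the fundamental domain $[-\pi,\pi)^{m\times t}$. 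Your proposal cites Lemma~\ref{lem:proxRn} but never invokes this implication, so the cross-branch optimality is unproven.

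A related, smaller imprecision: you justify the existence of a lift with $f_{\mathbb S}w\in[-\pi,\pi)^m$ by a $\gcd$ argument on the entries of $w$. When $A$ is a proper subset, however, only $w^a$ multiplies the adjustable representatives (e.g.\ $w^a=(-2)$ when the middle point of $D_2$ is the only active one, whose integer combinations are even), so the reduction cannot in general be realized by re-representing the data alone; it must be carried by the shift $\sigma$ internal to the cyclic difference. Keeping these two kinds of shifts separate, and coupled through $kw^a+\sigma$, is precisely the bookkeeping the paper's enumeration of the $E_{k,\sigma}$ performs, and it is also what makes the comparison step above unavoidable rather than a formality.
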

\begin{remark-Non}
	We note that the bracket $[\cdot]_{\mathcal X}$ and thus the proximal
	mapping (having an additional value for each additional instance of $s$), is
	multivalued if some components of \((f_{\mathbb S}w)_{2\pi}\) are equal
	to~\(-\pi\).
	More precisely, if there are \(l\in\{1,\ldots,n\}\) such components,
	we obtain \(2^l\) solutions from the different instances the vector \(s\)
	might take.
	The reason for this is, that the mapping $\varphi_A$ is no longer
	convex for data in $\mathcal X,$ and that the minimizer defining the proximal
	mapping is not necessarily unique.
	Owing to this observation, we consider set valued proximal mappings
	gathering all minimizers.
	We notice that the above proximal mapping is single-valued
	if and only if \((f_{\mathbb S}w)_{2\pi}\in(-\pi,\pi)^d\). This is the
	generic case.
	The degenerate case involving antipodal points appears rather seldom in
	practice; at least, in a non-artificial noisy  setup, it is very unlikely to
	encounter antipodal points. Then, at least the proximal mapping is single-valued yielding a deterministic result.
		However, we notice that jumps of hight close to $\pi$ are problematic since then stability issues appear. 
	Furthermore, data with antipodal points or almost antipodal points may often
	be interpreted as not fine enough sampled data. This means, if the sampling
	rate is higher, the distance of nearby data items might get smaller and the
	situation might just disappear. We note that this does not exclude the possibility of jumps in the finer sampled data.
	The point is that large critical jumps might be revealed as smaller jumps.
\end{remark-Non}
\begin{proof}[Proof of Theorem~\ref{thm:proxy_w}.]
	 In order to derive explicit formulae for the proximal mappings, we have to find the 
	 minimizer(s) of
	 \begin{equation}\label{eq:Xdfunctional}
	    \mathcal E_{\mathcal X} (x^a; f^a,w) \coloneqq
	    \frac{1}{2}\sum_{j \in A} d_{\mathcal X}(x^{(j)},f^{(j)})^2 + \lambda D(x;w).
	 \end{equation}
	 By Lemma~\ref{lem:RepresentationCombinedDifferences}
	 we may rewrite $E_{\mathcal X} (x^a; f^a,w)$ as 
	\begin{align}
	    \mathcal E_{\mathcal X}&(x^a; f^a,w)\\
	       &= 	\frac{1}{2}\sum_{j \in A}
	       \norm[\big]{f_{\mathbb R}^{(j)}-x_{\mathbb R}^{(j)}}^2
	\\&\qquad + \frac{1}{2}\sum_{j \in A}
	\min_{k^{(j)}\in\mathbb Z^{m}}
	\norm[\big]{f^{(j)}_{\mathbb S} - x_{\mathbb S}^{(j)} - 2\pi k^{(j)}}^2
	\\&\qquad	+
	\min_{\sigma\in\mathbb Z^m}
	\lambda\sqrt{
		\norm{x_{\mathbb R}w}^2+\norm{x_{\mathbb S}w-2\pi\sigma}^2}.
	\end{align}
We now use that $\tilde{x}=\tilde{f}$ and employ the notation 
\(\norm{\cdot}[\text{F}]\) for the Frobenius norm. For the remaining part of the proof, let \(t \coloneqq |A|\). We obtain that
  \begin{equation}\label{eq:IntroEks}
	  \begin{split}
  &\mathcal E_{\mathcal X}(x^a; f^a,w)\\
  &\quad=       
   \min_{\substack{k\in\mathbb Z^{m\times t}\\\sigma\in\mathbb Z^m}} 
  	    \Bigg(
  	    \frac{1}{2}  \norm{f^a_{\mathbb R} - x^a_{\mathbb R}}[F]^2 + 
        \frac{1}{2}  \norm{f^a_{\mathbb S} - x^a_{\mathbb S} - 2 \pi k}[F]^2\\
        &\qquad+\lambda   \sqrt{  
        	\norm{x^a_{\mathbb R}w^a + \tilde f_{\mathbb R} \tilde w}^2+
        	\norm{x^a_{\mathbb S}w^a - (2\pi\sigma - \tilde f_{\mathbb S} \tilde w) }^2
  	    }  
  	    \Bigg)\!.
  	  \end{split}
  \end{equation}
Using this, we rewrite the minimization problem to
\begin{align} 
		&\min_{x^a\in[-\pi,\pi)^{m \times t} \times \mathbb R^{n \times t}} \mathcal E_{\mathcal X} (x^a; f^a,w)
	\\
	&\quad=
    \min_{\substack{k\in\mathbb Z^{m \times t}\\\sigma\in\mathbb Z^m}}
	 \ 
    \min_{x^a\in[-\pi,\pi)^{m \times t} \times \mathbb R^{n \times t}} E_{k,\sigma}(x^a; f,w) \notag
	 \\
    &\quad=
    \min_{\substack{k\in\mathbb Z^{m \times t}\\\sigma\in\mathbb Z^m}}
	 \ \ 
    \min_{x^a\in[-\pi,\pi]^{m \times t} \times \mathbb R^{n \times t}} E_{k,\sigma}(x^a; f,w),
    \label{eq:ReducedProblemInProxTheoProof}
\end{align}
where
\begin{equation}\label{eq:DefEks}
	\begin{split}
		&
		E_{k,\sigma}(x^a; f,w)
		\\
		\quad
		&=
		\frac{1}{2}  \lVert f^a_{\mathbb R} - x^a_{\mathbb R}\rVert^2_F +
 		\frac{1}{2}  \lVert f^a_{\mathbb S} - x^a_{\mathbb S} - 2 \pi k\rVert^2_F\\
&\quad+\lambda\sqrt{
	\norm{x^a_{\mathbb R}w^a + \tilde f_{\mathbb R} \tilde w}^2+
	\norm{x^a_{\mathbb S}w^a - (2\pi\sigma - \tilde f_{\mathbb S} \tilde w) }^2
}.
\end{split}
\end{equation}
Having a look at~\eqref{eq:DefEks}, there exist values~$\tilde{k},\tilde{\sigma}$ such that
$E_{\hat{k},\hat{\sigma}}(\hat{x})= E_{\tilde{k},\tilde{\sigma}}(\tilde{x}).$ 
Summing up, the problem reduces to finding the minimizers of all $E_{k,\sigma}$ in $[-\pi,\pi]^{m \times t}$
and comparing their value.
For the remaining part of the proof, let \(0_n\) be a zero (column) vector of length \(n\) and \(0_{n,t}\) be a zero-matrix of dimension \(n\times t\).

For any $k,\sigma,$ the functional $E_{k,\sigma}$ has a unique minimizer
given by Lemma~\ref{lem:proxRn} as
\begin{align}\label{eq:temp1}
\hat{x}^a_{k,\sigma} &= f^a-2\pi 
\begin{pmatrix}
	k\\0_{n,t}
\end{pmatrix}
- s_{k,\sigma}m_{k,\sigma}(w^a)^\tT.
\end{align}
We first derive $s_{k,\sigma}$ using Lemma~\ref{lem:proxRn}, where we use the notation,
\begin{align}
 s_{k,\sigma}= \nu_{k,\sigma}/ \|{\nu_{k,\sigma}}\|_2.
\end{align}
We notice that the data for Lemma~\ref{lem:proxRn} is given by $f^a-2\pi
\left(\begin{smallmatrix} k\\0_{n,t}\end{smallmatrix}\right)
$
and the offset $a$ in the same lemma is $a= 2\pi
\left(\begin{smallmatrix} \sigma\\0_{n}\end{smallmatrix}\right)
- \tilde{f}\tilde{w}$.
We get
\begin{equation} \label{eq:defNuksigma}
	\begin{split}
		\nu_{k,\sigma} &= \left(f^a-2\pi
		\left(\begin{smallmatrix} k\\0_{n,t}\end{smallmatrix}\right)
		\right)w^a + \tilde{f}\tilde{w} - 2 \pi
		\left(\begin{smallmatrix} \sigma\\0_{n}\end{smallmatrix}\right)\\
		&= fw - 2\pi \left(
		\left(\begin{smallmatrix} k\\0_{n,t}\end{smallmatrix}\right)
		w^a + 
	\left(\begin{smallmatrix} \sigma\\0_{n}\end{smallmatrix}\right)
	\right),
	\end{split}
\end{equation}
and
\begin{align}
m_{k,\sigma} = \min\Biggl\{
\lambda, \frac{\nu_{k,\sigma}}{\norm{w^a}^2}
\Biggr\}.
\end{align}
By~\eqref{eq:CompareDifferentfanda}, we have to find the minimum of the norms $\norm{\nu_{k,\sigma}}$ with respect to $k,\sigma$ in order to find the minimum or minima and their corresponding minimizers of the $E_{k,\sigma}$.
We first consider the following special case, where
\begin{align}\label{eq:special-case}
(fw)_i \notin 2\pi \Z - \pi  \quad \text{for all} \quad  i \in \{1,\ldots,m\}.
\end{align}
More precisely, in none of the cyclic data dimensions \(i=1,\ldots,m\) of \(f\in\mathcal X^{d}\), the scalar product with the weight \((fw)_i\) is an odd multiple of \(\pi\).
There exist $r_1,\ldots,r_m\in\mathbb Z$ such that $(fw)_i - 2\pi r_i \in (-\pi,\pi)$ for all $i \in \{1,\ldots,m\}$.
Then $\norm{\nu_{k,\sigma}}$ is minimal with respect to $k,\sigma$ if and only if 
$kw^a + \sigma = r$ where $r=(r_1,\ldots,r_m)^\tT$, cf.~\eqref{eq:defNuksigma}.
For each fixed matrix $\hat{k}\in\mathbb Z^{m\times t}$ there is a uniquely determined vector $\hat\sigma = \hat\sigma(\hat k)$ solving this system of linear equations.
Such a pair minimizes $\norm{\nu_{k,\sigma}}$ w.r.t. $k,\sigma$ and 
\begin{align}
\nu_{\hat{k},\hat{\sigma}
} = (fw)_{\mathcal X}.
\end{align}
Using~\eqref{eq:temp1} we obtain the corresponding minimizer w.r.t.\ \(x\) as
\begin{align}\label{eq:hatXpartCase}
\hat{x}^a_{\hat{k},\hat{\sigma}
} 
 &= f^a-2\pi
\begin{pmatrix}
	\hat k\\0_{n,t}
\end{pmatrix}
- ms(w^a)^\tT,
\end{align}
with $m,s$ as given in~\eqref{eq:form_s_and_m}.
Now there is precisely one $k^\ast$ with its corresponding \(\hat\sigma = \hat\sigma(k^\ast)\) such that $\hat{x}^a_{k^\ast,\hat{\sigma}
} \in [-\pi,\pi)^{m\times t}\times\mathbb R^{n\times t}$ which implies that
\begin{align}
x^\ast = \hat{x}^a_{k^\ast,\hat{\sigma}} = (f^a - ms(w^a)^\tT)_{\mathcal X}
\end{align}
is a minimizer of $\mathcal E_{\mathcal X}$
by~\eqref{eq:ReducedProblemInProxTheoProof}.
Concerning uniqueness we notice that $\sigma \mapsto \norm{\nu_{k,\sigma}}$ has
a unique minimizer.
By~\eqref{eq:CompareDifferentfanda} this implies that one may minimize w.r.t.\ $k,x$ in~\eqref{eq:ReducedProblemInProxTheoProof} choosing $\sigma = \hat{\sigma(k)}$ as previously in this proof.
This unique minimizer is $x^\ast$ since it is the only candidate with its first \(m\) components in $[-\pi,\pi)$. This finishes the special case~\eqref{eq:special-case}.

For the general case, let $G\subset\{1,\ldots,m\}$ be the set such that
\begin{align}
(fw)_i \in 2\pi \Z - \pi  \quad \text{for }\quad i\in G 
\end{align}
If $G$ is empty, we are in the case~\eqref{eq:special-case} we considered. Furthermore let \(G^C \coloneqq \{1,\ldots,m\}\backslash G\).
Hence for each $i \in G^C$ we find $r_i$ such that $(fw)_i - 2\pi r_i \in (-\pi,\pi)$ with the same arguments as for the case~\eqref{eq:special-case}.
For each $i \in G$ there is $r_i \in \Z$ such that
$(fw)_i= (2r_i-1)\pi$. 
Then $\nu_{k,\sigma}$ attains its smallest value exactly when
$k_i w^a + \sigma_i = r_i,$ for all $i \in G^C$
and when $k_i w^a + \sigma_i \in \{r_i,r_i-1\},$ for all $i\in G$.
Following the same steps as above, let us fix $\hat{k}$ and determine all $\hat{\sigma}_u = \hat\sigma(\hat{k},u)$, $u \in \{0,1\}^m, u_i=0$ for \(i\in G^C\), that fulfill one of the systems of equations
\begin{align}
\sigma(\hat{k},u) = r-\hat{k}w^a - u, \quad\text{where } r=(r_1,\ldots,r_m)^\tT
.
\end{align}
These are $2^{|G|}$ systems systems of equations, one for each value of \(u\), each having a unique solution.
By~\eqref{eq:CompareDifferentfanda}, we only  have to consider the functionals $E_{k,\sigma}$ corresponding
to such a pair of parameters $(\hat{k},\hat\sigma_u)$.
We get that the components of $\nu_{\hat k, \hat\sigma_u}$ with indices in $G^C$ are given as in~\eqref{eq:defNuksigma},
whereas the components with indices in $G$ are given by 
\begin{equation*}
(\nu_{\hat{k},\hat\sigma_u})_i = (-1)^{u_{i}+1}\pi.	
\end{equation*}
From $\nu_{\hat{k},\hat\sigma_u},$ we obtain $s_{\hat{k},\hat\sigma_u}$ again by using~\eqref{eq:form_s_and_m} for this special \(\nu\).
We furthermore notice that $m_{\hat{k},\hat\sigma_u}$ equals the $m$ given in~\eqref{eq:form_s_and_m} and is especially independent of the particular choice of $u$. Hence we get  
\begin{equation*}
	\hat x^a_{\hat{k},\hat\sigma_u}
	= f^a-2\pi
	\begin{pmatrix}
		\hat k\\0_{n,t}
	\end{pmatrix}
	 - s_{\hat{k},\hat\sigma_u} m(w^a)^\tT.
\end{equation*}
Now we follow the lines of case~\eqref{eq:special-case} to conclude that 
$ \hat x^a = (f^a - s \, m \,w^a)_{\mathcal X}$, where \(s\) is one of the $2^{|G|}$ instances of $\frac{[fw]_{\mathcal X}}{\norm{(fw)_{\mathcal X}}}$, and these are precisely the minimizers of ${\mathcal E}(x^a;f^a,w)$ w.r.t. \(x^a\) lying in $[-\pi,\pi)^{m \times t} \times \mathbb R^{n \times t}$. This completes the proof.
\end{proof}
%
%
\begin{figure*}[tbp]\centering
	\begin{subfigure}[t]{.48\textwidth}\centering
		\includegraphics{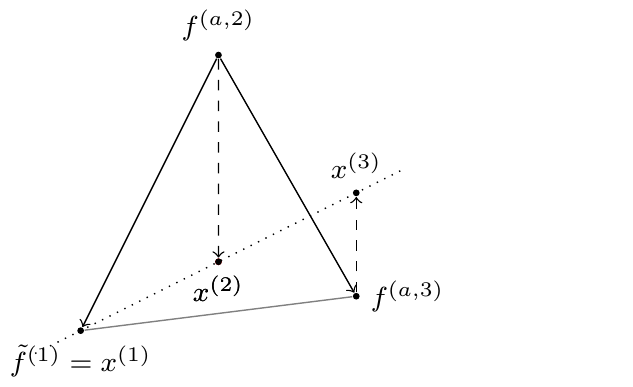}
		\caption{Minimizing \(\tfrac{1}{5}D_2 \leq \lambda\) with fixed \(f^{(1)}\).}
		\label{subfig:prox:fixed}
	\end{subfigure}
	\begin{subfigure}[t]{.48\textwidth}\centering
		\includegraphics{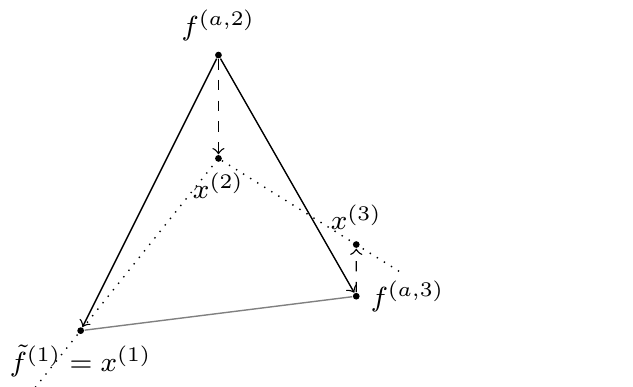}
		\caption{Minimizing \(\tfrac{1}{5}D_2 > \lambda\) with fixed \(f^{(1)}\).}
		\label{subfig:prox:fixed-2}
	\end{subfigure}
	\caption{Minimizing a second order difference in \(\mathbb R^2\), where \(f^{(1)}\) is fixed, i.e.\ \(x^{(1)} = f^{(1)}\) and \(A=\{2,3\}\) are the active data points. Again, two different values of \(\lambda\) are shown: (\subref{subfig:prox:fixed}) \(\lambda > \tfrac{1}{5}D_2(f;w)\), i.e.\ the corresponding proximal mapping reaches the minimum, (\subref{subfig:prox:fixed-2}) \(\lambda < \tfrac{1}{5}D_2(f;w)\), i.e.\ the corresponding proximal mapping is just a step reducing the value \(D_2(x;w) < D_2(f;w)\).}\label{fig:diff2-2d-fixed}
\end{figure*}
\begin{example}\label{ex:diff2-2d-fixed}
	We continue with the situation from Examples~\ref{ex:diff2-2d} and~\ref{ex:diff2-2d-prox}, but split the three points \(f=(f^{(1)}, f^{(2)}, f^{(3)})\) into an active and fixed part using \(A=\{2,3\}\). In other words \(\tilde f^{(1)}\) is fixed, we have \(x^{(1)} = \tilde f^{(1)}\), and both \(f^{(a,2)}\) and \(f^{(a,3)}\) are active, i.e.\ \(x^{(2)},x^{(3)}\) are affected by the restricted proximal map. The movement again depends on the chosen value for \(\lambda\) which is also illustrated in Figure~\ref{fig:diff2-2d-fixed}.
\end{example}

We also need proximal mappings for the data term \(F(x; f)\). 
\begin{proposition}\label{prop:data-prox}
	For \(f,g \in \mathcal X^k\), \(k=NM\), let
	\[
	\mathcal E (x; g,f)
	=
	\sum_{i=1}^k d_{\mathcal X}(g^{(i)},x^{(i)})^2 + \lambda d_{\mathcal X}(f^{(i)},x^{(i)})^2.
	\]
	Then, the minimizer(s) \(\hat x\) of \(\mathcal E\) are given by
	\[
	\hat x = \Bigl( \frac{g+\lambda f}{1+\lambda} + \frac{\lambda}{1+\lambda}2\pi v\Bigr)_{\mathcal X},
	\]
	where \(v = \bigl(v^{(i)}_{j}\bigr)_{i,j=1}^{k,n+m}\in\mathcal X^k\) is defined by
	\[
	v^{(i)}_j = \begin{cases}
	0 &\mbox{ if } j>m,\\
	0 &\mbox{ if } \abs{g_j^{(i)} - f_j^{(i)}} \leq \pi, j\leq m,\\
	\sgn(g_j^{(i)} - f_j^{(i)}) &\mbox{ if } \abs{g_j^{(i)} - f_j^{(i)}} > \pi, j\leq m.
	\end{cases}
	\]
\end{proposition}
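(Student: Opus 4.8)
The plan is to exploit the complete separability of the energy. Since the squared distance on $\mathcal X$ splits as $d_{\mathcal X}(x,y)^2 = \norm{x_{\mathbb R}-y_{\mathbb R}}^2 + \sum_{l=1}^m d_{\mathbb S^1}(x_l,y_l)^2$, the functional $\mathcal E(x;g,f)$ decouples into $k\,(n+m)$ independent scalar problems, one for each pixel index $i$ and each coordinate $j$. It therefore suffices to minimize, in a single coordinate, $h(x_j)=d(g_j,x_j)^2+\lambda\,d(f_j,x_j)^2$, where $d$ is the Euclidean distance on $\R$ for $j>m$ and the arc-length distance on $\mathbb S^1$ for $j\le m$. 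Reassembling the coordinatewise minimizers then yields the asserted formula, so the whole proof reduces to two one-dimensional computations.

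For the real coordinates $j>m$ the map $h$ is a strictly convex quadratic in $x_j\in\R$, and setting its derivative to zero gives the weighted arithmetic mean $\hat x_j=\frac{g_j+\lambda f_j}{1+\lambda}$. This is precisely the stated expression with $v_j=0$, so the real block requires nothing beyond this observation.

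The substantive part is the cyclic case $j\le m$, which computes the weighted Fr\'echet mean of two points on $\mathbb S^1$. First I would pass to the representation system, writing $g_j,f_j\in[-\pi,\pi)$, and choose the lift $\tilde f_j=f_j+2\pi v_j$ of $f_j$ nearest to $g_j$; that is, $v_j=0$ when $\abs{g_j-f_j}\le\pi$ and $v_j=\sgn(g_j-f_j)$ when $\abs{g_j-f_j}>\pi$, so that $\abs{g_j-\tilde f_j}=d_{\mathbb S^1}(g_j,f_j)\le\pi$. On the short geodesic arc joining $g_j$ and $f_j$ both summands of $h$ coincide with the lifted Euclidean terms $(x-g_j)^2$ and $\lambda(x-\tilde f_j)^2$, so $h$ restricted to this arc is a strictly convex quadratic with unique minimizer $\hat x=\frac{g_j+\lambda\tilde f_j}{1+\lambda}=\frac{g_j+\lambda f_j}{1+\lambda}+\frac{\lambda}{1+\lambda}2\pi v_j$, which lies between $g_j$ and $\tilde f_j$ and hence on the arc. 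Applying $(\cdot)_{2\pi}$ to return into $[-\pi,\pi)$ reproduces the claimed expression, and collecting all coordinates produces the bracket $(\cdot)_{\mathcal X}$.

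The one point demanding care, and the main obstacle, is showing that this arc-minimizer is globally optimal on the whole circle rather than only on the short arc, where $d(g_j,\cdot)^2$ and $d(f_j,\cdot)^2$ each have a kink at an antipodal cut point. I would dispatch this with the triangle inequality: writing $a=d_{\mathbb S^1}(g_j,x)$ and $b=d_{\mathbb S^1}(f_j,x)$, every $x\in\mathbb S^1$ obeys $a+b\ge d_{\mathbb S^1}(g_j,f_j)=:D$, so $h(x)=a^2+\lambda b^2$ is bounded below by the minimum of $a^2+\lambda b^2$ over $\{a,b\ge 0:a+b\ge D\}$, which a one-line Lagrange computation evaluates to $\frac{\lambda}{1+\lambda}D^2$ --- exactly the value attained at $\hat x$. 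Hence $\hat x$ is a global minimizer, and for $D<\pi$ equality in the triangle inequality pins the minimizer to the short arc, giving uniqueness. The borderline $\abs{g_j-f_j}=\pi$ of antipodal points is the sole degenerate case: there the inequality is saturated on both arcs and $h$ has two symmetric minimizers, of which the single-valued bracket $(\cdot)_{2\pi}$ returns one, consistent with the stated formula.
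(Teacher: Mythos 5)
Your proposal is correct, and its first step is exactly the paper's: the functional involves only squared distances, so it decouples into $k(n+m)$ independent scalar problems, the real coordinates yielding the weighted mean immediately. The difference is that the paper stops there and outsources both one\mbox{-}dimensional computations to Lemma~3.3 and Proposition~3.7 of~\cite{bergmann2014second}, whereas you give a self-contained treatment of the circle case. That part of your argument is sound: the lift $\tilde f_j = f_j + 2\pi v_j$ reproduces precisely the $v$ of the statement with $\abs{g_j-\tilde f_j}=d_{\mathbb S^1}(g_j,f_j)=:D$, and the global-optimality step --- bounding $h$ from below by $\min\{a^2+\lambda b^2: a+b\ge D,\ a,b\ge 0\}=\tfrac{\lambda}{1+\lambda}D^2$ via the triangle inequality and checking that the arc minimizer attains this value --- is a clean way to rule out competitors beyond the cut points without any case analysis on the kinks of $d(g_j,\cdot)^2$ and $d(f_j,\cdot)^2$; it also delivers uniqueness for $D<\pi$ and correctly isolates the antipodal configuration as the only multivalued case. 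What each approach buys: the paper's proof is a two-line citation and hence gives no insight here, while yours makes the proposition independent of the earlier reference at the cost of half a page. One small point worth flagging is in the statement rather than in your proof: in the antipodal case the single-valued bracket $(\cdot)_{\mathcal X}$ returns only one of the two symmetric minimizers, so to literally capture ``the minimizer(s)'' one would need the multivalued bracket $[\cdot]_{\mathcal X}$ there, in the spirit of the remark following Theorem~\ref{thm:proxy_w}; you observe this, and it is consistent with how the paper treats the analogous degeneracy for the difference terms.
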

\begin{proof}
We observe that the functional \(\mathcal E\) under consideration only involves squared distances.
This can be handled separately for every index \((i,j)\in\{1,\ldots,k\}\times\{1,\ldots,n+m\}\). Hence, the proposition follows from considering both the $\mathbb R$-valued data and the $\mathbb S^1$-valued data case separately. This has been done in \cite{bergmann2014second} in Lemma 3.3 and Proposition 3.7, respectively.
\end{proof}
%
%
%
\subsection{Cyclic proximal point algorithms}
\label{subsec:InpaintDenoiseCPPA}
%
%
%
The proximal mappings from Theorem~\ref{thm:proxy_w} can be efficiently
applied in parallel to compute minimizers of~%
\eqref{eq:Functional4InpaintingNoiselessCombCyclicVec} and~%
\eqref{eq:Functional4InpaintingNoisyCombCyclicVec} using a cycle length in the proximal point algorithm from Section~\ref{subsec:cppaGeneralExplanation} by splitting the functionals accordingly. 

\paragraph{A splitting for noiseless inpainting.}\par
Each summand in the first and second order differences in~%
\eqref{eq:Functional4InpaintingNoiselessCombCyclicVec} is incorporated into a proximal mapping using Theorem~\ref{thm:proxy_w} by setting the values affected by the subjection as fixed and keeping the remaining ones as active. 

If two summands act on distinct data, their proximal mappings can be computed in parallel. This reduces the cycle length \(c\) of the CPPA tremendously and provides an efficient, parallel implementation. In the following, we will split each of the summands
\begin{equation} \label{eq:splittingJ}
J(x) = \alpha \operatorname{TV}_{1}(x)
	+ \beta \operatorname{TV}_{2}(x)
	+ \gamma \operatorname{TV}_{1,1}(x) 
\end{equation}
into 
\begin{equation} \label{splitting_18}
J = \sum\nolimits_{l=1}^{18} J_l
\end{equation}
where the summands for the first order horizontal difference are split as
\begin{equation*}
	\begin{split}
		J_1+J_2 &\coloneqq\alpha_1 \sum_{(i,j)} D_1(x_{2i,j},x_{2i+1,j})
		\\&\quad +
		\alpha_1 \sum_{(i,j)} D_1(x_{2i+1,j},x_{2i+2,j}),
	\end{split}
\end{equation*}
which similarly yields \(J_3,J_4\) for the vertical first order difference and \(J_5,\ldots,J_8\) for the two diagonal sums.
The second order term is decomposed into three sums \(J_9\), \(J_{10}\), \(J_{11}\), given by 
\begin{align*}
   J_9   &\coloneqq \beta_1 \sum_{(i,j)} D_2(x_{3i-1,j},x_{3i,j},x_{3i+1,j}),\\
   J_{10}  &\coloneqq \beta_1 \sum_{(i,j)} D_2(x_{3i,j},x_{3i+1,j},x_{3i+2,j}),\\
   J_{11}  &\coloneqq \beta_1 \sum_{(i,j)} D_2(x_{3i+1,j},x_{3i+2,j},x_{3i+3,j}),
\end{align*}   
and analogously we obtain $J_{12},\ldots,J_{14}$ for the vertical second order difference. Splitting the term $\gamma \operatorname{TV}_{1,1}(x)$ in this manner yields $J_{15},\ldots,J_{18}$ as follows:
\begin{align*}
   J_{15}   &= \gamma \sum_{(i,j)} D_{1,1}(x_{2i,2j},x_{2i+1,2j},x_{2i,2j+1},x_{2i+1,2j+1}),\\
   J_{16}   &= \gamma \sum_{(i,j)} D_{1,1}(x_{2i+1,2j},x_{2i+2,2j},x_{2i+1,2j+1},x_{2i+2,2j+1}),\\
   J_{17}   &= \gamma \sum_{(i,j)} D_{1,1}(x_{2i,2j+1},x_{2i+1,2j+1},x_{2i,2j+2},x_{2i+1,2j+2}),\\
	&\begin{aligned}
		   \mathllap{J_{18}}= \gamma \sum_{(i,j)} D_{1,1}(x_{2i+1,2j+1},x_{2i+2,2j+1},\\[-.5\baselineskip]\hspace{10em} x_{2i+1,2j+2},x_{2i+2,2j+2}).
	\end{aligned}
\end{align*}
In any of these functions \(J_l, l\in\{1,\ldots,18\}\), any data point \(x_{i,j}\in\mathcal X\) is occurring at most once and hence all proximal maps a function \(J_l\) consists of can be evaluated in parallel. This leads to a cycle length of~\(c=18\).

\paragraph{A splitting for combined inpainting and denoising.}\par
In order to derive a cyclic proximal point algorithm for the combined inpainting and denoising
model~\eqref{eq:Functional4InpaintingNoiselessCombCyclicVec}, we encounter two
differences compared to the previous derivation: All data is always marked as
active because no index \((i,j)\in\Omega_0\) is restricted by a constraint and we further obtain a data term, i.e. we additionally have
\begin{equation}\label{eq:splitting-dataterm2DX}
	J_{19} = \sum\nolimits_{(i,j)\in\Omega^C} d_{\mathcal X}(x_{i,j},f_{i,j})^2
\end{equation}\enlargethispage{\baselineskip}
which can be evaluated in parallel using the proximal mapping given by Proposition~\ref{prop:data-prox}.

\paragraph{Initialization.}\par
In order to initialize the algorithm, we employ an the idea of unknown boundary used in~\cite{AF13},
which can easily be implemented during the first iterations of the CPPA: all unknown values \(x_{i,j}, (i,j) \in\Omega\) 
are initialized whenever setting a corresponding difference
\(D(x,b)=0\) yields a unique value for this pixel. Afterwards, 
this data item is set to be known and can be used to initialize other unknown pixel. Hence after at most~\(k=\max\{N,M\}\)
 iterations, all pixels are known.

The complete procedure for both models of noiseless and noisy inpainting is summarized in Algorithm~\ref{alg:CPPA}.
\begin{algorithm*}[tbp]
	\caption[]{\label{alg:CPPA} CPPA for minimizing~\eqref{eq:2DTVfunctional} for \((\mathbb S^1)^n\times R^m\) valued data}
	\begin{algorithmic}
		\State \textbf{Input} a sequence $\{ \lambda_k \}_k$ of positive values, cf.~\eqref{eq:CPPAlambda},
		\State parameters $\alpha = (\alpha_1,\alpha_2,\alpha_3,\alpha_4)$, $\beta = (\beta_1,\beta_2)$, $\gamma$,
		\State a set \(\Omega\subset\Omega_0\), and
		data \(f\in\mathcal X^{N\times M}\)\\\vspace{-.5\baselineskip}
		\Function {CPPA}{$\alpha$, $\beta$, $\gamma$, $\{ \lambda_k \}_k$, $f$}
		\State Initialize \(x_{i,j}^{(0)}=f_{i,j},\ (i,j)\in\Omega^C\), \(x_{i,j}^{(0)}\) as active, as unknown for \((i,j)\in\Omega\) and \(k=0\)
		\State Initialize the cycle length as \(c=18\) (noiseless case) or \(c=19\) (noisy case)
		\Repeat
		\For{$l \gets 1$ \textbf{to} $c$}
			\State \(x^{(k+\frac{l}{c})} \leftarrow \prox_{\lambda_k J_l}(x^{(k+\frac{l-1}{c})})\)
			\State employing Theorem~\ref{thm:proxy_w} and Proposition~\ref{prop:data-prox}
		\EndFor
		\State $k \gets k+1$
		\Until a convergence criterion are reached
		\State\Return $x^{(k)}$
		\EndFunction
	\end{algorithmic}
\end{algorithm*}
\subsection{Convergence Analysis}
\label{subsec:Convergence}
%
%
%
As typical when dealing with nonlinear
geometries, we show convergence under certain conditions.
These conditions are comparable with the ones employed in~\cite{bergmann2014second}.
In particular, the data is assumed to be dense enough in the sense quantified later on.
This means that the data has to be locally nearby which does not mean that circular data components
are (globally) restricted to certain sectors -- the data in these components my wrap around.
Similar restrictions on the nearness of data and even more severe restrictions requiring almost equidistant-data    
have been imposed in the analysis of nonlinear subdivision schemes;
see, e.g., \cite{WallnerDynCAGD,harizanov2011normal,WeinmannConstrApprox}.
As it is also pointed out in these references, the analysis is qualitative in the sense that empirically convergence is observed for a  significantly wider range of input data.

Compared with the pure denoising setup considered in~\cite{bergmann2014second}, 
there are several issues we have to deal with in the
inpainting situation here: first, the proof there
relies on the uniqueness of the minimizers in the unwrapped situation which is
not given for inpainting;
second, a main step in the aforementioned proof is based on bounding the distances
\(d_{\mathcal X}(f_{i,j},x_{i,j})\) for all pixels \((i,j)\), to get information on $x$, 
while for inpainting the values \(f_{i,j}\), \((i,j)\in\Omega\), are missing.
In addition we consider a more general data space.

We first discuss the conditions we impose for our convergence analysis.
Then we derive the necessary information to prove our main result formulated as
Theorem~\ref{thm:Convergence}. It states that both the algorithm proposed for
inpainting and the algorithm proposed for simultaneous inpainting and
denoising converge to a minimizer.

We employ the following notation to denote the distance on the first $m$ components of two data items $x,y \in \mathcal X.$
We notice that those are the \(\mathbb S^1\)-valued components.
We let
\[
d_{\mathcal X,m}(x,y) \coloneqq d_{(\mathbb S^1)^m}(x_{\mathbb S},y_{\mathbb S}).
\]
Our first condition is that the data $f\colon\Omega^C  \to \mathcal X$ given on the
complement of the inpainting region \(\Omega\) is \emph{dense enough} in the
sense that the distance between pixels and their neighbors in $\Omega^C$ is
sufficiently small (in the respective spherical components).

In order to give a precise definition of this we require some preparation first.
On the domain grid, we consider the  distance $d((i,j),(k,l))$ which is the length of the shortest path 
with respect to the eight-neighborhood (with diagonal distance $\sqrt{2}$) connecting the indices $(i,j)$ and $(k,l).$
We consider a covering of the image domain $\Omega_0$ with balls $\{B((i,j),r_{i,j}):(i,j) \in \Omega^C\}$ centered at pixels
(i$,j) \in \Omega^C$ in the complement of the inpainting area. Ne note that the radius of the ball $B((i,j),r_{i,j})$  
may vary with $(i,j) \in \Omega^C$ and that balls are formed w.r.t.\ the  distance $d((i,j),(k,l))$ introduced right before.
We require that the graph induced by this covering is connected. Here the induced graph is formed by using the indices in 
$\Omega^C$ as vertices and by connecting to vertices $(i,j)$ and $(k,l)$ whenever $(k,l) \in B((i,j),r_{i,j})$	
or vice versa meaning that $(i,j) \in B((k,l),r_{k,l}).$
	 
Using such a covering $\{B((i,j),r_{i,j}): (i,j) \in \Omega^C\},$ as well as the shorter notation $N_{i,j}= B((i,j),r_{i,j}),$  we let  	 
	\begin{equation}\label{eq:defSupNormOfDiff}
	d^\Omega_\infty(f)  = \max_{(i,j) \in \Omega^C} %
	\max_{(k,l) \in \mathcal N_{i,j} \cap \Omega^C} d_{\mathcal X, m}(f_{i,j},f_{k,l}).
	\end{equation}			
	We emphasize that imposing a bound on~\eqref{eq:defSupNormOfDiff} still allows for jumps which are not too high. 
	As already pointed out, such restrictions on the nearness of data are typical for the analysis of algorithms of nonlinear data in general; cf. \cite{WallnerDynCAGD,harizanov2011normal,WeinmannConstrApprox,bergmann2014second}.
	We note that $d^\Omega_\infty(f)$ depends on the chosen covering and that we suppress this dependence in the 
	notation. The above definition takes the inpainting region into account and only restricts the spherical components.
	It turns out that for the non-spherical linear space components, no restrictions are necessary, and that large distances in these components do \emph{not} influence the behavior in the spherical components \emph{negatively}.	
	When there is no inpainting region, i.e. $\Omega = \emptyset$, and we are in  the pure denoising situation,
	we fix the covering by fixing $N_{i,j}$ to be the eight-neighbourhood of $(i,j)$
	and use the notation
	\begin{equation*}
		d_\infty(f) = d^{\,\emptyset}_\infty(f). 
	\end{equation*}
	Using this notation, we assume that (i) the quantity $d^\Omega_\infty(f)$ is small enough;
	precise bounds on $d^\Omega_\infty(f)$ are given in the lemmas and theorems later on.

Our second requirement is that (ii) the parameters~$\alpha, \beta, \gamma$ are
sufficiently small.
For large parameters, solutions become almost constant which
is often undesired and causes an interpretation problem, e.g.\ when the
original data is equally distributed around the circle.
Finally, we require (iii) that the parameter sequence $\{\lambda_k\}_k$ of the CPPA 
fulfills~\eqref{eq:CPPAlambda} with a small $\ell^2$ norm. The latter
can be achieved by rescaling the parameter sequence.
  
Our analysis is based on an unwrapping procedure which means that we `lift
the whole setup' to the universal covering of~%
$\mathcal X$ which we denote by $\mathcal Y = \mathbb R^{n+m}$.

	Similar to the notation $d_{\mathcal X,m},$ we use
	\[
	\lvert x- y\rvert_{\mathcal Y,m} \coloneqq \norm{(x_j)_{j=1}^m - (y_j)_{j=1}^m}
	\]
	to denote the distance on the first \(m\) components of the data $x,y \in \mathcal Y$.

Universal coverings stem from algebraic topology. We refer
to~\cite{hatcher2002algebraic} for an introduction. A covering consists of 
a covering space and a canonical projection (inducing discrete fibers).
We here explicitly consider the canonical projections $\pi_x$ which are for $x \in \mathcal X$ given by
\[
   \pi_x(y) = \pi_x(y_\mathbb S,y_\mathbb R) = (\exp_{x_\mathbb S}(y_\mathbb S),y_\mathbb R),\quad y\in\mathcal Y,
\]
i.e.\ the linear space components remain unchanged and the cyclic components
undergo the exponential mapping component-wise meaning that
\[
\exp_{x_\mathbb S}(y_\mathbb S) = (\exp_{x_1}(y_1),\ldots,\exp_{x_m}(y_m)).
\]
It is well known that continuous mappings to the base space have a lifting to the covering space. The lifting is uniquely determined by specifying $\pi^{-1}(x)$ for only one $x$. This lifting construction also applies to discrete mappings
$g\colon\Omega_0 \to \mathcal X$  defined on the rectangular grid $\Omega_0$ whenever $d_\infty(g)<\pi.$ We record this observation for further use.

\begin{lemma}\label{lem:LiftingOnWholeDomain}
	Let $g\colon\Omega_0 \to \mathcal X$ be an image with $d_\infty(g) < \pi$,
	and consider $q \in \mathcal X$ fulfilling~%
	\(d_{\mathbb S^1}(q_i, (g_{1,1})_i)<\pi\), \(i=1,\ldots,m\), i.e.\ no pair of
	cyclic data components is antipodal.
	We choose $\tilde g_{1,1} \in \mathcal Y$ such
	that~\(\pi_q(\tilde g_{1,1} ) = g_{1,1}\).
	Then there exists a unique lifted image \(\tilde g\colon\Omega_0 \to \mathcal Y \) such that 
	$\pi_q \tilde g = g$ holds component-wise and $d_\infty(g)<\pi.$
\end{lemma}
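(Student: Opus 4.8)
The plan is to peel off the trivial structure first and reduce everything to the scalar circle. Since the canonical projection $\pi_q$ leaves the $n$ linear coordinates untouched and acts coordinate-wise on the $m$ cyclic ones, any lift is forced to coincide with $g$ on its real part, so there is nothing to prove there, while the cyclic part decouples into $m$ independent $\mathbb S^1$-valued lifting problems, one per circle factor. I would therefore fix a single cyclic coordinate $i\in\{1,\dots,m\}$ and regard $g$ as an $\mathbb S^1$-valued image. The hypothesis $d_{\mathbb S^1}(q_i,(g_{1,1})_i)<\pi$ guarantees that $(\tilde g_{1,1})_i \coloneqq \exp_{q_i}^{-1}((g_{1,1})_i)$ is the unique preimage in $(-\pi,\pi)$, and this value serves as the prescribed base point of the lift, matching the choice $\pi_q(\tilde g_{1,1})=g_{1,1}$ in the statement.

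Next I would realise $\Omega_0$ as the rectangle $[1,N]\times[1,M]$ with its natural CW structure (pixels as vertices, horizontal/vertical neighbour pairs as edges, unit cells as $2$-faces) and produce a continuous map $G\colon[1,N]\times[1,M]\to\mathbb S^1$ extending $g$. On each edge I join the two pixel values by the unique minimizing geodesic, which exists precisely because $d_\infty(g)<\pi$ makes neighbours non-antipodal, so the edge increment is the reduced difference lying in $(-\pi,\pi)$. As the rectangle is simply connected, the classical lifting theorem for the universal covering $\exp_{q_i}\colon\mathbb{R}\to\mathbb S^1$ then yields a unique continuous $\tilde G$ with $\exp_{q_i}\circ\tilde G=G$ and $\tilde G(1,1)=(\tilde g_{1,1})_i$; restricting $\tilde G$ to the vertices gives the lift in coordinate $i$. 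Collecting the $m$ cyclic coordinates and appending the unchanged real part produces $\tilde g\colon\Omega_0\to\mathcal Y$ with $\pi_q\tilde g=g$ holding component-wise.

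The step that genuinely carries the content, and the one I expect to be the main obstacle, is the continuous extension of $G$ over the $2$-cells: a unit square admits a continuous filling into $\mathbb S^1$ exactly when the loop traced by its four boundary geodesics is null-homotopic, i.e.\ when the four reduced edge differences sum to $0$ rather than to $\pm 2\pi$. This is where the density bound must really be invoked, via a winding-number argument around each elementary square, and it is precisely the scalar $\mathbb S^1$ estimate underlying Proposition~2.5 of~\cite{bergmann2014second}; my intention is to cite that scalar result (exactly as in the proof of Lemma~\ref{lem:RepresentationCombinedDifferences}) rather than to reprove it, since it already encodes that neighbouring lifted values stay within distance $\pi$. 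Once every local winding number is seen to vanish the fillings exist, $\tilde G$ is well defined, and the desired property $\lvert \tilde g_{i,j}-\tilde g_{k,l}\rvert_{\mathcal Y,m}<\pi$ for neighbours follows. Uniqueness is then inherited from the uniqueness clause of the lifting theorem: any two lifts agreeing at $(1,1)$ coincide on the connected domain, and the coordinate-wise splitting transports this to the product space $\mathcal Y=\mathbb{R}^{n+m}$.
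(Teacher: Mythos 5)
The paper never actually proves this lemma: it is ``recorded'' as an observation immediately after the remark that continuous maps into the base space lift uniquely to the universal covering, so you are supplying an argument the authors omit. Most of your architecture is sound --- the reduction to the $m$ scalar circle factors, the triviality of the linear components, the geodesic interpolation along edges, and the uniqueness argument (which, incidentally, can be run purely combinatorially by propagating the reduced increments along paths in the grid graph, without ever constructing the continuous extension).

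The genuine gap sits exactly at the step you yourself flag as carrying the content, and it cannot be closed under the stated hypothesis. You need the boundary loop of every elementary cell to be null-homotopic, and $d_\infty(g)<\pi$ does not give this. Take $m=1$ and a single unit square with values $0$, $\pi/2$, $\pi-\varepsilon$, $3\pi/2-\varepsilon$ at its four corners: all six pairwise arc-length distances --- including both diagonals, i.e.\ the full eight-neighborhood condition --- are strictly below $\pi$, yet the four reduced edge increments are $\pi/2$, $\pi/2-\varepsilon$, $\pi/2$, $\pi/2+\varepsilon$ and sum to $2\pi$. The loop winds once around $\mathbb S^1$, the $2$-cell admits no continuous filling, and indeed no admissible lift exists: starting from $\tilde g_{1,1}=0$ the neighbor conditions force $\tilde g_{2,1}=\pi/2$ and $\tilde g_{2,2}=\pi-\varepsilon$, after which no representative of $3\pi/2-\varepsilon$ is simultaneously within $\pi$ of $0$ and of $\pi-\varepsilon$. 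Proposition~2.5 of \cite{bergmann2014second} cannot supply the missing step: it identifies the absolute cyclic difference with $\norm{(xw)_{2\pi}}$ and says nothing about winding numbers of quadrilateral loops, so citing it here carries no weight. What is true is that the winding numbers are forced to vanish once each increment is below $\pi/2$ (for the four-cycles) resp.\ $2\pi/3$ (for the triangles created by the diagonals), so the lemma holds under the stronger hypothesis $d_\infty(g)<\pi/2$ --- and this is the only regime in which the paper ever invokes it, e.g.\ the bound $3\pi/8$ obtained in \eqref{eq:EstX} within the proof of Lemma~\ref{lem:ConvLem2}. As stated, however, both the lemma and your winding-number claim are false; your proof must either strengthen the hypothesis or actually establish the null-homotopy assertion, which at present it only announces.
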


Next, we lift the inpainting functionals and derive relations between the
lifted and not lifted functionals and lifted and non lifted discrete functions.
To precisely formulate these relations we need some preparation.

For \( \delta>0\) and data $f\colon\Omega^C \to \mathcal X$ given on the complement of
the inpainting region $\Omega$, we consider the class
${\mathcal S}^\Omega(f,\delta)$ of grid functions $x$ defined on the whole domain $\Omega_0,$
which we define by
\begin{align}
{\mathcal S}^\Omega(f,\delta)
&=
\bigl\{ 
x\colon\Omega_0 \to \mathcal X
:\,
e_{\infty}(x,f) \leq \delta
\bigr\},\nonumber\\
\intertext{where}
\label{eq:def_e}
e_{\infty}(x,f) &\coloneqq \max_{(i,j) \in \Omega_0} d_{\mathcal X,m}(x_{i,j},f_{\nu(i,j)}),
\end{align}
and the mapping
\begin{equation}
\begin{split}
	\label{eq:DefNearestNeigh}
	\nu\colon\Omega_0 \to \Omega^C \text{ assigns }& (i,j) \in \Omega_0\\
	& \text{ a nearest neighbor in } \Omega^C.
\end{split}
\end{equation}
Here we measure the vicinity with respect to the distance on the grid induced
by taking shortest paths with respect to the eight-neighborhood.
The $x$ specified this way are `near' to the images $f$ on \(\Omega^C\) and
do not vary too much in \(\Omega\).
We also need an extension operator $E$ extending a function \(f\) defined on
$\Omega^C$ to a function \(E(f)\) defined on $\Omega_0$.
A particularly simple extension operator is the nearest neighbor operator $E_\nu$, defined, for all $(i,j),$ by
\begin{equation}\label{eq:DefEn}
E_\nu f_{i,j} = f_{\nu(i,j)}  
\end{equation}
with $\nu$ as in~\eqref{eq:DefNearestNeigh}.
Then there is a constant $B_\nu$, independent of $f$ but dependent on $\Omega$,
such that 
\begin{equation}\label{eq:ExtensionEst}
d_\infty(E_\nu(f)) \leq B_\nu(\Omega) d^\Omega_\infty(f).
\end{equation}
We now consider the `lifted' inpainting functionals.
We first note that we may write the problem~\eqref{eq:Functional4InpaintingNoiselessCombCyclicVec}
in the form~\eqref{eq:Functional4InpaintingNoisyCombCyclicVec} modifying the distance term to be infinite if
$x \neq f$ on $\Omega^C.$ Then, on the universal covering space $\mathcal Y$ of $\mathcal X,$ 
the inpainting functional $\tilde J_\Omega$ reads
\begin{equation}\label{eq:Functional4InpaintingCovering}
\begin{split}
\tilde J_{\Omega}(x) &=
\widetilde{F}_{\Omega^C}(x; \tilde f)
+ \alpha \widetilde{\operatorname{TV}}_{1}(x)
\\&\qquad + \beta \widetilde{\operatorname{TV}}_{2}(x)
+ \gamma \widetilde{\operatorname{TV}}_{1,1}(x),
\end{split}
\end{equation}
where \(\tilde f\colon\Omega^C \to\mathcal Y\) is a lifted image of~\(f\).
The functionals $\widetilde{F},$  $\widetilde{\operatorname{TV}}_1,$
$\widetilde{\operatorname{TV}}_{2}$ $\widetilde{\operatorname{TV}}_{1,1}$
are given by the corresponding functionals of Section~\ref{subsec:ModelVectorSpace} (there denoted without tilde) using $\mathcal Y$ as the underlying vector space.
We get the following relations:
\begin{lemma}\label{lem:ConvLem2}
	Let~$f\colon\Omega^C \to \mathcal X$
	with~\( d^\Omega_\infty(f) < \frac{\pi}{8 B_\nu(\Omega)}\) be given and let $q \in \mathcal X $ be a point not antipodal to $f_{\nu(1,1)}$ in any cyclic component. Choose a point $\tilde f_{\nu(1,1)}$ with $\pi_q(\tilde f_{\nu(1,1)}) = f_{\nu(1,1)}$ and let \( \tilde f \) denote the lifting of
	\( E_\nu f \).
		
	Then every  $x \in {\mathcal S}^\Omega(f,\frac{\pi}{8})$ has a unique
	lifting~$\tilde{x}$ w.r.t.\ the base point $q$ fulfilling
	$|\tilde x_{\nu(1,1)} - \tilde f_{\nu(1,1)}|_{\mathcal Y,m} \le \frac{\pi}{8}$.
	Furthermore,
		\begin{equation} \label{eq:LiftJ}
		J_\Omega(x)= \tilde{J}_\Omega(\tilde{x}) \quad \mbox{for all} \quad x \in {\mathcal S}^\Omega(f,\frac{\pi}{8}),
		\end{equation}
	where $J_\Omega$ either denotes the functional from~\eqref{eq:Functional4InpaintingNoiselessCombCyclicVec}
	or from~\eqref{eq:Functional4InpaintingNoisyCombCyclicVec}
	and $\tilde J_\Omega$ is its analogue in $\mathcal Y$ by~\eqref{eq:Functional4InpaintingCovering}.
\end{lemma}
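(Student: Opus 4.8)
The plan is to establish Lemma~\ref{lem:ConvLem2} in two stages: first construct the unique lifting $\tilde{x}$ for each $x \in {\mathcal S}^\Omega(f,\frac{\pi}{8})$, and then verify that lifting preserves the value of the functional. For the existence and uniqueness of the lifting, I would invoke Lemma~\ref{lem:LiftingOnWholeDomain}. The key point is to verify its hypothesis, namely that the discrete image $x$ has $d_\infty(x) < \pi$ with respect to the \emph{eight}-neighborhood covering used in the pure denoising case. To see this, fix two neighboring pixels $(i,j)$ and $(k,l)$. By the triangle inequality on each spherical component,
\begin{equation*}
d_{\mathcal X,m}(x_{i,j},x_{k,l}) \le d_{\mathcal X,m}(x_{i,j},f_{\nu(i,j)}) + d_{\mathcal X,m}(f_{\nu(i,j)},f_{\nu(k,l)}) + d_{\mathcal X,m}(f_{\nu(k,l)},x_{k,l}).
\end{equation*}
The first and third terms are each bounded by $e_\infty(x,f) \le \frac{\pi}{8}$ since $x \in {\mathcal S}^\Omega(f,\frac\pi8)$. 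For the middle term, the nearest neighbors $\nu(i,j)$ and $\nu(k,l)$ are at bounded grid-distance, so one chains together at most $B_\nu(\Omega)$ consecutive differences of $E_\nu f$, each controlled by $d^\Omega_\infty(f)$; using~\eqref{eq:ExtensionEst} this middle term is at most $B_\nu(\Omega)\, d^\Omega_\infty(f) < \frac{\pi}{8}$ by the standing assumption $d^\Omega_\infty(f) < \frac{\pi}{8 B_\nu(\Omega)}$. Summing the three bounds gives $d_{\mathcal X,m}(x_{i,j},x_{k,l}) < \frac{3\pi}{8} < \pi$, so $d_\infty(x)<\pi$ and Lemma~\ref{lem:LiftingOnWholeDomain} applies, yielding a unique lifting once the base point normalization $|\tilde x_{\nu(1,1)} - \tilde f_{\nu(1,1)}|_{\mathcal Y,m} \le \frac\pi8$ is imposed.

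Next I would show that the functional is preserved, i.e.~\eqref{eq:LiftJ}. The strategy is to argue term by term, since both $J_\Omega$ and $\tilde J_\Omega$ are sums of a data term and finite-difference TV terms. Each TV summand is built from an absolute difference $D(\cdot\,;w)$ with $w \in \{b_1,b_2,b_{1,1}\}$ evaluated on a small cluster of neighboring pixels. By Lemma~\ref{lem:RepresentationCombinedDifferences}, the cyclic contribution to each such difference is $\norm{(x_{\mathbb S}w)_{2\pi}}$. The crucial observation is that, for the lifted image, $\tilde x_{\mathbb S} w$ already lies in the correct fundamental domain: because all pixels involved in a single difference are mutually close (their pairwise spherical distances are strictly less than $\pi$ by the estimate above), the modulo operation $(\cdot)_{2\pi}$ acts trivially, so $\norm{(x_{\mathbb S}w)_{2\pi}} = |\tilde x_{\mathbb S} w|$ on $\mathcal Y$. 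Hence each lifted TV difference equals its downstairs counterpart, and summing over all differences gives equality of the full TV parts.

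For the data term, I would use that $\pi_q \tilde x = x$ holds component-wise and that $\pi_q\tilde f = E_\nu f$ restricts to $f$ on $\Omega^C$; since on $\Omega^C$ both $\tilde x$ and $\tilde f$ lie near the same fiber (again by the $\frac{\pi}{8}$-type closeness), the exponential map is a local isometry there and $d_{\mathcal X}(x_{i,j},f_{i,j}) = |\tilde x_{i,j} - \tilde f_{i,j}|_{\mathcal Y}$ for each $(i,j) \in \Omega^C$. The noiseless case~\eqref{eq:Functional4InpaintingNoiselessCombCyclicVec} is handled uniformly by writing it in the form~\eqref{eq:Functional4InpaintingNoisyCombCyclicVec} with the data term replaced by the indicator of the constraint $x = f$ on $\Omega^C$; the constraint is preserved under lifting precisely because the exponential map is injective on the relevant neighborhood, so $x_{i,j} = f_{i,j}$ upstairs is equivalent to $\tilde x_{i,j} = \tilde f_{i,j}$. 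Combining the data-term and TV-term identities yields~\eqref{eq:LiftJ}.

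I expect the main obstacle to be the bookkeeping in the middle step of the neighbor estimate: one must argue carefully that chaining nearest-neighbor assignments through the covering balls keeps the accumulated spherical distance under control, which is exactly what the constant $B_\nu(\Omega)$ in~\eqref{eq:ExtensionEst} encapsulates, and that the resulting bound of $\frac{3\pi}{8}$ is safely below $\pi$ so that the modulo operation is inactive on \emph{every} finite-difference cluster, including the four-pixel clusters appearing in $D_{1,1}$. Verifying this last point for the second-order and mixed differences --- where up to four pixels and weights of magnitude $2$ appear --- requires checking that the relevant argument $x_{\mathbb S}w$ still avoids wrapping, which is where the smallness assumption on $d^\Omega_\infty(f)$ is genuinely used.
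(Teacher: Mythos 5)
Your construction of the lifting and your treatment of the TV terms follow the paper's proof essentially verbatim: the same triangle-inequality bound $d_{\mathcal X,m}(x_{i,j},x_{k,l}) \le \frac{2\pi}{8} + B_\nu(\Omega)\, d^\Omega_\infty(f) < \frac{3\pi}{8}$, the same appeal to Lemma~\ref{lem:LiftingOnWholeDomain}, and the same observation that on each finite-difference cluster the modulo operation is inactive. The caveat you raise at the end about the weight of magnitude $2$ in $b_2$ is resolved exactly as you suspect: writing $x^{(1)}-2x^{(2)}+x^{(3)}$ as a sum of two consecutive first differences bounds each component of $\tilde x_{\mathbb S}w$ by $\frac{3\pi}{4}<\pi$, and likewise for $b_{1,1}$; this is what the paper does when it notes that all members of a triple lie within $\frac{\pi}{2}$ of each other.

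The genuine gap is in the data term (and, equivalently, in the constraint for the noiseless model). You assert that on $\Omega^C$ both $\tilde x$ and $\tilde f$ lie near the same fiber ``by the $\frac{\pi}{8}$-type closeness'' and conclude $d_{\mathcal X}(x_{i,j},f_{i,j}) = \lvert\tilde x_{i,j}-\tilde f_{i,j}\rvert$. But the downstairs bound $d_{\mathcal X,m}(x_{i,j},f_{\nu(i,j)})\le\frac{\pi}{8}$ only controls the projections; the two liftings are each propagated independently from the anchor pixel $\nu(1,1)$ across the grid, so a priori $\tilde x_{i,j}$ and $\tilde f_{\nu(i,j)}$ could drift apart by a nonzero multiple of $2\pi$ in some cyclic component at a pixel far from the anchor. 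Local injectivity of the exponential map does not rule this out; a monodromy argument does. The paper writes $\tilde r_{i,j} = r_{i,j} + 2\pi k_{i,j}$ with $k_{\nu(1,1)}=0$, shows $\lvert \tilde r_{i,j+1}-\tilde r_{i,j}\rvert \le \frac{\pi}{4}$ whereas a change of $k$ between neighboring pixels would force a jump of at least $2\pi - \frac{\pi}{4}$, and concludes by connectedness of the grid that all $k_{i,j}=0$. Your proof needs this chaining step (or an equivalent induction along grid paths) to close; the same argument is what legitimizes the claimed equivalence of $x_{i,j}=f_{i,j}$ and $\tilde x_{i,j}=\tilde f_{i,j}$ on $\Omega^C$ in the noiseless case.
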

\begin{proof}
	Let us consider $x \in {\mathcal S}^\Omega(f,\frac{\pi}{8}).$
	For $(k,l)$ in the eight-neighborhood of $(i,j),$ we have
	\begin{align}
			d_{\mathcal X,m}(x_{i,j}, x_{k,l}) 
		& \leq d_{\mathcal X,m}(x_{i,j}, f_{\nu(i,j)})
		\\&\qquad
			+ d_{\mathcal X,m}(f_{\nu(i,j)}, f_{\nu(k,l)})
		\\&\qquad
			+ d_{\mathcal X,m}(f_{\nu(k,l)}, x_{k,l})\nonumber\\
		& \leq \frac{2\pi}{8} + B_\nu(\Omega) \ d^\Omega_\infty(f)
			< \frac{3\pi}{8}.\label{eq:EstX}
	\end{align}
	By assumption, we
	have~$d_{\mathcal X,m}(x_{\nu(1,1)},f_{\nu(1,1)}) \leq \frac{\pi}{8}$.
	\\Therefore, every $x \in {\mathcal S}(f,\tfrac{\pi}{8})$ has a unique
	lifting~$\tilde{x}$ by Lemma~\ref{lem:LiftingOnWholeDomain} w.r.t.\ to the
	base point~$q$ fulfilling~$\norm{\tilde x_{1,1} - \tilde f_{1,1}}
	\leq \frac{\pi}{8}$.
	
	In order to show~\eqref{eq:LiftJ} we show equality for each of the
	involved summands. First, we consider $\operatorname{TV}_1$.
	By Lemma~\ref{lem:LiftingOnWholeDomain} we have 
	\(d_{\mathcal X,m}(x_{i,j},x_k)%
		= \lvert \tilde x_{i,j} - \tilde x_k\rvert_{\mathcal Y,m}%
		,\ k\in\{(i,j+1),(i+1,j),(i+1,j+1)\}\).
	Hence the definitions of $\operatorname{TV}_1$
	and~$\operatorname{\widetilde{TV}}_1$ imply
	$\operatorname{TV}_1(x) = \operatorname{\widetilde{TV}}_1(\tilde{x})$.
	Concerning second order differences, we first consider  
	the expressions~$D_2(x_{i-1,j},x_{i,j},x_{i+1,j}).$
	Similar to~\eqref{eq:EstX}, we have
	\(d_{X,m}(x_{i-1,j},x_{i+1,j}) < \frac{\pi}{2} \)
	which implies that the distance between any two members of the triple is
	smaller than $\frac{\pi}{2}$.	
	Due to the properties of the lifting~\(\tilde x\) this implies
	\(D(\tilde x_{i-1,i},\tilde x_{i,j}, \tilde x_{i+1,j}; b_2) < \pi\).
	The same argument applies to $D_2(x_{i,j-1},x_{i,j},x_{i,j+1})$
	which yields the equality for the $\operatorname{TV}_2$ terms.
	A similar argument shows that \( \operatorname{TV}_{1,1}(x)
	= \operatorname{\widetilde{TV}}_{1,1}(\tilde{x})\).
	Concerning the data term \(F(x;f)\) we define
	\(r_{i,j} = d_{\mathcal X,m}(x_{i,j},f_{\nu(i,j)})\)
	and 
	\(\tilde r_{i,j} =%
	\lvert \tilde x_{i,j} - \tilde f_{\nu(i,j)}\rvert_{\mathcal Y,m}\).
	We show that $r_{i,j}$ and $\tilde r_{i,j}$ agree for any $(i,j) \in \Omega^C$.
	By definition of ${\mathcal S}^\Omega(f,\delta)$, we have
	\(r_{i,j} \leq \frac{\pi}{8}\) for all $(i,j) \in \Omega_0.$
	Furthermore, by the construction of
	\(\tilde f\) and \(\tilde x\) it holds \(\tilde r_{i,j} = r_{i,j} + 2\pi k_{i,j}\),
	with \(k_{i,j}\in\mathbb N\) and \(k_{\nu(1,1)} = 0\). 
	We estimate
	\( \abs{\tilde r_{i,j+1}-\tilde r_{i,j}} =
	\abs[\big]{%
		\lvert \tilde x_{i,j+1} - \tilde f_{\nu(i,j+1)}\rvert_{\mathcal Y,m}
	-
		\lvert\tilde x_{i,j} - \tilde f_{\nu(i,j)}\rvert_{\mathcal Y,m}
	} \leq \frac{\pi}{4}\).
	If \(k_{i,j}\neq k_{i,j+1}\), then there exists
	\(k\in\mathbb Z\backslash\{0\}\)
	such that
	\[
		\abs{\tilde r_{i,j+1}-\tilde r_{i,j}}
		= \abs{\tilde r_{i,j+1}-\tilde r_{i,j} + 2\pi k}
		\geq 2\pi- \tfrac{\pi}{4} > \tfrac{\pi}{4}.
	\]
	This is a contradiction and therefore $k_{i,j}= k_{i,j+1}$.
	Similarly we conclude
	$k_{i,j}= k_{i+1,j}$.
	Hence,
	\(k_{i,j} = k_{\nu(1,1)} = 0\) for all \((i,j)\in \Omega_0\)
	which implies
	\(r_{i,j}=\tilde r_{i,j}\) for all $(i,j) \in \Omega^C$
	and completes the proof.
\end{proof}

To formulate the next lemma we need the quantity $d^\Omega_1$
for functions defined on the complement of the inpainting region.
We consider $f\colon\Omega^C \to \mathcal X,$ and define $d^\Omega_1(f)$
in analogy to~\eqref{eq:defSupNormOfDiff} by 
\begin{equation}\label{eq:defD1}
d^\Omega_1(f) = 
	\sum_{(i,j) \in \Omega^C}
	\max_{(k,l) \in \mathcal N_{i,j} \cap \Omega^C}
	d_{\mathcal X,m}(f_{i,j},f_{k,l}),
\end{equation}
with $\mathcal N_{i,j}$ being again the eight-neighborhood of \((i,j)\).
For the nearest neighbour extension operator $E_\nu$ defined in~\eqref{eq:DefEn}
we have the following estimate: there is a constant $C_\nu$, independent of $f$
but dependent on $\Omega$ (and on $\Omega_0$), such that
\begin{equation}\label{eq:ExtensionEst2}
d_1(E_\nu(f)) \coloneqq d^\emptyset_1(E_\nu (f)) \leq C_\nu(\Omega) d^\Omega_1(f).
\end{equation}

	As a further preparation, we need the following observation.  
	We consider the pure inpainting functional \eqref{eq:Functional4InpaintingNoiselessCombCyclicVec}.
	We let $u^*$ be a minimizer of \eqref{eq:Functional4InpaintingNoiselessCombCyclicVec} for given data $f.$ 
	Then there is 	
	a constant $B'_\nu(\Omega) \geq 1$ which depends on $\Omega,$ but not of $f,$ such that  
	\begin{equation}\label{eq:ds}
	d_{\mathcal X,m}(u^\ast_{i,j},f_{i,j})  \leq B'_\nu(\Omega) \ d^\Omega_\infty(f).
	\end{equation}
	For functions $f$  with small values $d^\Omega_\infty(f),$ this estimate follows from the boundedness of second differences 
	by first differences. For the remaining $f,$ the estimate \eqref{eq:ds} follows from the boundedness
	of the sphere $\mathbb S^1$ as a set. For vector space valued data, the boundedness of second differences 
	by first differences implies the estimate  for arbitrary input.	
	We note that we intentionally choose the symbol $u^*$ to avoid confusion when applying \eqref{eq:ds}.
	
\begin{lemma}\label{lem:ConvLem1}
	Let $\varepsilon>0$ and consider $f\colon\Omega^C \to \mathcal X$ with 
	$d^\Omega_\infty(f) < \tfrac{\varepsilon}{4 B'_\nu(\Omega)} $ .	
	We define
	\(p \coloneqq \max\{\alpha_1,\ldots,\alpha_4,\beta_1,\beta_2,\gamma\} \).
	 and assume that $p$ is so small that 
	\begin{align}
		\label{eq:EstAlpaBeta}
		d^\Omega_1(f) \leq \frac{\varepsilon^2}{8^2 \cdot 20 p C_\nu(\Omega) B'_\nu(\Omega)^2}	
	\end{align}
	where $C_\nu(\Omega)$ is given by~\eqref{eq:ExtensionEst2}.
	Then any minimizer~$x^\ast$ of the inpainting functional $J_\Omega$ given
	in~\eqref{eq:Functional4InpaintingNoiselessCombCyclicVec}
	or~\eqref{eq:Functional4InpaintingNoisyCombCyclicVec} fulfills
	\begin{equation}
		\label{eq:ResultLemma1}
		e_{\infty}(x^\ast,f)\leq \varepsilon.
	\end{equation}
\end{lemma}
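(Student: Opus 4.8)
The plan is to treat the two functionals separately and, for the noisy model, to split the estimate into a bound on $\Omega^C$, where the data term acts, and a bound on the inpainting region $\Omega$, where only the regularizer acts; throughout I would use the product structure $\mathcal X=(\mathbb S^1)^m\times\mathbb R^n$, under which the data term \eqref{eq:DataTerm4Inpainting} and every absolute difference split into a spherical and a linear part, the latter being invisible to $e_\infty$. For the noiseless functional \eqref{eq:Functional4InpaintingNoiselessCombCyclicVec} the claim is then immediate: a minimizer $x^\ast$ is a noiseless inpainting minimizer for $f$, so \eqref{eq:ds} and the hypothesis $d^\Omega_\infty(f)<\varepsilon/(4B'_\nu(\Omega))$ give $e_{\infty}(x^\ast,f)\le B'_\nu(\Omega)\,d^\Omega_\infty(f)<\varepsilon/4<\varepsilon$, using no smallness of $p$.

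For the noisy functional \eqref{eq:Functional4InpaintingNoisyCombCyclicVec} the first step is a functional-value comparison tailored to decouple the two kinds of components. I would test optimality of $x^\ast$ against the competitor $w$ that keeps the linear part $x^\ast_{\mathbb R}$ of $x^\ast$ but replaces its spherical part by the nearest-neighbor extension $E_\nu f$ of $f$, cf.\ \eqref{eq:DefEn}. Since $w$ agrees with $f$ on $\Omega^C$ in the spherical components and shares the linear part of $x^\ast$, the linear data term and the linear contributions to all differences cancel; applying $\sqrt{a^2+b^2}\ge b$ on the left and $\sqrt{a^2+b^2}\le a+b$ on the right of $J_\Omega(x^\ast)\le J_\Omega(w)$ then leaves $F^{\mathbb S}(x^\ast)\le R_{\mathbb S}(E_\nu f)$, where $F^{\mathbb S}$ is the data term on $\Omega^C$ and $R_{\mathbb S}$ the regularizer, both evaluated only on the spherical components $x_{\mathbb S}$. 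A count of the seven difference families, bounding each second-order and mixed difference by two first-order ones, gives $R_{\mathbb S}(E_\nu f)\le 20\,p\,d_1(E_\nu f)$, and \eqref{eq:ExtensionEst2} turns this into $20\,p\,C_\nu(\Omega)\,d^\Omega_1(f)$. Inserting \eqref{eq:EstAlpaBeta} yields $F^{\mathbb S}(x^\ast)\le \varepsilon^2/(64\,B'_\nu(\Omega)^2)$, whence $d_{\mathcal X,m}(x^\ast_{i,j},f_{i,j})\le \varepsilon/(4\sqrt2\,B'_\nu(\Omega))$ for all $(i,j)\in\Omega^C$.

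The second step carries this bound into $\Omega$. Because $F_{\Omega^C}$ is independent of the values on $\Omega$, the minimizer $x^\ast$ is, on the whole grid, itself a noiseless inpainting minimizer for the boundary data $g\coloneqq x^\ast|_{\Omega^C}$. Hence \eqref{eq:ds} applied to $g$ gives $d_{\mathcal X,m}(x^\ast_{i,j},x^\ast_{\nu(i,j)})\le B'_\nu(\Omega)\,d^\Omega_\infty(g)$ for every $(i,j)\in\Omega_0$, while the triangle inequality together with the $\Omega^C$-bound from the first step estimates $d^\Omega_\infty(g)\le 2\varepsilon/(4\sqrt2\,B'_\nu(\Omega))+d^\Omega_\infty(f)$. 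Combining through $d_{\mathcal X,m}(x^\ast_{i,j},f_{\nu(i,j)})\le d_{\mathcal X,m}(x^\ast_{i,j},x^\ast_{\nu(i,j)})+d_{\mathcal X,m}(x^\ast_{\nu(i,j)},f_{\nu(i,j)})$ makes the factors $B'_\nu(\Omega)$ cancel, and the constants chosen in \eqref{eq:EstAlpaBeta} and in the hypothesis on $d^\Omega_\infty(f)$ push the right-hand side strictly below $\varepsilon$ uniformly in $(i,j)$, which is \eqref{eq:ResultLemma1}.

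\textbf{Main obstacle.} I expect the crux to be the first step: ensuring that the unrestricted, possibly large linear variation of the data does not enter the spherical estimate. The device that achieves this is the competitor with matching linear part, so that the linear terms cancel \emph{exactly} before the elementary $\sqrt{a^2+b^2}$ inequalities are applied; this is the quantitative counterpart of the remark preceding the lemma that large linear-space distances do not affect the spherical behaviour. The remaining work---the combinatorial factor $20$ and the tracking of the $B'_\nu(\Omega)$ factors so that they cancel between the two steps---is routine but is precisely what dictates the constants appearing in \eqref{eq:EstAlpaBeta}.
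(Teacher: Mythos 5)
Your proof is correct and its skeleton coincides with the paper's: the same triangle-inequality decomposition of $d_{\mathcal X,m}(x^\ast_{i,j},f_{\nu(i,j)})$ through the nearest-neighbor extension of $x^\ast|_{\Omega^C}$, the same functional-value comparison producing the factor $20\,p\,C_\nu(\Omega)\,d^\Omega_1(f)$, and the same application of \eqref{eq:ds} to the noiseless problem with boundary data $x^\ast|_{\Omega^C}$ to control the inpainting region. The one genuine deviation is your choice of competitor: the paper simply tests against $g=E_\nu f$ on all of $\mathcal X$ and then invokes $(1,1,1,1)\operatorname{TV}_1(g)\le 4\,d_1(g)$, which silently discards the linear-space contributions to $\operatorname{TV}_1(g)$ (the quantity $d_1$ in \eqref{eq:defD1} is built from $d_{\mathcal X,m}$ and sees only the spherical components), whereas your hybrid competitor --- keeping $x^\ast_{\mathbb R}$ and replacing only the spherical part by $(E_\nu f)_{\mathbb S}$ --- makes the linear data term and the linear parts of the differences cancel exactly before the $\sqrt{a^2+b^2}$ estimates are applied. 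This buys a fully rigorous justification of the paper's informal claim that large linear-space variations do not contaminate the spherical estimate, at the cost of slightly different (but still sufficient) constants, e.g.\ $\varepsilon/(4\sqrt 2\,B'_\nu(\Omega))$ in place of $\varepsilon/(8B'_\nu(\Omega))$ on $\Omega^C$; your separate, one-line treatment of the noiseless model via \eqref{eq:ds} alone is also a legitimate simplification of the paper's unified argument.
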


%
\begin{proof}
	For a minimizer \(x^*\) of $J_\Omega$ given
	by~\eqref{eq:Functional4InpaintingNoiselessCombCyclicVec}
	or by~\eqref{eq:Functional4InpaintingNoisyCombCyclicVec}, 
	we consider $x'$ which we define as the closest point extension $E_\nu$ of the restriction $x^\ast|_{\Omega^C}$ 
	to the non-inpainting region $\Omega^C.$
	In view of  the definition of $e_\infty$ in~\eqref{eq:def_e} we consider the estimate
	\begin{equation}\label{eq:triangle4dec}
		\begin{split}
		d_{\mathcal X,m}(x^\ast_{i,j},f_{\nu(i,j)})
			&\leq d_{\mathcal X,m}(x^\ast_{i,j},x'_{i,j})\\
			&\qquad + d_{\mathcal X,m}(x'_{i,j},f_{\nu(i,j)}).
		\end{split}
	\end{equation}
	We start with the second summand on the right hand side and notice that 
	$x'_{i,j} = x^\ast_{i,j}$ for $(i,j) \in \Omega^C$
	which implies $d_{\mathcal X,m}(x'_{i,j},f_{i,j}) = d_{\mathcal X,m}(x^*_{i,j},f_{i,j})$ 
	for all $(i,j) \in \Omega^C.$
	We extend the data $f$ given on $\Omega^C$ to a grid function $g$ defined on
	$\Omega_0$ by setting~$g_{i,j} = (E_\nu f)_{i,j} = f_{\nu(i,j)}$ for all $(i,j)\in\Omega$. We get the estimate	
	\begin{align}\label{eq:EstJOfMin}
	J_\Omega(x^*) \leq  J_\Omega(g)
	& = \alpha \operatorname{TV}_{1}(g)
	+ \beta \operatorname{TV}_{2}(g)
	+ \gamma \operatorname{TV}_{1,1}(g).
	\end{align}
	We further estimate the right hand side of~\eqref{eq:EstJOfMin}: since
	the second order differences may be estimated by two times the first order
	differences, we get
	$\beta \operatorname{TV}_{2}(g) \leq 2 (\max_i \beta_i)(1,1,0,0) \operatorname{TV}_{1}(g),$
	and an analogous inequality for $\gamma \operatorname{TV}_{1,1}(g)$.
	Hence
	\begin{align}\label{eq:EstJOfg}
	 J_\Omega(g)
		\leq 5 \max\{\alpha_1,\ldots,\alpha_4,\beta_1,\beta_2,\gamma\} \ (1,1,1,1) \operatorname{TV}_{1}(g).
	\end{align}
	Next, we estimate each summand appearing in $\operatorname{TV}_{1}(g)$ by
	the corresponding summand in $d_1(g)$ to conclude
	that~$(1,1,1,1) \operatorname{TV}_{1}(g) \leq 4 d_1(g)$.
	We use~\eqref{eq:ExtensionEst} to get 
	\begin{align}\label{eq:EstJEnd}
	J_\Omega(g)	\leq 20 p d_1(g)  \leq 20 p C_\nu(\Omega) d^\Omega_1(f).
	\end{align}
	As a consequence we obtain for~$(i,j) \in \Omega^C$
	\begin{equation}
		\label{eq:EstST}
		\begin{split}
		d_{\mathcal X,m}(x'_{i,j},f_{i,j})^2
		&=	d_{\mathcal X,m}(x_{i,j}^*,f_{i,j})^2 
		\leq J_\Omega(x^*)  \\
		&	\leq  J_\Omega(g)
			\leq 20 p C_\nu(\Omega) d^\Omega_1(f)\\
		&	\leq (\tfrac{\varepsilon}{8 B'_\nu(\Omega) })^2 
			\leq (\tfrac{\varepsilon}{8})^2.
		\end{split}
	\end{equation}
	By the definition of $x',$
	this implies that for all $(i,j) \in \Omega_0,$ we have
	\begin{equation}\label{eq:ds1}
	d_{\mathcal X,m}(x'_{i,j},f_{\nu(i,j)})	\leq \tfrac{\varepsilon}{8}.
	\end{equation}
	Next, we look at the first summand in~\eqref{eq:triangle4dec}. 	
	Since $x'_{i,j} = x^\ast_{i,j}$ for $(i,j) \in \Omega^C,$
    we may restrict to estimate 
    $d_{\mathcal X,m}(x'_{i,j},x^\ast_{i,j})$ on $\Omega.$
    For this purpose, we consider the pure inpainting problem \eqref{eq:Functional4InpaintingNoiselessCombCyclicVec}
    for the inpainting region $\Omega$ for data $x^\ast$.	
    Using \eqref{eq:ds} with data $f=x^\ast$ we have that 
	\begin{equation}\label{eq:ds2}
		\begin{split}
			&d_{\mathcal X,m}(x'_{i,j},x^\ast_{i,j})\\
			&\ \ \leq B'_\nu(\Omega) \ d^\Omega_\infty(x')\\
         &\ \ \leq B'_\nu(\Omega) (d^\Omega_\infty(f) + 
         2 \max_{(i,j) \in \Omega} d_{\mathcal X,m}(x'_{i,j},f_{\nu(i,j)}))
			\\
			&\ \ \leq \tfrac{\varepsilon}{2}.
		\end{split}
  	\end{equation}
   	For the second inequality we used \eqref{eq:EstST}.	
	Combining the estimates~\eqref{eq:ds1} and~\eqref{eq:ds2}, we get
	\begin{equation*}
		\begin{split}
			d_{\mathcal X,m}(x^\ast_{i,j},f_{\nu(i,j)})
		&\leq d_{\mathcal X,m}(x^\ast_{i,j},x'_{i,j})
		+ d_{\mathcal X,m}(x'_{i,j},f_{\nu(i,j)})
		\\
		&\leq \tfrac{\varepsilon}{2}+\tfrac{\varepsilon}{2}
		= \varepsilon
	\end{split}
	\end{equation*}
	which implies that~\[e_{\infty}(x^\ast,f) = \max_{(i,j) \in \Omega_0}
	\{d_{\mathcal X,m}(x^\ast_{i,j},f_{\nu(i,j)})\} \leq \varepsilon\]
	and this finishes the proof. 
\end{proof}
\begin{lemma}\label{lem:TildeJ}
	The statement from Lemma~\ref{lem:ConvLem1} does also hold for data $f\colon\Omega^C \to \mathcal Y$
	and the inpainting functionals $\tilde J_\Omega$ given by~\eqref{eq:Functional4InpaintingCovering}.
\end{lemma}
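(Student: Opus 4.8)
The plan is to reproduce the proof of Lemma~\ref{lem:ConvLem1} almost verbatim, but carried out directly in the vector space $\mathcal Y = \mathbb R^{n+m}$, systematically replacing the spherical distance $d_{\mathcal X,m}$ by the linear distance $\lvert \cdot - \cdot\rvert_{\mathcal Y,m}$ on the first $m$ components. Concretely, I would first re-read the defining quantities $d^\Omega_\infty$, $d^\Omega_1$ from \eqref{eq:defSupNormOfDiff}, \eqref{eq:defD1} and $e_\infty$ from \eqref{eq:def_e} with $\lvert\cdot\rvert_{\mathcal Y,m}$ in place of $d_{\mathcal X,m}$, and let $\tilde J_\Omega$ from \eqref{eq:Functional4InpaintingCovering} play the role of $J_\Omega$. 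Under the same smallness hypotheses on $d^\Omega_\infty(f)$ and on $p = \max\{\alpha_1,\ldots,\alpha_4,\beta_1,\beta_2,\gamma\}$ (now expressed through the $\mathcal Y$-densities), the claim is that every minimizer $\tilde x^\ast$ of $\tilde J_\Omega$ satisfies $e_\infty(\tilde x^\ast, f)\le\varepsilon$.

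I would then verify that each ingredient of the original argument survives the transfer. The proof of Lemma~\ref{lem:ConvLem1} relies only on (a) the triangle inequality, (b) the optimality bound $\tilde J_\Omega(\tilde x^\ast)\le \tilde J_\Omega(g)$ for the nearest-neighbour competitor $g = E_\nu f$, (c) the estimate of the second-order differences by twice the first-order differences, (d) the extension estimates \eqref{eq:ExtensionEst} and \eqref{eq:ExtensionEst2}, and (e) the bound \eqref{eq:ds} for the pure inpainting problem. Here (a) holds because $\lvert\cdot\rvert_{\mathcal Y,m}$ is a genuine norm; (b) is immediate from optimality; (c) is a trivial consequence of the linearity of finite differences in $\mathcal Y$ (e.g.\ $\lvert x-2y+z\rvert \le \lvert x-y\rvert + \lvert y-z\rvert$); and (d) holds with the \emph{same} combinatorial constants $B_\nu(\Omega)$, $C_\nu(\Omega)$, since these depend only on the grid geometry of $\Omega$ and $\Omega_0$, not on the data space. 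Consequently the entire chain \eqref{eq:EstJOfMin}--\eqref{eq:ds2} reproduces line by line, and the estimate of the second summand in \eqref{eq:triangle4dec} yields the $\lvert\cdot\rvert_{\mathcal Y,m}$-bound of $\tfrac{\varepsilon}{8}$ exactly as before.

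The main obstacle --- in fact the only point needing care --- is ingredient (e). In the spherical setting, \eqref{eq:ds} for the pure-inpainting minimizer had to be argued in two regimes (small differences via boundedness of second by first differences, large differences via boundedness of $\mathbb S^1$ as a set). In $\mathcal Y$ this dichotomy disappears: as already noted in the text preceding Lemma~\ref{lem:ConvLem1}, for vector-space data the boundedness of second-order differences by first-order differences gives \eqref{eq:ds} for \emph{arbitrary} input, with a constant $B'_\nu(\Omega)\ge 1$ depending only on $\Omega$. Applying this with data $\tilde x^\ast$ on the inpainting region, exactly as in \eqref{eq:ds2}, controls the first summand of \eqref{eq:triangle4dec}, and combining the two bounds gives $e_\infty(\tilde x^\ast,f)\le\varepsilon$. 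So the vector-space case is, if anything, simpler than the spherical one: no near-/far-field case split and no antipodality issues arise.
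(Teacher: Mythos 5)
Your proposal is correct and is essentially what the paper does: its own proof of this lemma consists of the single remark that the statement follows along the lines of the proof of Lemma~\ref{lem:ConvLem1}, which is exactly the transfer you carry out (and you rightly note that the only delicate ingredient, the estimate~\eqref{eq:ds}, is in fact simpler in $\mathcal Y$ since boundedness of second differences by first differences holds for arbitrary vector-space input). Your write-up just makes explicit the details the paper leaves implicit.
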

\begin{proof}
	The statement is obtained following the lines of the proof of Lemma~\ref{lem:ConvLem1}.
\end{proof}
Now we combine Lemma~\ref{lem:ConvLem2} and~\ref{lem:ConvLem1} to locate
the minimizers of $J$ and $\tilde J$. 
This part of the proof is rather similar to \cite{bergmann2014second}
which is the reason for streamlining it.

\begin{lemma} \label{lem:ConvLem3}
	Consider $\varepsilon$ with $0<\varepsilon<\frac{\pi}{8}$ and 
	let the function $f\colon\Omega^C \to \mathcal X$
	fulfill both $d^\Omega_\infty(f) <\frac{\pi}{8 B_\nu(\Omega)}$
	as well as 	$d^\Omega_\infty(f) < \tfrac{\varepsilon}{4 B'_\nu(\Omega)}$ 	
	and assume that the parameters $\alpha,\beta,\gamma$ of $J_\Omega$
	from~\eqref{eq:Functional4InpaintingNoiselessCombCyclicVec}
	or~\eqref{eq:Functional4InpaintingNoisyCombCyclicVec}  
	fulfill ~\eqref{eq:EstAlpaBeta} w.r.t.\ $\varepsilon$.

	Then any minimizer $x^\ast$ of $J_\Omega$ lies
	in~\({\mathcal S}^\Omega(f,\tfrac{\pi}{8}) \).
	Furthermore, denote by $\tilde{f}$ the unique lifting of $f$ w.r.t.\ a base
	point~$q$ and fixed~$\tilde{f}_{\nu(1,1)}$
	with~$\pi_q (\tilde{f}_{\nu(1,1)}) = f_{\nu(1,1)}$.
	Then each minimizer~$y^\ast$ of~$\tilde{J}_\Omega$ defines a
	minimizer $x^\ast\coloneqq\pi_q(y^\ast)$ of~$J_\Omega$.
	Conversely, the uniquely defined lifting $\tilde{x}^\ast$ of a
	minimizer~$x^\ast$ of $J_\Omega$ is a minimizer of $\tilde{J}_\Omega$.
\end{lemma}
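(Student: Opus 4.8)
The first assertion is essentially a restatement of Lemma~\ref{lem:ConvLem1}: under the hypotheses $d^\Omega_\infty(f)<\tfrac{\varepsilon}{4B'_\nu(\Omega)}$ and the parameter bound~\eqref{eq:EstAlpaBeta} with respect to the chosen $\varepsilon$, that lemma yields $e_\infty(x^\ast,f)\le\varepsilon$ for every minimizer $x^\ast$ of $J_\Omega$, and since $\varepsilon<\tfrac{\pi}{8}$ this is exactly $x^\ast\in{\mathcal S}^\Omega(f,\tfrac{\pi}{8})$. I would dispatch this in one line and then set up the correspondence between minimizers from two ingredients. The first is Lemma~\ref{lem:ConvLem2}, which shows that the canonical lifting $x\mapsto\tilde x$ anchored by $\lvert\tilde x_{\nu(1,1)}-\tilde f_{\nu(1,1)}\rvert_{\mathcal Y,m}\le\tfrac{\pi}{8}$ is value-preserving, i.e.\ $J_\Omega(x)=\tilde J_\Omega(\tilde x)$ on the whole class ${\mathcal S}^\Omega(f,\tfrac{\pi}{8})$. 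The second is the one-sided inequality $J_\Omega(\pi_q(y))\le\tilde J_\Omega(y)$ valid for \emph{every} grid function $y\colon\Omega_0\to\mathcal Y$: the projection $\pi_q$ is componentwise surjective, each cyclic absolute difference $D(\,\cdot\,;w)$ is by definition a minimum over integer shifts and hence no larger than the Euclidean difference of any lift, and the analogous term-by-term monotonicity holds for the data term. Note that this inequality is an equality precisely when $y$ is the anchored lift of a point of ${\mathcal S}^\Omega(f,\tfrac{\pi}{8})$, which is the regime covered by Lemma~\ref{lem:ConvLem2}.

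The converse direction is then clean and needs no separate existence argument. Given a minimizer $x^\ast$ of $J_\Omega$, the first assertion places it in ${\mathcal S}^\Omega(f,\tfrac{\pi}{8})$, so its anchored lift $\tilde x^\ast$ satisfies $\tilde J_\Omega(\tilde x^\ast)=J_\Omega(x^\ast)$. For an arbitrary competitor $y$ I would chain $\tilde J_\Omega(y)\ge J_\Omega(\pi_q(y))\ge J_\Omega(x^\ast)=\tilde J_\Omega(\tilde x^\ast)$, using first the one-sided inequality and then the minimality of $x^\ast$ applied to $\pi_q(y)$. This shows $\tilde x^\ast$ minimizes $\tilde J_\Omega$.

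For the forward direction I would invoke Lemma~\ref{lem:TildeJ}, which transports the localization of Lemma~\ref{lem:ConvLem1} to the lifted problem: a minimizer $y^\ast$ of $\tilde J_\Omega$ obeys $e_\infty(y^\ast,\tilde f)\le\varepsilon<\tfrac{\pi}{8}$ and is therefore automatically anchored near $\tilde f$. Consequently $x^\ast:=\pi_q(y^\ast)$ lies in ${\mathcal S}^\Omega(f,\tfrac{\pi}{8})$, the function $y^\ast$ is exactly its anchored lift, and Lemma~\ref{lem:ConvLem2} gives $J_\Omega(x^\ast)=\tilde J_\Omega(y^\ast)$. Comparing with any minimizer $\bar x$ of $J_\Omega$ (which exists by continuity of $J_\Omega$ together with compactness of the $(\mathbb S^1)^m$-components and control of the $\mathbb R^n$-components via the data term, respectively the boundary constraint and $\operatorname{TV}_1$), the converse direction already proved shows that $\tilde{\bar x}$ minimizes $\tilde J_\Omega$, whence $\tilde J_\Omega(y^\ast)\le\tilde J_\Omega(\tilde{\bar x})=J_\Omega(\bar x)=\min J_\Omega$; thus $J_\Omega(x^\ast)=\min J_\Omega$ and $x^\ast$ is a minimizer. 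The main obstacle will be exactly this forward direction: because the projection inequality only goes one way, a minimizer of the lifted problem could a priori lower its value by "wrapping around", and ruling this out is precisely what Lemma~\ref{lem:TildeJ} accomplishes, forcing $y^\ast$ into the region where the lifting is an honest value-preserving bijection.
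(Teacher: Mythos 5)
Your proposal is correct and rests on the same pillars as the paper's proof: the first assertion is dispatched via Lemma~\ref{lem:ConvLem1} exactly as in the paper, the correspondence of minimizers uses the value-preserving anchored lifting of Lemma~\ref{lem:ConvLem2} on ${\mathcal S}^\Omega(f,\tfrac{\pi}{8})$, and Lemma~\ref{lem:TildeJ} is used to localize minimizers of $\tilde J_\Omega$ in $\widetilde{\mathcal S}^\Omega(\tilde f,\tfrac{\pi}{8})$, where the lifting is a bijection. The one genuine deviation is your converse direction: you introduce the global one-sided inequality $J_\Omega(\pi_q(y))\le\tilde J_\Omega(y)$, valid for \emph{every} $y\colon\Omega_0\to\mathcal Y$ (correct, since each cyclic difference is a minimum over integer shifts and $w\in\{b_1,b_2,b_{1,1}\}$ has integer entries, so $\norm{(y_{\mathbb S}w)_{2\pi}}\le\norm{y_{\mathbb S}w}$, and likewise for the data term), and compare $\tilde x^\ast$ against \emph{all} competitors. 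The paper instead compares only against competitors in $\widetilde{\mathcal S}^\Omega(\tilde f,\tfrac{\pi}{8})$ via~\eqref{eq:LiftJ} and then invokes Lemma~\ref{lem:TildeJ} to upgrade the restricted minimality to global minimality. Your route buys a converse direction that is independent of Lemma~\ref{lem:TildeJ} and does not rely on knowing a priori where the lifted minimizers sit; the paper's route is more symmetric between the two directions and avoids introducing the extra inequality. Both arguments implicitly need existence of a minimizer (of $J_\Omega$ in your forward direction, respectively of $\tilde J_\Omega$ in the paper's converse direction); you at least flag this, which the paper does not. One small point to tighten: in the forward direction, Lemma~\ref{lem:TildeJ} gives $\lvert y^\ast_{i,j}-\tilde f_{\nu(i,j)}\rvert_{\mathcal Y,m}\le\varepsilon$ at \emph{every} pixel, i.e.\ $y^\ast\in\widetilde{\mathcal S}^\Omega(\tilde f,\tfrac{\pi}{8})$; it is this global containment, via the bijection ${\mathcal S}^\Omega(f,\delta)\to\widetilde{\mathcal S}^\Omega(\tilde f,\delta)$, and not merely the anchoring at $\nu(1,1)$, that makes $y^\ast$ the coherent lift of $\pi_q(y^\ast)$ -- your closing remark shows you understand this, but the phrase ``automatically anchored near $\tilde f$'' undersells it.
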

\begin{proof}
	By Lemma~\ref{lem:ConvLem1}, any minimizer~$x^\ast$ of the inpainting
	functional $J_\Omega$ fulfills $e_{\infty}(x^\ast,f) \leq \varepsilon.$
	Since  $\varepsilon< \tfrac{\pi}{8},$ we
	get~$x^\ast \in {\mathcal S}^\Omega(f,\tfrac{\pi}{8})$.
	For the second statement we notice that the
	mapping~$x \mapsto \tilde{x}$ is a bijection
	from~\({\mathcal S}^\Omega(f,\delta) \) to the
	set~\({\widetilde {\mathcal S}}^\Omega(\tilde f,\delta) 
	\coloneqq \{y\colon\Omega_0 \to \mathcal Y \colon|y_{i,j}-\tilde f_{\nu(i,j)}|_{\mathcal Y,m}<\delta \}.
	\)
	for every $\delta$ with $0<\delta\leq \tfrac{\pi}{8}$.
	In particular, we may choose $\delta = \tfrac{\pi}{8}.$
	If $y^\ast\in\mathcal Y$ is a minimizer of $\tilde{J}_\Omega$, it lies
	in~${\widetilde {\mathcal S}}^\Omega(\tilde f,\tfrac{\pi}{8})$ by
	Lemma~\ref{lem:TildeJ}. By~\eqref{eq:LiftJ} and the minimizing property
	of~$y^\ast$ we obtain for any~$x \in {\mathcal S}^\Omega(f,\tfrac{\pi}{8})$ that
	\(
	J_\Omega(\pi_q(y^\ast)) = \tilde{J}_\Omega(y^\ast) \leq \tilde{J}_\Omega(\tilde{x}) = J_\Omega(x).
	\)
	As a consequence, $\pi_q(y^\ast)$ is a minimizer of~$J_\Omega$
	on~${\mathcal S}^\Omega(f,\tfrac{\pi}{8})$.
	By Lemma~\ref{lem:ConvLem1} all the minimizers of $J_\Omega$ are contained
	in~${\mathcal S}^\Omega(f,\tfrac{\pi}{8})$ and hence $\pi_q(y^\ast)$ is a minimizer
	of~$J_\Omega$
	.
	For the last statement let~$x^\ast$ be a minimizer of~$J_\Omega$.
	For its lifting~$\tilde{x}^\ast$ and
	any~$\tilde y \in \widetilde{S}^\Omega(\tilde f,\tfrac{\pi}{8})$, we get
	\(
		\tilde{J}_\Omega(\tilde{x}^\ast) = J_\Omega (x^\ast)
		\leq  J_\Omega(\pi_q(\tilde y)) = \tilde{J}_\Omega(\tilde y).
	\)
	Thus,~$\tilde{x}^\ast$ is a minimizer of~$\tilde{J}_\Omega$
	on~${\widetilde {\mathcal S}}^\Omega(\tilde f,\tfrac{\pi}{8})$.
	Since by Lemma~\ref{lem:TildeJ} all minimizers of $\tilde{J}$ lie
	in~${\widetilde {\mathcal S}}^\Omega(\tilde f,\tfrac{\pi}{8})$, the last assertion follows.
\end{proof}

After establishing relations between these functionals and their lifted versions,
grid functions and data, we next formulate a convergence result for vector space data in $\mathcal Y.$
It is a reformulation of a convergence result which can be found in~\cite{Bac13a} for the more general class of Hadamard spaces or which can be derived from~\cite{Ber10}.
\begin{theorem} \label{thm:bacak}
Let $J = \sum_{l=1} ^c J_l$, with each $J_l$ being a proper, closed, convex
functional on $\mathcal Y^{\Omega_0}$ and assume that $J$ has a global
minimizer. Assume further that there is $L >0$ such that the iterates $\{x^{(k+\frac{l}{c})} \}$ of the CPPA, cf. Algorithm~\ref{alg:CPPA}, fulfill
\[
J_l(x^{(k)}) - J_l (x^{(k+\frac{l}{c})}) \le L \norm[\big]{x^{(k)} - x^{(k+\frac{l}{c})}}, \quad l=1,\ldots,c,
\]
for all $k\in \mathbb N_0$.
Then the sequence $\{x^{(k)} \}_k$ converges to a minimizer of $J$.
In particular, the iterates fulfill
\begin{align} \label{noch_2}
\norm[\big]{x^{(k+\frac{l-1}{c})} - x^{(k+\frac{l}{c})}} \leq 2 \lambda_k L,
\end{align}
and, for all $x \in \mathcal Y^{\lvert \Omega_0\rvert },$ 
\begin{equation}
		\begin{split}
	\norm[\big]{x^{(k+1)}-x}^2
	&\leq
	\norm[\big]{x^{(k)}-x}^2
	\\&\qquad - 2 \lambda_k
		\bigl(J(x^{(k)})-J(x)\bigr)
	\\&\qquad
		+ 2 \lambda_k^2 L^2 c(c+1) \label{noch_1}.
		\end{split} 
	\end{equation}
\end{theorem}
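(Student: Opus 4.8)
The plan is to reduce the statement to two ingredients: the subgradient (optimality) characterization of each proximal step, and a quasi-Fej\'er monotonicity argument driven by the step-size conditions~\eqref{eq:CPPAlambda}. Since $\mathcal Y^{\Omega_0}$ is a finite-dimensional Hilbert space and each $J_l$ is proper, closed and convex, I would first record that $y=\prox_{\lambda J_l}(z)$ is equivalent to $\tfrac1\lambda(z-y)\in\partial J_l(y)$, so that the CPPA increment satisfies $x^{(k+\frac{l-1}{c})}-x^{(k+\frac{l}{c})}=\lambda_k g_l$ with $g_l\in\partial J_l(x^{(k+\frac{l}{c})})$. Testing the subgradient inequality at the input point $x^{(k+\frac{l-1}{c})}$ gives the descent estimate $\norm{x^{(k+\frac{l-1}{c})}-x^{(k+\frac{l}{c})}}^2\le\lambda_k(J_l(x^{(k+\frac{l-1}{c})})-J_l(x^{(k+\frac{l}{c})}))$, while testing it at an arbitrary $x$ and expanding $\norm{x^{(k+\frac{l}{c})}-x}^2$ with the polarization identity yields the fundamental per-step inequality
\begin{equation*}
\norm{x^{(k+\frac{l}{c})}-x}^2\le\norm{x^{(k+\frac{l-1}{c})}-x}^2-2\lambda_k(J_l(x^{(k+\frac{l}{c})})-J_l(x)).
\end{equation*}

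Next I would establish the step bound~\eqref{noch_2}: combining $g_l\in\partial J_l(x^{(k+\frac{l}{c})})$ with the hypothesis $J_l(x^{(k)})-J_l(x^{(k+\frac{l}{c})})\le L\norm{x^{(k)}-x^{(k+\frac{l}{c})}}$ bounds $\norm{g_l}$ and hence gives $\norm{x^{(k+\frac{l-1}{c})}-x^{(k+\frac{l}{c})}}=\lambda_k\norm{g_l}\le2\lambda_kL$. Summing the fundamental inequality over $l=1,\dots,c$ telescopes the squared distances to
\begin{equation*}
\norm{x^{(k+1)}-x}^2\le\norm{x^{(k)}-x}^2-2\lambda_k\sum_{l=1}^c(J_l(x^{(k+\frac{l}{c})})-J_l(x)).
\end{equation*}
The discrepancy between $\sum_l J_l(x^{(k+\frac{l}{c})})$ and $J(x^{(k)})=\sum_l J_l(x^{(k)})$ is exactly what the hypothesis is designed to control: replacing each $J_l(x^{(k+\frac{l}{c})})$ by $J_l(x^{(k)})$ costs at most $L\norm{x^{(k)}-x^{(k+\frac{l}{c})}}$, and the triangle inequality together with~\eqref{noch_2} gives $\norm{x^{(k)}-x^{(k+\frac{l}{c})}}\le\sum_{j=1}^l\norm{x^{(k+\frac{j-1}{c})}-x^{(k+\frac{j}{c})}}\le2l\lambda_kL$. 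Carrying the prefactor $2\lambda_kL$ through and summing $\sum_{l=1}^c l=\tfrac{c(c+1)}2$ produces precisely the perturbation term $2\lambda_k^2L^2c(c+1)$, i.e.~\eqref{noch_1}.

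Finally I would run the standard quasi-Fej\'er argument. Fixing a global minimizer $x^\ast$ and inserting $x=x^\ast$ into~\eqref{noch_1}, nonnegativity of $J(x^{(k)})-J(x^\ast)$ gives $\norm{x^{(k+1)}-x^\ast}^2\le\norm{x^{(k)}-x^\ast}^2+2\lambda_k^2L^2c(c+1)$; since $\sum_k\lambda_k^2<\infty$, a monotone-shift argument shows that $\norm{x^{(k)}-x^\ast}^2$ converges for every minimizer $x^\ast$, and in particular $\{x^{(k)}\}$ is bounded. Summing~\eqref{noch_1} over $k$ bounds $\sum_k\lambda_k(J(x^{(k)})-J(x^\ast))$ by a finite constant, and because $\sum_k\lambda_k=\infty$ this forces $\liminf_k J(x^{(k)})=\min J$. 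Extracting a subsequence $x^{(k_j)}\to\bar x$ realizing this liminf and invoking lower semicontinuity of $J$ gives $J(\bar x)=\min J$, so $\bar x$ is a minimizer; the already-established convergence of $\norm{x^{(k)}-\bar x}^2$ together with $\norm{x^{(k_j)}-\bar x}\to0$ then forces the whole sequence to converge to $\bar x$. The main obstacle I anticipate is the bookkeeping in the second paragraph---tracking the accumulated increments so that the constant comes out as $c(c+1)$---since that is where the cyclic structure (each $J_l$ being evaluated at a different shifted iterate) genuinely enters; the closing convergence step is routine once~\eqref{eq:CPPAlambda} is in hand.
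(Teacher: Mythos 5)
The paper offers no proof of this theorem at all --- it only points to \cite{Bac13a} and \cite{Ber10} --- so you are supplying an argument where the paper supplies none. Your architecture is the standard one for incremental/cyclic proximal methods and matches what those references do: the optimality condition $\tfrac{1}{\lambda_k}\bigl(x^{(k+\frac{l-1}{c})}-x^{(k+\frac{l}{c})}\bigr)\in\partial J_l\bigl(x^{(k+\frac{l}{c})}\bigr)$, the resulting per-step Fej\'er-type inequality, control of the intra-cycle drift via \eqref{noch_2} to replace $\sum_l J_l\bigl(x^{(k+\frac{l}{c})}\bigr)$ by $J(x^{(k)})$ at cost $2\lambda_k^2L^2c(c+1)$, and a closing quasi-Fej\'er argument driven by \eqref{eq:CPPAlambda}. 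The bookkeeping in your second paragraph is correct and does produce the constant $c(c+1)$, and the final convergence argument (summability, $\liminf$, lower semicontinuity, convergence of $\lVert x^{(k)}-\bar x\rVert$ for the minimizer $\bar x$) is sound.

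The gap is in your justification of \eqref{noch_2}. You assert that the hypothesis $J_l(x^{(k)})-J_l\bigl(x^{(k+\frac{l}{c})}\bigr)\le L\lVert x^{(k)}-x^{(k+\frac{l}{c})}\rVert$ ``bounds $\lVert g_l\rVert$''. It does not: testing $g_l\in\partial J_l\bigl(x^{(k+\frac{l}{c})}\bigr)$ against $x^{(k)}$ only yields $\langle g_l,\,x^{(k)}-x^{(k+\frac{l}{c})}\rangle\le L\lVert x^{(k)}-x^{(k+\frac{l}{c})}\rVert$, a one-sided bound on a single directional component of $g_l$. For $l=1$ the argument closes, because then $x^{(k)}$ \emph{is} the prox input and $\langle g_l,\,x^{(k)}-x^{(k+\frac{1}{c})}\rangle=\lambda_k\lVert g_l\rVert^2$; but for $l\ge 2$ the prox input is $x^{(k+\frac{l-1}{c})}$, not $x^{(k)}$, and bootstrapping the stated hypothesis through the accumulated drift only gives a step bound that degrades with $l$ (roughly $2^{l}\lambda_kL$), not the uniform $2\lambda_kL$ claimed in \eqref{noch_2}. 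What is actually needed --- and what holds in the paper's application, cf.\ the discussion of Lipschitz constants preceding and inside the proof of Lemma~\ref{lem:ConvLem4}, where each $J_l$ is verified to be $L$-Lipschitz along the iterates --- is either the same inequality with $x^{(k+\frac{l-1}{c})}$ in place of $x^{(k)}$ (then your own descent estimate $\lVert x^{(k+\frac{l-1}{c})}-x^{(k+\frac{l}{c})}\rVert^2\le\lambda_k\bigl(J_l(x^{(k+\frac{l-1}{c})})-J_l(x^{(k+\frac{l}{c})})\bigr)$ gives \eqref{noch_2} at once), or a genuine Lipschitz bound on $J_l$, which bounds every subgradient by $L$. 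You should either strengthen the hypothesis you invoke to one of these forms or flag that the condition as literally stated does not deliver the step bound for $l\ge 2$; the rest of your proof is unaffected once \eqref{noch_2} is secured.
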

Next we locate the iterates of the CPPA for vector space data in $\mathcal Y$
on a ball whose radius can be controlled. 
Since the data $f\colon\Omega^C \to  \mathcal Y$ is not defined on the whole grid
$\Omega_0$, we incorporate an extension operator $E$, e.g.\ \(E_\nu\).
An extension is needed as an initialization of the CPPA.
We note that there is a positive number $L'$ such that the
iterates~$\bigl\{ x^{(k+\frac{l}{c})} \bigr\}$ produced by
Algorithm~\ref{alg:CPPA} fulfill
\begin{equation}\label{eq:boundOnDistData}
	\norm[\big]{E(f) - x^{(k+\frac{l}{c})}}[\infty] \leq L'.
\end{equation}
This can be seen by taking $m_0,M_0\in\mathbb R$ as the minimum and maximum of all components and pixels of~$E(f)$, respectively,
	and by letting $m_{k+\frac{l}{c}},M_{k+\frac{l}{c}}\in\mathbb R$ the corresponding minima and maxima of the iterates  $x^{(k+\frac{l}{c})}.$
	Looking at the concrete form of the proximal mappings, the only proximal mappings which can increase the maximal value
	or decrease the minimal value during the iteration are those of the second order differences. 
	The possible increase is immediately decreased to initial niveau (of the macro-step) by the proximal mappings of data and first order terms if outside $[C \cdot m_0,C \cdot M_0]$ for sufficiently large $C.$  
	We note that $L'$ does not depend on the particular vector-valued $f$ but only on $M_0-m_0.$

\begin{lemma}\label{lem:ConvLem4}
Let~$f\colon\Omega^C \to \mathcal Y$ and a parameter
sequence $\lambda = \{\lambda_k\}_k$ of the CPPA with
property~\eqref{eq:CPPAlambda} be given.
Further let $\bigl\{ x^{(k+\frac{l}{c})} \bigr\}$ be the sequence
produced by Algorithm~\ref{alg:CPPA} for the inpainting functionals
$\tilde{J}_\Omega$ given by~\eqref{eq:Functional4InpaintingCovering}.
Let $x^\ast\colon\Omega_0 \to \mathcal Y$ be a minimizer of
$\tilde{J}_\Omega$.
Then, for all $k \in \mathbb N _0$ and all $l \in \{1,\ldots,c\}$, we have
\begin{align}
\label{eq:estCPPAiterates}
	&\norm[\big]{x^{(k+\frac{l}{c})}-x^\ast}
	\leq R
\intertext{where}
&R
 \coloneqq
	\sqrt{ \norm{E(f)-x^\ast}^2 + 2 \norm{\lambda}^2 L^2 c (c+1)} + 2 \norm{\lambda}[\infty] cL,
\end{align}
where $L= \max(4,L')$ using \(L'\) from~\eqref{eq:boundOnDistData} and $c$ denotes the number of inner iterations, i.e.
$c= 18$ in case of~\eqref{eq:Functional4InpaintingNoiselessCombCyclicVec} and
$c= 19$ in case of~\eqref{eq:Functional4InpaintingNoisyCombCyclicVec}, respectively, and $E$ is an operator extending $\mathcal Y$ valued functions defined on $\Omega^C$ to $\Omega_0$
used for initializing the algorithm.
Here, $\norm{\lambda}^2 = \sum_i \lambda_i^2$ and $\norm{\lambda}[\infty] = \sup_i \lambda_i.$  
\end{lemma}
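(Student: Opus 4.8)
The plan is to deduce the uniform radius estimate directly from the abstract convergence result Theorem~\ref{thm:bacak}, applied to $\tilde J_\Omega = \sum_{l=1}^c \tilde J_l$ on the vector space $\mathcal Y^{\Omega_0}$, by combining its two quantitative consequences: the descent inequality~\eqref{noch_1} and the one-step bound~\eqref{noch_2}. First I would check that the hypotheses of Theorem~\ref{thm:bacak} are satisfied in the present situation. Each $\tilde J_l$ is proper, closed and convex, since the difference terms are norms of linear maps $x\mapsto xw$ and the data term $\widetilde F_{\Omega^C}$ is a sum of squared Euclidean distances, all convex on $\mathcal Y$; a global minimizer exists by assumption, namely $x^\ast$. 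The Lipschitz-type bound $\tilde J_l(x^{(k)}) - \tilde J_l(x^{(k+\frac{l}{c})}) \le L\,\norm{x^{(k)}-x^{(k+\frac{l}{c})}}$ is taken with $L = \max(4,L')$: the constant $4$ dominates the Lipschitz constants of the first and second order difference terms (whose relevant weights satisfy $\norm{b_1}=\sqrt2$, $\norm{b_2}=\sqrt6$, $\norm{b_{1,1}}=2$, with the model parameters assumed small), while the value $L'$ from~\eqref{eq:boundOnDistData} bounds the local Lipschitz constant of the quadratic data term on the bounded region in which all iterates are confined.

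With Theorem~\ref{thm:bacak} available, I would first bound the full-cycle iterates $x^{(k)}$. Putting $x = x^\ast$ in~\eqref{noch_1} and using that $x^\ast$ is a global minimizer of $\tilde J_\Omega$, the descent term $\tilde J_\Omega(x^{(k)}) - \tilde J_\Omega(x^\ast)$ is nonnegative and may be dropped, leaving
\begin{equation*}
\norm[\big]{x^{(k+1)} - x^\ast}^2 \le \norm[\big]{x^{(k)} - x^\ast}^2 + 2\lambda_k^2 L^2 c(c+1).
\end{equation*}
Telescoping from $0$ to $k-1$, using $\sum_{j=0}^{k-1}\lambda_j^2 \le \norm{\lambda}^2$ from~\eqref{eq:CPPAlambda}, and recalling that the CPPA is initialized by $x^{(0)} = E(f)$, I obtain
\begin{equation*}
\norm[\big]{x^{(k)} - x^\ast}^2 \le \norm{E(f) - x^\ast}^2 + 2\norm{\lambda}^2 L^2 c(c+1),
\end{equation*}
which is exactly the square of the first summand defining $R$.

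It then remains to pass from the full-cycle iterates to the intermediate ones $x^{(k+\frac{l}{c})}$, which I would do by summing the one-step estimate~\eqref{noch_2}. For $l\in\{1,\ldots,c\}$ the triangle inequality gives
\begin{equation*}
\norm[\big]{x^{(k)} - x^{(k+\frac{l}{c})}} \le \sum_{j=1}^{l} \norm[\big]{x^{(k+\frac{j-1}{c})} - x^{(k+\frac{j}{c})}} \le 2 l\,\lambda_k L \le 2 c\,\norm{\lambda}[\infty] L .
\end{equation*}
Combining this with the full-cycle bound via $\norm{x^{(k+\frac{l}{c})}-x^\ast} \le \norm{x^{(k)}-x^\ast} + \norm{x^{(k)}-x^{(k+\frac{l}{c})}}$ yields exactly the claimed radius $R$, for every $k$ and every $l$.

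I expect the main obstacle to be the verification of the Lipschitz hypothesis of Theorem~\ref{thm:bacak} with the explicit constant $L=\max(4,L')$; once that is in place, the remainder is a telescoping of~\eqref{noch_1} together with a summation of~\eqref{noch_2}. The delicate point is that the quadratic data term is \emph{not} globally Lipschitz, so one genuinely needs the a priori confinement of the trajectory recorded in~\eqref{eq:boundOnDistData}---where, crucially, $L'$ depends only on $M_0-m_0$ and not on the particular $f$---in order to obtain a Lipschitz constant valid along the whole sequence of iterates.
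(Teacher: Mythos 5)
Your argument is correct and follows essentially the same route as the paper's own proof: verify the Lipschitz hypothesis of Theorem~\ref{thm:bacak} with $L=\max(4,L')$ (the TV terms bounded by $4$, the data term by $L'$ via the confinement~\eqref{eq:boundOnDistData}), telescope~\eqref{noch_1} at $x=x^\ast$ from the initialization $x^{(0)}=E(f)$, and then reach the intermediate iterates via~\eqref{noch_2} and the triangle inequality. The only cosmetic difference is that you justify the constant $4$ via the weight norms while the paper estimates second order differences by sums of first order differences; the substance is identical.
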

\begin{proof}
Equation~\eqref{noch_1} in Theorem~\ref{thm:bacak} tells us that
\begin{equation}\label{eq:appliedThm41}
	\begin{split}
		\norm[\big]{x^{(k+1)}-x}^2
		&\leq
			\norm[\big]{x^{(k)}-x}^2\\
		&\qquad - 2 \lambda_k \bigl(\tilde{J}(x^{(k)}\bigr)-\tilde{J}(x)]\\
		&\qquad + 2 \lambda_k^2 L^2 c(c+1).
	\end{split}
\end{equation}
We choose $L''$ as the maximum of the Lipschitz constants 
for the terms $J_i$ of Section~\ref{subsec:InpaintDenoiseCPPA} originating from the splitting  of the terms \(\operatorname{\widetilde{TV}}_1\),
\(\operatorname{\widetilde{TV}}_2\)
and~\(\operatorname{\widetilde{TV}}_{1,1}\). 
We note that these Lipschitz constants which are formed w.r.t. the Euclidean norm on $\mathcal Y$ are all bounded by $4$ which, for the second order differences,  
is seen by estimating second order difference by sums of first order differences.
This implies $L'' \leq 4.$
For the quadratic data term, 
we may differentiate $x_{i,j} \mapsto \frac{1}{2}  \lvert f_{i,j} -x_{i,j}\rvert^2.$
Then we notice that the $x_{i,j}$ are confined to an $L'$ ball around $E(f)_{i,j}$ 
(cf. \eqref{eq:boundOnDistData})
which bounds the Lipschitz constant of the data term (where the metric on $\mathcal Y$ is the Euclidean metric) by $L'$ in this case.
Therefore, we can set $L = \max(L',4)$.
We apply~\eqref{eq:appliedThm41} with a minimizer $x = x^\ast$ to obtain the estimate
\begin{align}
	\norm[\big]{&x^{(k+1)}-x^\ast}^2\\
	&\quad\leq \norm[\big]{x^{(k)}-x^\ast}^2 + 2 \lambda_k^2 L^2 c(c+1)\\
	&\quad\leq  \norm[\big]{x^{(0)}-x^\ast}^2 + 2 \sum\nolimits_{j=0}^k \lambda_j^2 L^2 c(c+1).
\end{align}
Using $x^{(0)}= E(f)$ yields $\norm{x^{(k+1)}-x^\ast}^2 \leq  \norm{ E(f)-x^\ast}^2 + 2 \norm{\lambda} \rVert_2^2 L^2 c(c+1).$
Now we use~\eqref{noch_2} and the triangle inequality to estimate 
\begin{equation}
	\begin{split}
	\norm{&x^{(k+\frac{l}{c})}-x^\ast}\\
		&\quad\leq 2  \lambda_k c L+ \norm{x^{(k)}-x^\ast}\\
		&\quad\leq 2  \lambda_k c L+ \sqrt{\norm{E(f)-x^\ast}^2 + 2 \norm{\lambda}^2 L^2 c(c+1)}.
	\end{split}
\end{equation}
This completes the proof.

\end{proof}
The following lemma shows that lifting commutes with applying the proximal mappings for the previous assumptions.
%
\begin{lemma}\label{lem:ConvLem5}
Let  $f\colon\Omega^C \to \mathcal X$ 
with~\( d^\Omega_\infty(f) < \frac{\pi}{8 B_\nu(\Omega)}\) and its lifting
$\tilde{f}$ of $f$ w.r.t.\ a base point~$q$ as before.
For each summand $J_l$ in the splitting $J_\Omega = \sum_l J_l$ from Section~\ref{subsec:InpaintDenoiseCPPA} for both inpainting functionals $J_\Omega$ defined via~\eqref{eq:Functional4InpaintingNoiselessCombCyclicVec}
and~\eqref{eq:Functional4InpaintingNoisyCombCyclicVec}, their corresponding functionals $\tilde J_\Omega$ from~\eqref{eq:Functional4InpaintingCovering},
any $x \in \mathcal S^\Omega(f,\frac{\pi}{8})$, 
and its lifting~$\tilde x$ w.r.t.\ $q$, we have
\begin{equation} \label{eq:commuteProxProj}
  \prox_{\lambda J_l}(x) = \pi_q(\prox_{\lambda \tilde{J}_l}(\tilde{x})),
\end{equation}
for all $l \in \{1,\ldots,18\}$ in case of~\eqref{eq:Functional4InpaintingNoiselessCombCyclicVec},	 
and for all $l \in \{1,\ldots,19\}$ in case of~\eqref{eq:Functional4InpaintingNoisyCombCyclicVec}.
\end{lemma}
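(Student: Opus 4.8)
The plan is to reduce the claim to a single block of the splitting and then to verify that, under the closeness hypothesis $x \in {\mathcal S}^\Omega(f,\frac{\pi}{8})$, the cyclic proximal formula of Theorem~\ref{thm:proxy_w} collapses to the Euclidean one of Lemma~\ref{lem:proxRn} evaluated on the lifting. First I would exploit the splitting from Section~\ref{subsec:InpaintDenoiseCPPA}: in each $J_l$ every pixel occurs at most once, so $J_l$ is a sum over pairwise disjoint pixel blocks (pairs for $D_1$, triples for $D_2$, quadruples for $D_{1,1}$, and singletons for the data term $J_{19}$). Since both $\prox_{\lambda J_l}$ and $\prox_{\lambda \tilde{J}_l}$ decompose over these blocks, and $\pi_q$ acts blockwise, it suffices to prove~\eqref{eq:commuteProxProj} for one difference term $D(\cdot\,;w)$ with $w \in \{b_1,b_2,b_{1,1}\}$ (together with its fixed/active partition) and for one data-term summand.

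Next I would establish that within each block no wrapping occurs. For $x \in {\mathcal S}^\Omega(f,\frac{\pi}{8})$ the estimate from the proof of Lemma~\ref{lem:ConvLem2} gives $d_{\mathcal X,m}(x_{i,j},x_{k,l}) < \frac{3\pi}{8}$ for neighbouring indices, so every first order difference has spherical part of modulus below $\pi$, and the analogous bounds force the second order differences $D(\cdot\,;b_2)$ and the mixed differences $D(\cdot\,;b_{1,1})$ to satisfy $\abs{(x_{\mathbb S}w)_{2\pi}}<\pi$ componentwise. Consequently $(x_{\mathbb S}w)_{2\pi} = \tilde x_{\mathbb S} w$ holds componentwise for the lifting $\tilde x$, so that $(xw)_{\mathcal X}$ coincides with the Euclidean vector $\tilde x w$, no component equals $\pm\pi$, and $G = \emptyset$ in the notation of the proof of Theorem~\ref{thm:proxy_w}. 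This places us in the single-valued special case~\eqref{eq:special-case}, disposing also of the multivaluedness flagged in the Remark. For the data term the same bound yields $d_{\mathcal X}(x_{i,j},f_{i,j}) \le \frac{\pi}{8} < \pi$ on $\Omega^C$, so the correction vector $v$ in Proposition~\ref{prop:data-prox} vanishes.

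I would then compare the explicit formulas. In $\mathcal Y$ the summand $\tilde J_l$ is an ordinary Euclidean difference (or squared-distance) term, so by Lemma~\ref{lem:proxRn} (respectively the linear case of Proposition~\ref{prop:data-prox}) its proximal mapping is $\prox_{\lambda \tilde J_l}(\tilde x) = \tilde x^a - m\,s\,(w^a)^\tT$ blockwise, with $s = \tilde x w/\norm{\tilde x w}$ and $m = \min\{\lambda, \norm{\tilde x w}/\norm{w^a}^2\}$. By the previous step $\tilde x w = (xw)_{\mathcal X}$, so $m$ and $s$ are exactly the amplitude and direction of~\eqref{eq:form_s_and_m}, whence $\prox_{\lambda J_l}(x) = (x^a - m\,s\,(w^a)^\tT)_{\mathcal X}$. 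Finally, applying $\pi_q$ to the lifted result reproduces this: the linear components pass through unchanged, while on the cyclic components $\pi_q$ acts via $\exp_{q_{\mathbb S}}$, i.e.\ exactly as the wrapping $(\cdot)_{2\pi}$ in the representation system fixed by $q$, so that $\pi_q(\tilde x^a - m\,s\,(w^a)^\tT) = (x^a - m\,s\,(w^a)^\tT)_{\mathcal X}$. This yields~\eqref{eq:commuteProxProj}, and the data term is handled identically using $v=0$.

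The main obstacle will be the bookkeeping in the second step: one must verify that the closeness $x \in {\mathcal S}^\Omega(f,\frac{\pi}{8})$ really forces the minimizing integer shifts $k,\sigma$ in the proof of Theorem~\ref{thm:proxy_w} to be the trivial ones inherited from the lifting, so that the cyclic proximal mapping does not pick up a wrap that $\pi_q$ would fail to undo. Once it is shown that all relevant modular differences lie strictly inside $(-\pi,\pi)$ and that $G=\emptyset$, the identification of $m$, $s$ and the commutation with $\pi_q$ are routine.
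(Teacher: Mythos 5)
Your proposal is correct and follows essentially the same route as the paper's proof: use the triangle inequality together with $x\in\mathcal S^\Omega(f,\tfrac{\pi}{8})$ and $d^\Omega_\infty(f)<\tfrac{\pi}{8 B_\nu(\Omega)}$ to bound the spherical distances of the pixels in each block by $\tfrac{3\pi}{8}$ (hence all relevant modular differences lie strictly inside $(-\pi,\pi)$ and no wrapping occurs), and then observe that the explicit formulas of Theorem~\ref{thm:proxy_w} and Proposition~\ref{prop:data-prox} reduce to their Euclidean counterparts under the identification $\pi_q$. Your version is in fact slightly more explicit than the paper's, which argues via common balls of radius $\pi/2$ and $\pi$ rather than spelling out that $(xw)_{\mathcal X}=\tilde x w$ and $G=\emptyset$, but the substance is identical.
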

\begin{proof}
The functional $J_{19}$ from~\eqref{eq:splitting-dataterm2DX} appearing in the splitting of~\eqref{eq:Functional4InpaintingNoisyCombCyclicVec} is based on the distance to the data $f$ for indices $(i,j) \in \Omega^C$.
Since $x \in \mathcal S^\Omega(f,\frac{\pi}{8})$, it holds $d_{\mathcal X,m}(x_{i,j},f_{i,j}) \leq \frac{\pi}{8}$ for all $(i,j) \in \Omega^C$.
The components of the proximal mapping $\prox_{\lambda J_{19}}$ are given by
Proposition~\ref{prop:data-prox} from which we
conclude~\eqref{eq:commuteProxProj} for $l=19$.
The other proximal mappings of $J_1,\ldots,J_{18}$, are given via proximal
mappings of the first and second order cyclic differences from Theorem~\ref{thm:proxy_w}.
We first consider cyclic components of the first order differences, i.e. summands involving $D_1$.
By the triangle inequality we have
\begin{align*}
	d_{\mathcal X,m}&(x_{i,j},x_{i,j+1})&\\%
	&\leq d_{\mathcal X,m}(x_{i,j},f_{\nu(i,j)})
		+ d_{\mathcal X,m}(f_{\nu(i,j)},f_{\nu(i,j+1)})\\
	&\qquad + d_{\mathcal X,m}(x_{i,j+1},f_{\nu(i,j+1)})\\
	&\leq \frac{2\pi}{8} + B_\nu(\Omega) \ d^\Omega_\infty(f)
	< \frac{3\pi}{8}.
\end{align*}
Analogously we get $d_{\mathcal X,m}(x_{i,j},x_{i+1,j}) < \frac{3\pi}{8}$.
By the explicit form of the proximal mapping given in
Theorem~\ref{thm:proxy_w} we obtain~\eqref{eq:commuteProxProj}
for the $J_l$, $l=1,\ldots,8$, which involve first order differences.
Next we consider the second order differences $D_2$ with respect to the cyclic
components.
Let us exemplarily consider the vertical second order difference~$D_2(x_{i,j-1},x_{i,j},x_{i,j+1})$.
Analogously as above, we see that the inequalities
$d_{\mathcal X,m}(x_{i,j-1},x_{i,j}) < \frac{3\pi}{8}$,
$d_{\mathcal X,m}(x_{i,j},x_{i,j+1})<\frac{3\pi}{8}$
and~$d_{\mathcal X,m}(x_{i,j-1},x_{i,j})<\frac{\pi}{2}$ hold.
Hence all the cyclic parts of the contributing values of~\(x\) lie in a common ball of radius $\pi/2$.
Applying the proximal mapping in Theorem~\ref{thm:proxy_w} the cyclic parts of the resulting points lie in a common open ball of radius $\pi$.

An analogous statement holds true for the horizontal part.
Hence the proximal mappings of these second differences agree with
the cyclic version under identification via $\pi_q$. This
implies~\eqref{eq:commuteProxProj} for $J_9,\ldots, J_{14}.$
It remains to deal with the cyclic components of the mixed second order
differences $D_{1,1}(x_{i,j},x_{i+1,j},x_{i,j+1},$ $x_{i+1,j+1})$.
As above, we have for neighboring data items that the distance on the cyclic parts is smaller
than~$\frac{3\pi}{8}$.
For all four contributing values of \(x\) we have that the
pairwise distance on the cyclic parts is smaller than $\frac{\pi}{2}$. 
So their cyclic parts again lie in a boll of radius smaller than \(\pi\)
and the proximal mappings agree under identification.
This completes the proof.
\end{proof}

In the following main theorem we combine the preceding
lemmas to show that the output of the applied proximal mappings remains in $\mathcal S^\Omega(f,\frac{\pi}{8})$. 
This then allows for an iterated application of Lemma~\ref{lem:ConvLem5}. 

%
\begin{theorem}\label{thm:Convergence}
We choose the sequence $\lambda = \{\lambda_k\}_k$ fulfilling property~\eqref{eq:CPPAlambda}
and \(\varepsilon>0\) such that 
\begin{align} \label{eq:condTheorem}
	 \sqrt{ 4 \varepsilon^2 + 2 \norm{\lambda}^2 L^2 c (c+1)}
	+ 2 \norm{\lambda}[\infty] cL  < \frac{\pi}{16},
\end{align}
where $c = 18$ or $c=19$ and $L= \max(4,L')$ with $L'$ as in~\eqref{eq:boundOnDistData}.
We consider data  $f\colon\Omega^C \to \mathcal X$ 
with both~\( d^\Omega_\infty(f) < \frac{\pi}{8 B_\nu(\Omega)}\)
and $d^\Omega_\infty(f) < \tfrac{\varepsilon}{4 B'_\nu(\Omega)}.$
We assume further  that the parameter vectors $\alpha,\beta,\gamma$
of the inpainting functionals  $J_\Omega$
given by either~\eqref{eq:Functional4InpaintingNoiselessCombCyclicVec}
or~\eqref{eq:Functional4InpaintingNoisyCombCyclicVec}
satisfy~\eqref{eq:EstAlpaBeta} and that the initialization of the inpainting region $E(f)$ is close to the nearest neighbor extension $E_\nu(f)$ in the sense that 
\[d_{\mathcal X,m}(E(f),E_\nu(f)) = \max_{i,j} d_{\mathcal X,m}(E(f)_{ij},E_\nu(f)_{ij}) \leq \varepsilon.
\]
Then the sequence $\{x^{(k)} \}_k$ generated by the
CPPA given by Algorithm~\ref{alg:CPPA} converges to a global minimizer of $J$. 
\end{theorem}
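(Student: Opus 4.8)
The plan is to lift the entire problem to the universal covering $\mathcal Y=\mathbb R^{n+m}$, where the lifted functional $\tilde J_\Omega$ from~\eqref{eq:Functional4InpaintingCovering} is a finite sum of proper, closed, convex functionals, to run the CPPA there, invoke the convex convergence result Theorem~\ref{thm:bacak}, and finally push the convergent sequence back to $\mathcal X$ via the projection $\pi_q$. First I would fix the base point $q$ and the lifted data $\tilde f$ as in Lemma~\ref{lem:ConvLem2}, and record that $\tilde J_\Omega$ admits a global minimizer: by Lemma~\ref{lem:ConvLem1} any minimizer $x^\ast$ of $J_\Omega$ satisfies $e_\infty(x^\ast,f)\le\varepsilon$, hence (since $\varepsilon<\tfrac{\pi}{8}$) lies in $\mathcal S^\Omega(f,\tfrac{\pi}{8})$, and Lemma~\ref{lem:ConvLem3} turns its lift $\bar y\coloneqq\tilde x^\ast$ into a minimizer of $\tilde J_\Omega$ lying in $\widetilde{\mathcal S}^\Omega(\tilde f,\tfrac{\pi}{8})$; existence itself follows from continuity together with the confinement of all iterates to the $L'$-ball of~\eqref{eq:boundOnDistData}.

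The heart of the argument is to confine every CPPA iterate to the admissible set, since only on $\mathcal S^\Omega(f,\tfrac{\pi}{8})$ does the lifting intertwine the proximal steps (Lemma~\ref{lem:ConvLem5}) and do the two functionals agree (Lemma~\ref{lem:ConvLem2}). Applying Lemma~\ref{lem:ConvLem4} with the reference minimizer $\bar y$ places every iterate $y^{(k+\frac{l}{c})}$ within distance $R$ of $\bar y$. On the $\mathbb S^1$-components both the initialization and $\bar y$ sit within $\varepsilon$ of $\tilde f$: for $\bar y$ by the $e_\infty\le\varepsilon$ bound of Lemma~\ref{lem:ConvLem1} (respectively Lemma~\ref{lem:TildeJ}), and for the initialization by the hypothesis $d_{\mathcal X,m}(E(f),E_\nu(f))\le\varepsilon$ together with $E_\nu(f)_{i,j}=f_{\nu(i,j)}$. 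A triangle inequality then gives $\lvert E(f)-\bar y\rvert_{\mathcal Y,m}\le 2\varepsilon$, which is exactly the quantity entering as $4\varepsilon^2$ in~\eqref{eq:condTheorem}; invoking that standing hypothesis forces the spherical part of the radius $R$ strictly below $\tfrac{\pi}{16}$, the linear components entering $R$ but, in line with the guiding principle of the paper, not affecting the spherical confinement. Combining this with $\lvert \bar y_{i,j}-\tilde f_{\nu(i,j)}\rvert_{\mathcal Y,m}\le\varepsilon<\tfrac{\pi}{32}$ yields $\lvert y^{(k+\frac{l}{c})}_{i,j}-\tilde f_{\nu(i,j)}\rvert_{\mathcal Y,m}<\tfrac{\pi}{8}$, so every iterate remains in $\widetilde{\mathcal S}^\Omega(\tilde f,\tfrac{\pi}{8})$ and its projection in $\mathcal S^\Omega(f,\tfrac{\pi}{8})$.

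With this confinement secured I would close the loop by induction on the cyclic index. Assuming the $\mathcal X$-iterate equals $\pi_q(y^{(k+\frac{l}{c})})$ and lies in $\mathcal S^\Omega(f,\tfrac{\pi}{8})$, Lemma~\ref{lem:ConvLem5} gives $\prox_{\lambda_k J_{l+1}}(\pi_q(y^{(k+\frac{l}{c})}))=\pi_q(\prox_{\lambda_k \tilde J_{l+1}}(y^{(k+\frac{l}{c})}))=\pi_q(y^{(k+\frac{l+1}{c})})$, and the confinement just established keeps the new point admissible, so the induction continues. Hence Algorithm~\ref{alg:CPPA} run in $\mathcal X$ produces precisely $x^{(k)}=\pi_q(y^{(k)})$. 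The hypotheses of Theorem~\ref{thm:bacak} hold in $\mathcal Y$ (each $\tilde J_l$ is convex, and the Lipschitz-type condition holds with $L=\max(4,L')$, the TV terms having constant at most $4$ and the data term at most $L'$ on the ball of~\eqref{eq:boundOnDistData}), so $y^{(k)}\to y^\ast$ for some minimizer $y^\ast$ of $\tilde J_\Omega$. Continuity of $\pi_q$ then yields $x^{(k)}=\pi_q(y^{(k)})\to\pi_q(y^\ast)$, and Lemma~\ref{lem:ConvLem3} identifies $\pi_q(y^\ast)$ as a global minimizer of $J_\Omega$, which is the assertion.

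The main obstacle is exactly the confinement of the second step. Everything else is either convex-analytic bookkeeping in $\mathcal Y$ or a direct appeal to the preceding lemmas, but the passage from the global convex convergence in $\mathcal Y$ down to the nonconvex space $\mathcal X$ is legitimate only as long as no iterate leaves $\mathcal S^\Omega(f,\tfrac{\pi}{8})$. Calibrating the radius estimate $R$ of Lemma~\ref{lem:ConvLem4} against the threshold~\eqref{eq:condTheorem} — and in particular isolating the spherical components so that the a priori unbounded linear part is harmless — is the delicate point on which the whole argument rests.
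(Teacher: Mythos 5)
Your proposal is correct and follows essentially the same route as the paper's own proof: lift to $\mathcal Y$, bound the radius $R$ of Lemma~\ref{lem:ConvLem4} by combining $\lvert \tilde f - \widetilde{E(f)}\rvert\le\varepsilon$ and $\lvert y^\ast-\tilde f\rvert\le\varepsilon$ into the $4\varepsilon^2$ term of~\eqref{eq:condTheorem} to confine all iterates to $\widetilde{\mathcal S}^\Omega(\tilde f,\tfrac{\pi}{8})$, then induct with Lemma~\ref{lem:ConvLem5} to identify $x^{(k+\frac{l}{c})}=\pi_q(y^{(k+\frac{l}{c})})$ and conclude via Theorem~\ref{thm:bacak} and Lemma~\ref{lem:ConvLem3}. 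You also correctly identify the confinement step as the crux of the argument.
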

\begin{proof} 
Let $\tilde{f}$ be the lifting of $E_\nu(f)$ with respect to a base
point~$q$ not antipodal to $f_{\nu(1,1)}$ and fixed $\tilde{f}_{\nu(1,1)}$ with
$\pi_q (\tilde{f}_{\nu(1,1)}) = f_{\nu(1,1)}$.
Furthermore, let $\tilde J_\Omega$ denote the analogue of $J_\Omega$ for $\mathcal Y$ valued data given by~\eqref{eq:Functional4InpaintingCovering}.
Since $d_{\mathcal X,m}(E(f),E_\nu(f)) \leq \varepsilon \leq \frac{\pi}{32}$, the function $ E(f)$ is in
 ${\mathcal S}^\Omega(f,\frac{\pi}{32}) \subset$  ${\mathcal S}^\Omega(f,\frac{\pi}{8})$
 (which is important for the application of Lemma~\ref{lem:ConvLem2} to the grid function $E(f)$ later on.)
From Lemma~\ref{lem:ConvLem1} we conclude that the minimizer~$y^\ast$
of~$\tilde{J}_\Omega$ fulfills
$\norm{y^\ast - \tilde{f}} \leq \varepsilon < \frac{\pi}{32},$ where $\tilde{f}$ is defined at the beginning of this proof.
By~\eqref{eq:estCPPAiterates} we obtain
\begin{align} \label{eq:Nr1}
	R &= \sqrt{ \norm[\big]{y^\ast - \widetilde{E(f)}}^2
			+ 2 \norm{\lambda}^2 L^2 c (c+1)}
		+ 2 \norm{\lambda}[\infty] cL \\
	&\leq \sqrt{ 2 \norm{y^\ast - \tilde{f}}^2 +  2 \norm[\big]{\tilde{f} - \widetilde{E(f)}}^2 + 2 \norm{\lambda}^2 L^2 c (c+1)}
	\\&\qquad
	+ 2 \norm{\lambda}[\infty] cL\\
	&\leq \sqrt{ 4 \varepsilon^2 + 2 \norm{\lambda}^2 L^2 c (c+1)}
		+ 2 \norm{\lambda}[\infty] cL  < \frac{\pi}{16},
\end{align}
where \(\widetilde{E(f)}\) denotes the lifting of \(E(f)\).
By Lemma~\ref{lem:ConvLem4} the iterates $y^{(k+\frac{l}{c})}$ of 
the  CPPA for $\mathcal Y$-valued data fulfill
\[
  \norm{y^{(k+\frac{l}{c})} - y^\ast}  \le R < \frac{\pi}{16}.
\]
Hence, combining these estimates yields $\norm{y^{(k+\frac{l}{c})} - \tilde{f}} \leq \norm{y^{(k+\frac{l}{c})} - y^\ast}$ $+   \norm{y^\ast - \tilde{f}}$ $\leq \tfrac{3 \pi}{32}.$
Estimating the discrete $\ell^\infty$ norm by the $\ell^2$ norm, we get from the previous line that 
$\norm{y^{(k+\frac{l}{c})} - \tilde{f}}[\infty] < \tfrac{3 \pi}{32}$
which means that all iterates $y^{(k+\frac{l}{c})}$ stay within $\tilde{\mathcal S}^\Omega(\tilde f,\frac{\pi}{8}).$

After these preparations we now consider the
sequence~$\{ x^{(l+\frac{k}{c})} \}$ of the CPPA for~$\mathcal X$-valued data
$f$ with initialization $E(f)$.
We show that $x^{(k+\frac{l}{c})} = \pi_q(y^{(k+\frac{l}{c})})$.\\
By definition and by Lemma~\ref{lem:ConvLem3}, we know
\[
	x^{(0)} = E(f) = \pi_q \bigl(\widetilde{E(f)}\bigr) = \pi_q(y^{(0)}).
\]
We continue a proof by induction and assume that $x^{(k+\frac{l-1}{c})} = \exp_q(y^{(k+\frac{l-1}{c})})$.
By the local bijectivity of the lifting shown in Lemma~\ref{lem:ConvLem2},
and since $y^{(k+\frac{l-1}{c})}\in\tilde{\mathcal S}^\Omega(\tilde f,\tfrac{\pi}{8})$,
we conclude $x^{(k+\frac{l-1}{c})}\in \mathcal S^\Omega(f,\tfrac{\pi}{8})$.
\\
By Lemma~\ref{lem:ConvLem5}, we obtain
\begin{equation}
	\begin{split}
		\pi_q(y^{(k+\frac{l}{c})})
		&= \pi_q\bigl(\prox_{\lambda_k\tilde J_{l}}(y^{(k+\frac{l-1}{c})})\bigr)\\
		&= \prox_{\lambda_k J_{l}}( x^{(k+\frac{l-1}{c})}) = x^{(k+\frac{l}{c})}.
	\end{split}
\end{equation}
By the same argument as above we have again $x^{(k+\frac{l}{c})} \in \mathcal S(f,\tfrac{\pi}{8})$.
Finally, Theorem~\ref{thm:bacak} tells us that
\[
x^{(k)} = \pi_q(y^{(k)}) \rightarrow \pi_q(y^\ast) \quad {\rm as} \quad k \rightarrow \infty
\]
and by Lemma~\ref{lem:ConvLem3} we finally obtain that $x^\ast \coloneqq \pi_q(y^\ast)$ is a global minimizer of $J$.

\end{proof}
\section{Applications} %
\label{sec:Applications}
%
%
%
In this section we apply our algorithms to various image processing tasks. These are denoising in HSV space in Section~\ref{subsec:NLcolorspaceAppl},
inpainting in both a noise-free as well as a noisy setting in Section~\ref{subsec:Inpainting} for both synthetic as well as real world data. 
Finally, we apply our algorithms for denoising frames in volumetric phase-valued data ---in our case, frames of a 2D film--- in Subsection\ref{sec:denoisingVolumes}. Our approach is based on utilizing the neighbouring $l$ frames to incorporate the temporal neighbourhood. The idea generalizes to arbitrary data spaces and volumes consisting of layers of 2D data.

\begin{figure*}[t]\centering
		\includegraphics[scale=.9]{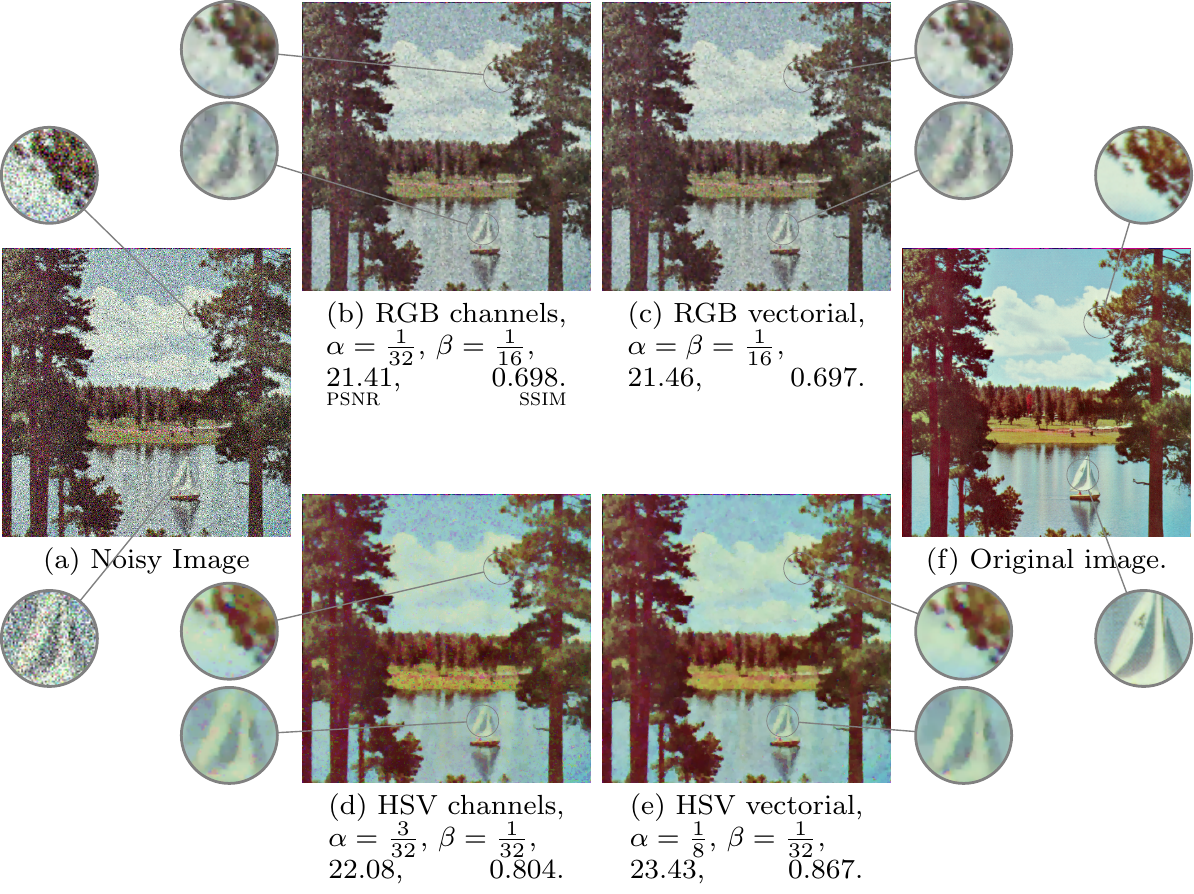}
	\caption{Denoising an image with independent wrapped Gaussian and Gaussian noise~\(\sigma=\tfrac{1}{5}\) on each of the HSV channels. 
	The RGB-based approaches (b), (c) produce less colorful results than the 
	HSV-based approaches (d),(e). In contrast to channel-wise denoising (d), the combined
	approach proposed in this paper (e) gets the object boundaries more properly and outperforms the other approaches in both PSNR ans SSIM.}\label{fig:sailboat}
\end{figure*}
The  algorithms were implemented in MATLAB 
The computations for the following examples were performed on a MacBook Pro with an Intel Core i5,
2.6\,Ghz and Mac OS 10.10.1. For all following experiments we set the sequence \(\{\lambda_k\}_k\) to \(\lambda_k = \frac{\pi}{2k}\) and set the number of iterations \(k\) to be \( 400 \) as a stopping criterion for Algorithm~\ref{alg:CPPA}.

\subsection{Denoising nonlinear color space data}
\label{subsec:NLcolorspaceAppl}
As a first application, we consider denoising color space data. Various nonlinear
color spaces have been considered in the literature; examples are 
luma plus chroma/chrominance based spaces such as YIQ, YUV and YDbDr
and HSL type color space such as HSL, HSI or HSD.
We here consider the HSV (hue-saturation-value) color space: 
the hue component is cyclic, the saturation and the value component are real-valued.

We apply our algorithm for denoising combined cyclic and linear data to these $\mathbb{S}^1 \times \mathbb R^2$ valued data. We compare the results with the usual approach using the linear RGB color space. For both spaces, we compare our approach on the product space with a model that denoises each channel separately. Finally, we compare the results of all these approaches under different noise models: we impose Gaussian noise on each component in RGB space. We impose Gaussian noise to saturation and value, and wrapped Gaussian noise, i.e. wrapping the normal distribution on the circle or in other words computing \(\bmod\,2\pi\) after adding the noise, to the hue component of the HSV space.

In Figure~\ref{fig:sailboat}, a colorful drawing of a
sailboat\footnote{Taken from the USC-SIPI Image Database, see
\url{http://sipi.usc.edu/database/database.php?volume=misc&image=14}} of
size~\(512\times512\) pixel is obstructed by noise on all three channels of the HSV
color space: for the hue, which is given on \([0,1]\) we applied wrapped
Gaussian noise (\(\bmod\,1\)) with standard deviation \(\sigma = \tfrac{1}{5}\) and mean \(0\), for saturation and
value, which are also given on the same range but are not cyclic, we applied Gaussian noise, also with \(\sigma = \tfrac{1}{5}\) and set all pixels exceeding \(1\) to \(1\) and all deceeding \(0\) to \(0\).
The
resulting image is shown in
Figure~\ref{fig:sailboat}\,(a). We then apply four
different first and second oder differences based approaches, where for each,
the best result among the range of parameter from
\(\alpha\coloneqq\alpha_1=\alpha_2 \in \frac{1}{32}\mathbb N_0\), i.e. the grid consisting of multiples of \(\frac{1}{32}\) including \(0\) as long as \(\beta\neq0\) and the same range 
applies for \(\beta\coloneqq\beta_1=\beta_2=\gamma\). We further for this example \(\alpha_3=\alpha_4=0\), i.e. we focus on the purely anisotropic first order discrete total variation and a second order isotropic version. We measure the
quality using a peak signal to noise ratio (PSNR) and the structural similarity index (SSIM) on RGB color space and. We record the best result with respect to PSNR in~\ref{fig:sailboat}\,(b)--(e) and the original is shown in \ref{fig:sailboat}\,(f) for comparison of the enlarged regions.

First, we apply a real valued approach to each of the RGB channels separately. For \(\alpha=\tfrac{1}{32}\),
\(\beta=\tfrac{1}{16}\) we obtain the best PSNR of \(21.41\), which
is shown in Figure~\ref{fig:sailboat}\,(b).
Applying a vector valued approach on RGB, i.e.\ setting \(m=0, n=3\) in
Algorithm~\ref{alg:CPPA}, the best result obtains a PSNR of \(21.46\) for \(\alpha=\beta=\tfrac{1}{16}\), cf. Figure~\ref{fig:sailboat}\,(c). While this outperforms the component wise denoising by taking combined color space edges into account, it does not reconstruct the colorfullness of the image.

On the other hand, we apply Algorithm~\ref{alg:CPPA} to each of the channels of
HSV, i.e., setting \(m=1,n=0\) for the first channel and taking the real valued
case from above for the second and third. In order to keep the channels
unscaled, the algorithm presented in this paper is rescaled to run w.r.t. to \(\bmod\,1\).
The result is shown in
Figure~\ref{fig:sailboat}\,(d) yielding a PSNR of
\(22.08\). Finally, applying a vector valued approach on HSV, i.e.\ setting
\(m=1\) and \(n=2\) ---again having the cyclic channel w.r.t.\ \(\bmod\,1\)--- 
yields an image shown in
Figure~\ref{fig:sailboat}\,(e), having a PSNR of \(23.43\), the best result of all four compared algorithms. Note that especially the colors are much better reconstructed than in the RGB based denoising approaches, which both suffer from reduced saturation. This increase in quality can also be seen in the SSIM values denoted at the subfigure captions in the bottom right. Interestingly the SSIM value of the componentwise approach on RGB is just a bit better than the vector valued approach. Furthermore, edges can be much better recognized in the vectorial approaches than in both channel-wise approaches, see especially the magnified region of the sail.
\begin{figure}[t]\centering
		\includegraphics[scale=.9]{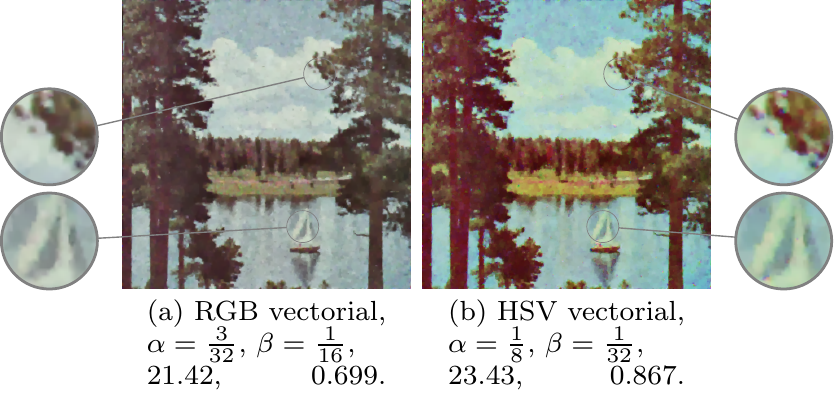}
	\caption{The best results on \(\frac{1}{32}\mathbb N_0\) of the parameters and the noise described in~\ref{fig:sailboat2} but with respect to SSIM. While the best result w.r.t.\ SSIM coincides with the best PSNR value for the vectorial HSV approach in, see\,(b), the best vectorial RGB approach only increases slightly in SSIM, still suffering the same effects.}\label{fig:sailboat2}
\end{figure}

Nevertheless there is are two tradeoffs. One concerns the overall amount of the parameters: Increasing the parameters smoothens the image and removes noise in larger constant regions like the sky but also destroys small details, like the leaves or the sail. The best PSNR is something in between: while the sky still resembles a little noise, the sail is only smoothened a little, but its main features are kept.
The second tradeoff is between the first and second order terms. While a dominant value of \(\alpha\) would keep edges in the image, it also introduces the well-known stair casing effect. This effect is reduced by the second order term, i.e.\ the parameter(s) \(\beta\), which also smoothens edges. Due to the existence of both smooth regions and edges in natural images, a certain equilibrium has to be chosen.
%
%
\subsection{Inpainting nonlinear color space data}\label{subsec:Inpainting}
\paragraph{Inpainting noise-free data.}\par
Here we consider the situation where some data items are missing, are lost or have been removed by a user. As example space we again consider the HSV color space.  
As in Section \ref{subsec:NLcolorspaceAppl}, 
we compare the results with the usual approach using the linear RGB color space and with the HSV approach working component-wise.

We consider a synthetic image in the HSV color space given by the function
\begin{equation}\label{eq:Formula4SynthImage}
\bigl(
\text{\lstinline!atan2!}\frac{x}{y},\ 1-x^2,\ 1-\abs{x+y}
\bigr),
\quad
x,y\in[-\tfrac{1}{2},\tfrac{1}{2}]^2,
\end{equation}
where the first component is the arctangent function with two arguments. 
This extends a synthetic $\mathbb S^1$ example used by the authors in \cite{BW15}.
The original image is shown in Figure~\ref{fig:synth}\,(f). The initial data is obtained by removing a disc with radius \(r=\frac{1}{4}\) as shown in
Figure~\ref{fig:synth}\,(a). The goal is to ``recover'' the image in Figure~\ref{fig:synth}\,(f).
For the inpainting we again apply Algorithm~\ref{alg:CPPA} using \(K=800\) iterations and performing a parameter search on \(\alpha\coloneqq\alpha_1=\alpha_2, \beta\coloneqq\beta_1=\beta=2=\gamma \in\frac{1}{8}\mathbb N_0\). Then, the real valued approaches in RGB color space, both channel-wise (b) and  vectorial (c), do not reconstruct the original colors correctly.
In contrast, the channel-wise approach (d) and the vectorial
approach (e) on the HSV space keep the colors and both produce very satisfactory results.
The result of the vectorial approach in Figure~\ref{fig:synth}\,(e) is a nuance ``sharper'', its reconstruction is a little bit better in PSNR at least.
\begin{figure*}[t]\centering
	\includegraphics[scale=.9]{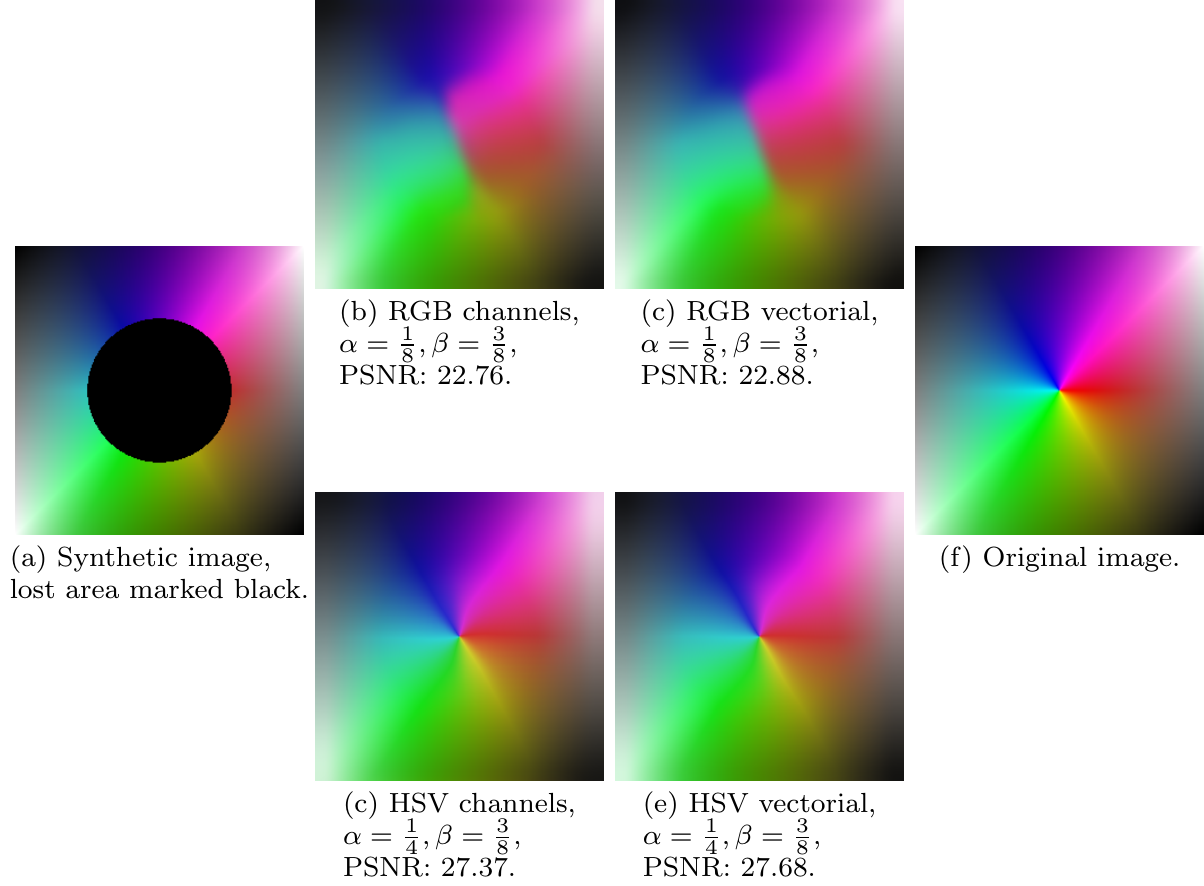}
	\caption{Reconstruction of a synthetic image with the black inner circle missing in a noiseless setup. The RGB-based reconstructions (a),(b) both yield a degrading of the colors, while the  HSV-based approaches (c),(d) reconstruct the colors. 
	The proposed approach 
	(d) based on a \(\mathbb S^1\times \mathbb R^2\) model yields the best PSNR.}
	\label{fig:synth}
\end{figure*}
\paragraph{Inpainting and denoising data.}\par
In many situations data is noisy and parts are lost or invalid. 
This results in an combined inpainting and denoising problem for which we apply the proposed methods next. As in Section~\ref{subsec:NLcolorspaceAppl} before, 
we consider the HSV color space and
compare the results with the usual approach using the linear RGB color space and with the HSV approach working component-wise. 

As a test scenario, we add wrapped Gaussian and Gaussian noise with \(\sigma = \frac{1}{5}\) to the cyclic and non-cyclic components, respectively, similar to Section~\ref{subsec:NLcolorspaceAppl}. Furthermore, we remove a ball in the center as also done in Section \ref{subsec:Inpainting}; see Figure~\ref{fig:synth_noise}.
This initial is shown in Figure~\ref{fig:synth_noise}\,(a). 
Again, we would like to get back the image shown in Figure~\ref{fig:synth_noise}\,(f), which is data defined in HSV space and therefore also has features e.g.\ in the hue. The parameters for the algorithm are obtained using the same setup for Algorithm~\ref{alg:CPPA} as in Section~\ref{subsec:NLcolorspaceAppl}. Then, the real valued approaches in RGB color space, both channel-wise (b) and  vectorial (c), do not reconstruct the significant features.
In contrast, the channel-wise approach (d) and the vectorial
approach (e) on the HSV space reconstruct all significant features and produce almost perfect results.
The result of the vectorial approach in Figures~\ref{fig:synth_noise}\,(e) is a nuance ``sharper'' yielding a slightly higher PSNR.

\begin{figure*}[t]\centering
	\includegraphics[scale=.9]{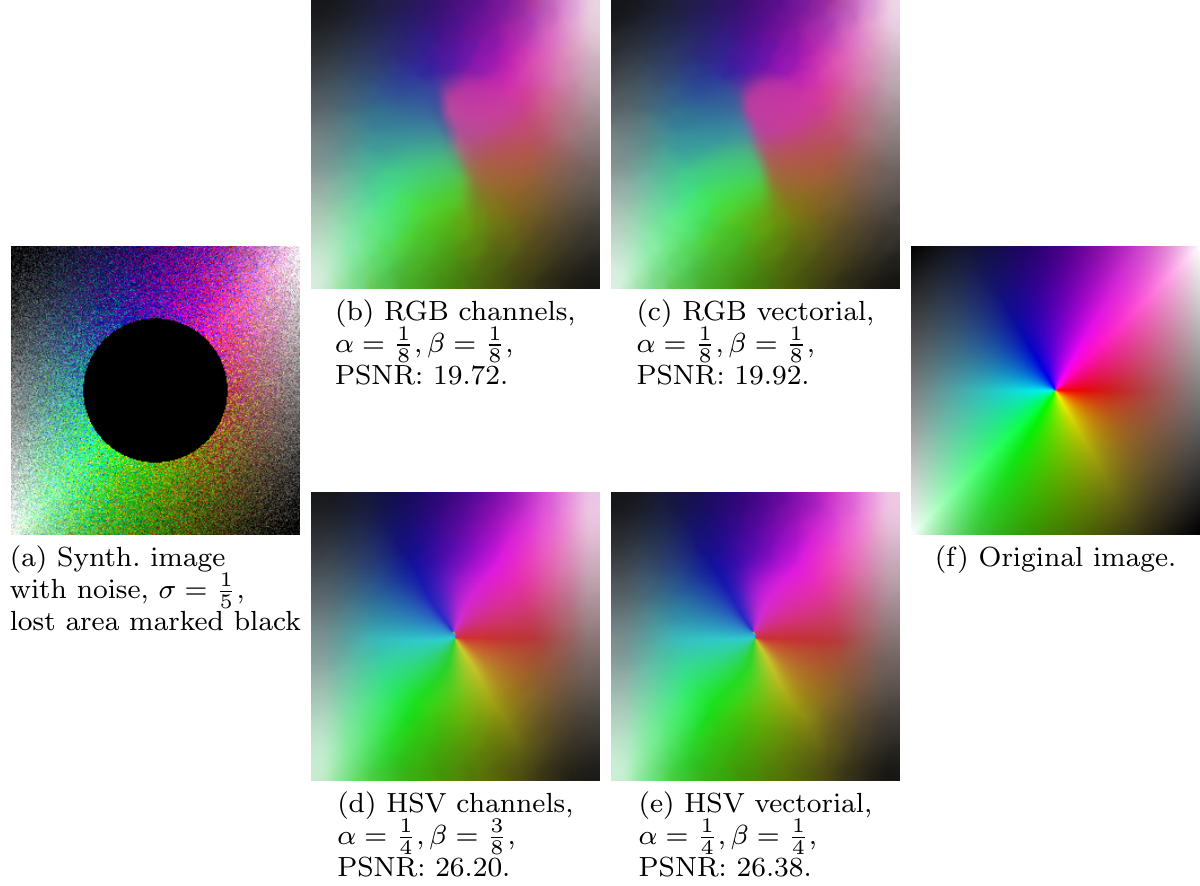}
	\caption{Reconstruction of a synthetic image with the black inner circle indicating missing data and the measurement itself being noisy. The RGB based reconstructions (b),(c) miss the main smooth features. The HSV model based reconstructions (d),(e) reconstructs these features yielding a satisfactory result. The proposed vectorial approach (e) yields the best PSNR.}
	\label{fig:synth_noise}
\end{figure*}
%
\paragraph{Inpainting real world data.}\par
Looking at real world data, we compare an inpainting of the image ``beach''\footnote{Available at \href{http://pixabay.com/de/strand-lagune-sonnenuntergang-164288/}{http://pixabay.com/de/strand-lagune-sonnenuntergang-164288/} and published under public domain.}.
In Figure~\ref{fig:beach}\,(\subref{subfig:beach:lossy}) some areas of the image were lost. We
compare the vectorial approach on RGB color space in
Fig.~\ref{fig:beach}\,(\subref{subfig:beach:rgb}) and HSV in Fig.~\ref{fig:beach}\,(\subref{subfig:beach:hsv}) for the same
set of parameters \(\alpha=\frac{1}{16}\), \(\beta=\frac{1}{32}\): the HSV color model is able to
also incorporate color changes as shown in the magnification, where the
HSV model is able to reconstruct the violet part of the clouds. We note that we employed the inpainting algorithm as described in Algorithm~\ref{alg:CPPA} also
for the diagonal differences. Even iterating alternatingly the anisotropic differences yields preference of the diagonal directions, which is even enhanced by the diagonal differences. Therefore both models tend to create wavy structures, when the surrounding is not isotropic as in the last examples. 

Despite the most bottom left inpainting area, the HSV model performs better than the RGB model with respect to PSNR, where the HSV model obtains \(30.612\) and the RGB model is slightly behind with \(30.548\). The same holds for the SSIM, though there the difference is quite small, because even the SSIM of the lost version ---setting lost pixel to white as in Figure~\ref{fig:beach}\,(\subref{subfig:beach:lossy})--- yields an SSIM value of \(0.9202\). We obtained an SSIM value of \(0.98866\) for the complete image with the HSV based inpainting and \(0.98862\) for the RGB based inpainting.

\begin{figure*}[tbp]\centering
	\begin{subfigure}{.230\textwidth}\centering
		\includegraphics[width=\textwidth]{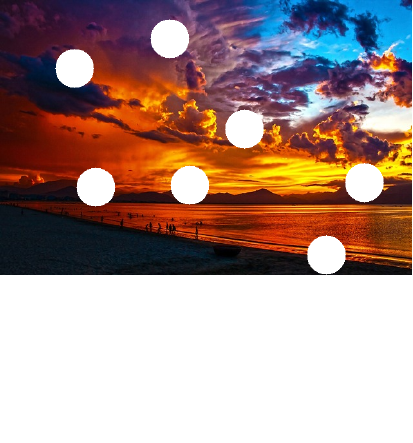}
		\caption{Lost area (white).}\label{subfig:beach:lossy}
	\end{subfigure}
	\begin{subfigure}{.230\textwidth}\centering
		\includegraphics[width=\textwidth]{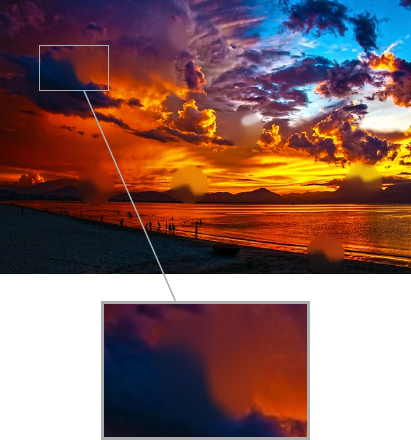}
		\caption{RGB model.}\label{subfig:beach:rgb}
	\end{subfigure}
	\begin{subfigure}{.230\textwidth}\centering
		\includegraphics[width=\textwidth]{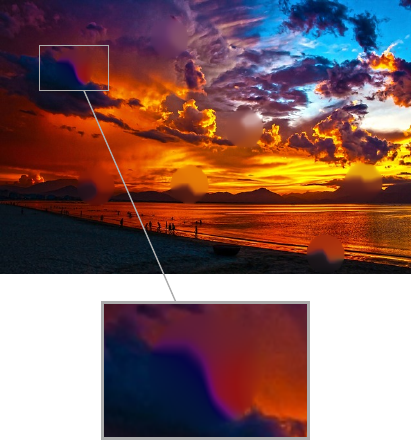}
		\caption{ HSV model.}\label{subfig:beach:hsv}
	\end{subfigure}
	\begin{subfigure}{.230\textwidth}\centering
		\includegraphics[width=\textwidth]{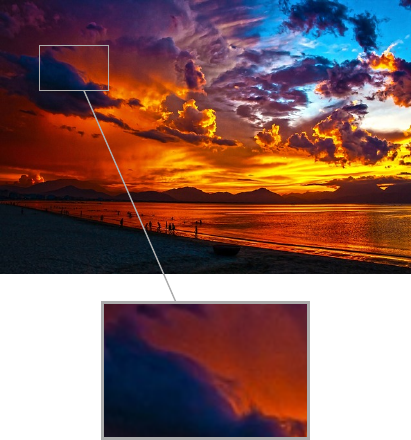}
		\caption{Original image.}\label{subfig:beach:orig}
	\end{subfigure}
	\caption{Inpainting the real world image ``beach'': The HSV based model better reconstructs the clouds than an RGB based approach.}\label{fig:beach}
\end{figure*}
%
\subsection{Denoising sections in volumetric cyclic data} 
\label{sec:denoisingVolumes}

Finally, we apply our algorithms for denoising frames in volumetric data.
Examples of volumetric data are frames of a 2D film or a stack of slices,
each slice being a 2D image as appearing, e.g., in computed tomography.
We want to denoise such slices incorporating the temporal/spatial information 
stemming from the third dimension, which is, e.g., the temporal neighborhood information 
in a film. 

To be more precise, we consider volumetric data $I_{k}(i,j),$ where 
$I_{k}(i,j)$ is a vector valued pixel at the pixel location $i,j$ in the $k$th
frame/slice.
The setup is rather general, and we can assume $I_{k}(i,j)$ being data from some rather general space $\mathcal M$ -- say a manifold; here we exemplarily consider $\mathcal M=\mathbb{S}^1.$
We take a look at the $l$ neighboring (left and right) frames
$I_{h}, h=k-l,\ldots,k,\ldots,k+l$, around a center frame $I=I_k$ at position $k.$  
With the position $k$, we now associate bivariate data living in $M^{2l+1}$ being the vector of data points at the same position \((i,j)\) in the  neighboring frames. To be precise, we consider the bivariate data $J_k$, with $J_k(i,j) \in M^{2l+1}$ given by 
\( J_k(i,j) = \Bigl( I_{k-h}(i,j)\Bigr)_{h=-l}^{l}.
\)
We apply our algorithms to the derived data $J_k$ and compare the result to the usual denoising of the single frame $I_k$.

The video underlying Figure~\ref{fig:video_vect} is constructed as follows.
As basis, we use the image given by the first component
of~\eqref{eq:Formula4SynthImage} on \([-\tfrac{1}{2},\tfrac{1}{2}]^2\); outside the disc of radius \(\tfrac{1}{2}\), we add \(\tfrac{\pi}{4}\) which is the same as rotating the input \((i,j)\) of each pixel in \(\Omega_0\) by the same amount clockwise. The video consists of \(13\) frames rotating the disc clockwise by \(\pi\) and the outer region by \(\pi\) counterclockwise, i.e.\ from \(-\tfrac{\pi}{2}\) before the center frame to \(\tfrac{\pi}{2}\) at the end of the sequence for the inner and with changed signs for the outer redgion. This corresponds to a rotation of \(\tfrac{\pi}{12}\) per frame and region. We finally sample each of these frames with \(256\times 256\) pixel on \([-\tfrac{1}{2},\tfrac{1}{2}]^2\).

In our example, Figure~\ref{fig:video_vect}, we show in (d) the seventh frame
of the constructed video from the previous the paragraph.
On each frame, we impose wrapped Gaussian noise with standard deviation \(\sigma = \frac{2}{5}\), see Figure~\ref{fig:video_vect}\,(a) for frame 7 of the video. In~(b), we perform denoising just on the frame \(k=7\). Performing a first and second order denoising yields staircasing and/or reduction of the sharp edge at the disc border. Choosing the parameters \(\alpha=\alpha_1=\alpha_2,\beta=\beta_1=\beta_2\) from the set \(\in\tfrac{1}{64}\mathbb N_0\) and setting \(K=400\) as maximal number of iterations of the CPPA, we obtain the optimal value \(\alpha=\tfrac{1}{32}\), \(\beta=0\). This indicates that staircasing still resembles a better result than unsharpening the edge, which would be the effect of including second order differences.
In (c), we perform a combined vectorial denoising in \((\mathbb S^1)^{13}\)
as proposed for a total of \(13\) frames $(l=6, k=7)$ and show the central seventh frame. 
At the cost of being computationally more expensive due to the increased data set, this approach outperforms the first, single frame based approach.
\begin{figure*}[tbp]\centering
	\includegraphics[scale=.9]{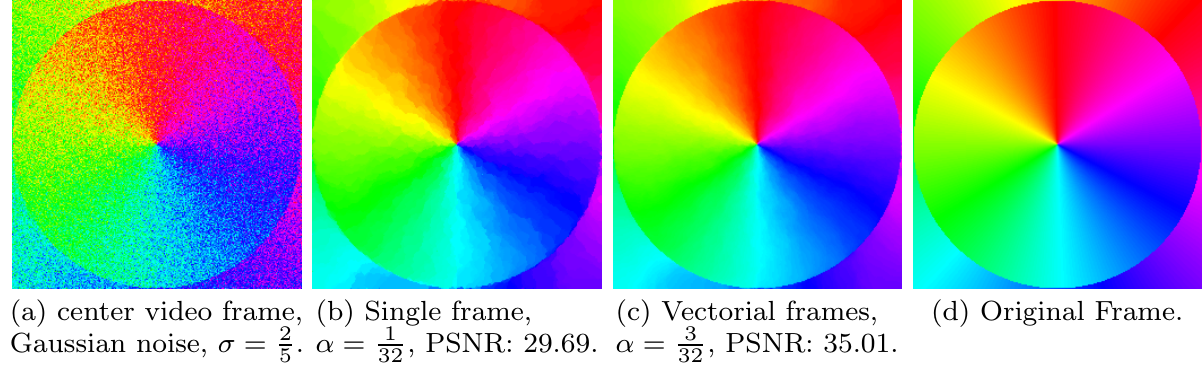}
	\caption{Denoising $\mathbb{S}^1$-valued video frames. The proposed approach for denoising $\mathbb{S}^1$-valued video frames (c) yields significantly better results than  frame-wise denoising (b).}
	\label{fig:video_vect}
\end{figure*}
\section*{Discussion and Future Research}

In the following we discuss the relation of the present work to the author's previous work~\cite{bergmann2014second,BW15}. These works consider the $\mathbb S^1$-valued situation, whereas we here consider the product space $(\mathbb S^1)^m \times \mathbb R^n.$ 
These product spaces are practically relevant: they appear, e.g., in the context of color space and, more abstractly, whenever considering polar coordinates (magnitude and phase).
In contrast to general manifolds the spaces considered here still allow for a solution to the considered minimization problem, i.e.\ the involved proximal mappings, to be given in explicit form.
We are convinced that this is no longer possible for general manifolds. The product spaces further imply changes in both the algorithms and the convergence analysis.
Although this paper as well as~\cite{bergmann2014second,BW15} (and also the previous work \cite{WDS2013}) use a CPPA, the derived proximal mappings are different. In particular the mappings derived here do not arise as component-wise application of the one dimensional situation. This is especially not the case for the setting, where several data items are fixed by constraint.
In this context, we again mention that the natural second differences on $(\mathbb S^1)^m \times \mathbb R^n$ which we defined in this paper couple the components of the range space, which also affects the convergence analysis
The employed methods are based on the ones used in~\cite{bergmann2014second} (which themselves are based on~\cite{WallnerDynCAGD}), but the concrete analysis involves an additional degree of complexity 
stemming from the mentioned coupling.

This work also contains material which is even new for $\mathbb S^1$-valued data.
First, in contrast to pure denoising, additional/different proximal mappings are needed when dealing with the inpainting problem due to the additional constraints.
In~\cite{BW15}, we simply used the proximal mappings computed
in~\cite{bergmann2014second} and
applied projections to ensure the constraints.
Here, we compute the proximal mappings of the constrained problems explicitly. 
This is more natural and also advantageous in the analysis.
Concerning the analysis, we here include an inpainting setup. 
Even for the $\mathbb S^1$ setting, this has not yet been done. 
In the conference proceeding~\cite{BW15} we do not provide a convergence analysis.
The paper~\cite{bergmann2014second} which provides an analysis considers functionals for denoising in the $\mathbb S^1$ setting only.

A topic of future research are algorithms for higher order TV-type functionals for data living in more general manifolds.

\paragraph{Acknowledgement.}
	This research was started when RB visited the 
	Helmholtz-Zentrum M\"unchen in summer 2014.
	We thank Gabriele Steidl for valuable discussions.
	AW is supported by the Helmholtz Association within the young
	investigator group VH-NG-526. AW also acknowledges the support
	by the DFG scientific network ``Mathematical Methods in Magnetic Particle Imaging''.
%
\renewcommand*{\bibfont}{\footnotesize}
\setlength\bibitemsep{0pt}
\printbibliography
%
%
\end{document}